\documentclass[11pt,a4paper]{article}
\setlength{\topmargin}{0.0in}
\setlength{\oddsidemargin}{0.33in}
\setlength{\textheight}{9.0in}
\setlength{\textwidth}{6.0in}

\newcommand{\vectt}[1]{\boldsymbol{\mathbf{#1}}}

\newcommand{\dx}{\mathrm{d}x}
\newcommand{\dy}{\mathrm{d}y}
\newcommand{\dr}{\mathrm{d}r}

\newcommand{\im}{\mathrm{i}}
\newcommand{\E}{\mathrm{e}}
\newcommand{\fdx}{\frac{\mathrm{d}}{\dx}}

\def\XXint#1#2#3{{\setbox0=\hbox{$#1{#2#3}{\int}$}
\vcenter{\hbox{$#2#3$}}\kern-.5\wd0}}

\baselineskip=20pt plus1pt

\usepackage{color}
\definecolor{deepblue}{rgb}{0,0,0.5}
\definecolor{deepred}{rgb}{0.6,0,0}
\definecolor{deepgreen}{rgb}{0,0.5,0}

\usepackage{amsmath,amssymb}
\usepackage{bm}
\usepackage{amsthm}
\usepackage{anyfontsize}
\usepackage{url}
\usepackage{graphicx}
\usepackage{array}
\usepackage{varwidth}
\usepackage{geometry}
\usepackage{ esint }
\definecolor{ao(english)}{rgb}{0.0, 0.5, 0.0}
\usepackage{listings}
\usepackage{fancyhdr}
\usepackage{enumitem}
\usepackage[
    backend=bibtex,
   bibstyle=ieee,
    citestyle=numeric-comp,
    natbib=true,
    sortlocale=en_GB,
    sorting = nyt,
    url=true,
    doi=true,
    arxiv=true,
    eprint=true
]{biblatex}
\DeclareFieldFormat{eprint:arXiv}{%
  arXiv preprint: \href{https://arxiv.org/abs/#1}{#1 \texttt{[\printfield{eprintclass}]}}%
}
\setcounter{biburllcpenalty}{7000}
\setcounter{biburlucpenalty}{8000}

\usepackage[font=footnotesize]{caption}
\usepackage{mathtools}
\usepackage{subfig}
\usepackage{appendix}
\usepackage{standalone}
\usepackage{tikz}
\usetikzlibrary{trees}
\usepackage{hyperref}
\usepackage{xcolor}
\hypersetup{
    colorlinks,
    linkcolor={blue!50!black},
    citecolor={blue!50!black},
    urlcolor={blue!80!black}
}

\usepackage{todonotes}

\RequirePackage{algorithm}[0.1]

\RequirePackage[capitalize,nameinlink]{cleveref}[0.19]
\crefformat{equation}{\textup{#2(#1)#3}}
\crefrangeformat{equation}{\textup{#3(#1)#4--#5(#2)#6}}
\crefmultiformat{equation}{\textup{#2(#1)#3}}{ and \textup{#2(#1)#3}}
{, \textup{#2(#1)#3}}{, and \textup{#2(#1)#3}}
\crefrangemultiformat{equation}{\textup{#3(#1)#4--#5(#2)#6}}%
{ and \textup{#3(#1)#4--#5(#2)#6}}{, \textup{#3(#1)#4--#5(#2)#6}}{, and \textup{#3(#1)#4--#5(#2)#6}}

\Crefformat{equation}{#2Equation~\textup{(#1)}#3}
\Crefrangeformat{equation}{Equations~\textup{#3(#1)#4--#5(#2)#6}}
\Crefmultiformat{equation}{Equations~\textup{#2(#1)#3}}{ and \textup{#2(#1)#3}}
{, \textup{#2(#1)#3}}{, and \textup{#2(#1)#3}}
\Crefrangemultiformat{equation}{Equations~\textup{#3(#1)#4--#5(#2)#6}}%
{ and \textup{#3(#1)#4--#5(#2)#6}}{, \textup{#3(#1)#4--#5(#2)#6}}{, and \textup{#3(#1)#4--#5(#2)#6}}

\newmuskip\pFqmuskip

\newcommand*\pFq[6][8]{%
  \begingroup 
  \pFqmuskip=#1mu\relax
  \mathchardef\normalcomma=\mathcode`,
  \mathcode`\,=\string"8000
  \begingroup\lccode`\~=`\,
  \lowercase{\endgroup\let~}\pFqcomma
  {}_{#2}F_{#3}{\left(\genfrac..{0pt}{}{#4}{#5};#6\right)}%
  \endgroup
}
\newcommand{\pFqcomma}{{\normalcomma}\mskip\pFqmuskip}

\usepackage{algpseudocode}
\algnewcommand{\Initialize}[1]{
  \State \textbf{Initialize:}
  \Statex \hspace*{\algorithmicindent}\parbox[t]{.8\linewidth}{\raggedright #1}
}
\algnewcommand{\Indent}[2]{
  \State {#1}
  \vspace{-2mm}
  \Statex \hspace*{\algorithmicindent}\parbox[t]{.9\linewidth}{\raggedright #2}
}

\addbibresource{references.bib}

\newtheorem{proposition}{Proposition}[section]
\newtheorem{theorem}{Theorem}[section]
\newtheorem{lemma}{Lemma}[section]
\newtheorem{definition}{Definition}[section]
\newtheorem{corollary}{Corollary}[section]
\newtheorem{remark}{Remark}[section]
\setcounter{tocdepth}{2}%
\numberwithin{equation}{section}

\title{A sparse hierarchical $hp$-finite element method \\ on disks and annuli}

\author{Ioannis P.~A.~Papadopoulos\thanks{\scriptsize Weierstrass Institute for Applied Analysis and Stochastics, Berlin, DE, \tt{papadopoulos@wias-berlin.de};}
\and Sheehan Olver\thanks{\scriptsize Department of Mathematics, Imperial College London, UK, {\tt s.olver@imperial.ac.uk}.}}

\begin{document}

\maketitle
\date
\thispagestyle{empty}
\pagestyle{fancy}
\lhead{I.~P.~A.~Papadopoulos, S.~Olver}

\begin{abstract}
We develop a sparse hierarchical $hp$-finite element method ($hp$-FEM) for the Helmholtz equation with variable coefficients posed on a two-dimensional disk or annulus. The mesh is an inner disk cell (omitted if on an annulus domain) and concentric annuli cells.  The discretization preserves the Fourier mode decoupling of rotationally invariant operators, such as the Laplacian, which manifests as block diagonal mass and stiffness matrices. Moreover, the matrices have a sparsity pattern independent of the order of the discretization and admit an optimal complexity factorization.  The sparse $hp$-FEM can handle radial discontinuities in the right-hand side and in rotationally invariant Helmholtz coefficients.  Rotationally anisotropic coefficients that are approximated by low-degree polynomials in Cartesian coordinates also result in sparse linear systems.  We consider examples such as a high-frequency Helmholtz equation with radial discontinuities  and rotationally anisotropic coefficients,  singular source terms,  the time-dependent Schr\"odinger equation, and an extension to a three-dimensional cylinder domain, with a quasi-optimal solve, via the Alternating Direction Implicit (ADI) algorithm.
\end{abstract}

\section{Introduction}
\label{sec:introduction}

In this work, we develop a sparse hierarchical $hp$-finite element method ($hp$-FEM) on disks and annuli. The cells in the mesh are stacked concentric annuli where, if the domain is a disk, the innermost cell is a disk.  The FEM basis consists of Zernike and Zernike annular polynomials, multivariate orthogonal polynomials in Cartesian coordinates, that are defined on disks and annuli, respectively.  The stiffness (weak Laplacian) and mass matrices are sparse and banded irrespective of the polynomial order truncation and the number of cells in the mesh considered. Moreover, for rotationally invariant operators, such as the (weak) Laplacian, the induced matrices are block diagonal where the submatrices correspond to the Fourier mode decoupling. Thus the solve reduces to parallelizable sparse one-dimensional solves for each Fourier mode. In particular, each submatrix along the block diagonal in the Helmholtz operator may be factorized in optimal  $\mathcal{O}(N_h N_p)$  complexity. Here $N_h$ denotes the number of cells in the mesh and $N_p$ is the highest degree\footnote{FEM and spectral method literature denote the degree by $p$ and $n$, respectively. We use both conventions in this work, utilizing the notation that is most natural in the context considered.} of the polynomial basis. Thus we achieve an $\mathcal{O}(N_h N_p^2)$ optimal complexity solve in two dimensions. After an initial ``arrowhead'' of size $N_h \times N_h$, the local stiffness and mass submatrices on the block diagonal contain three and five nonzero diagonals, respectively, for increasing degree $N_p$ with minimal coupling across elements. The global stiffness and mass matrices are also block diagonal and the submatrices have a so-called Banded-Block-Banded ($B^3$) Arrowhead matrix structure with block-bandwidths $(1,1)$ and $(2,2)$, respectively, and a sub-block-bandwidth of $1$ \cite[Def.~4.1]{Olver2023}. The mesh and spy plots of the global mass and stiffness submatrices are given in \cref{fig:spy-plots}. By considering the tensor-product space of the basis on the disk with a univariate sparse continuous $hp$-FEM basis for the interval \cite{Olver2023}, one obtains an FEM basis for the cylinder that is highly effective at handling discontinuities in the radial and $z$-directions as exemplified in \cref{fig:adi-plots2}. By leveraging the Alternating Direction Implicit (ADI) algorithm \cite{Fortunato2020}, we show one obtains a quasi-optimal $\mathcal{O}(N_h N_p^3 \log (N_h N_p))$ complexity solve  for the screened Poisson equation.

\begin{figure}[h!]
\centering
\includegraphics[width =0.25 \textwidth]{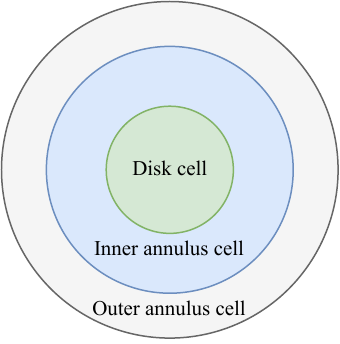} 
\includegraphics[width =0.35 \textwidth]{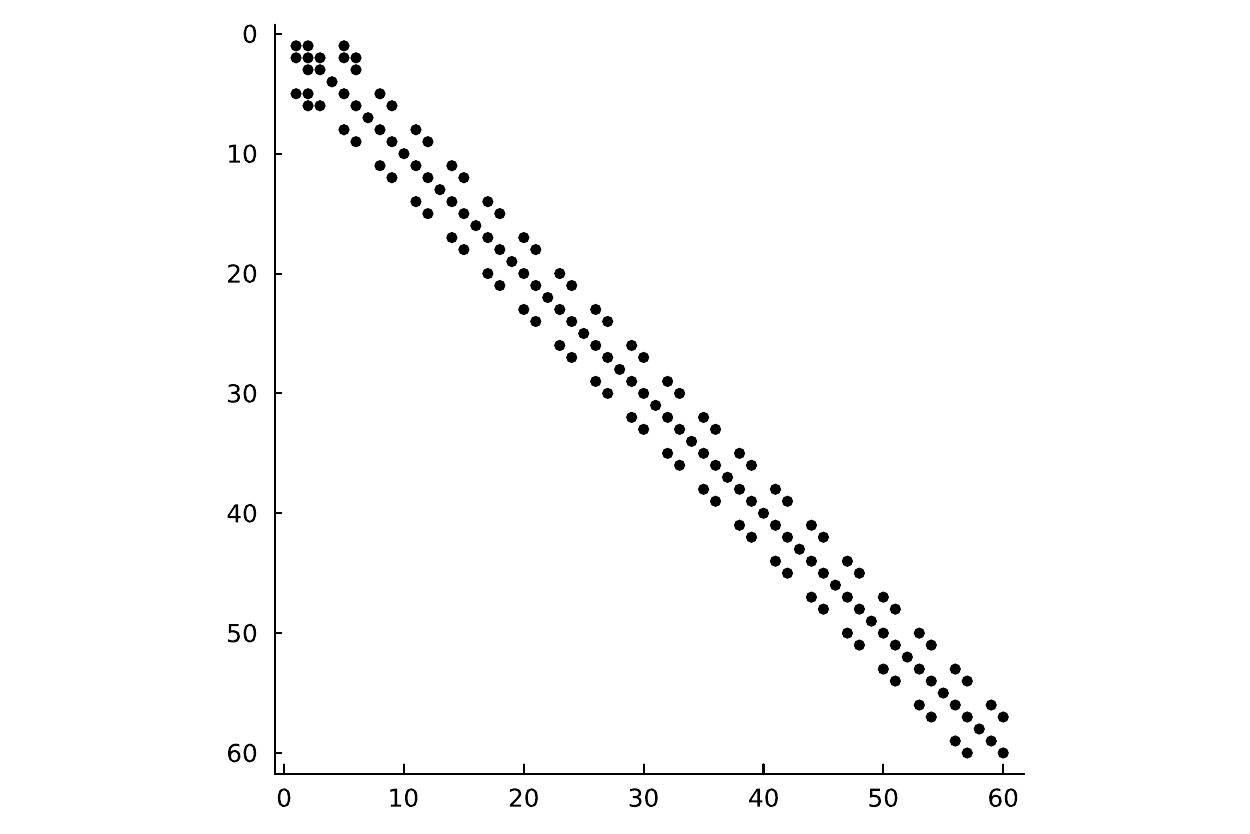} 
\includegraphics[width =0.35 \textwidth]{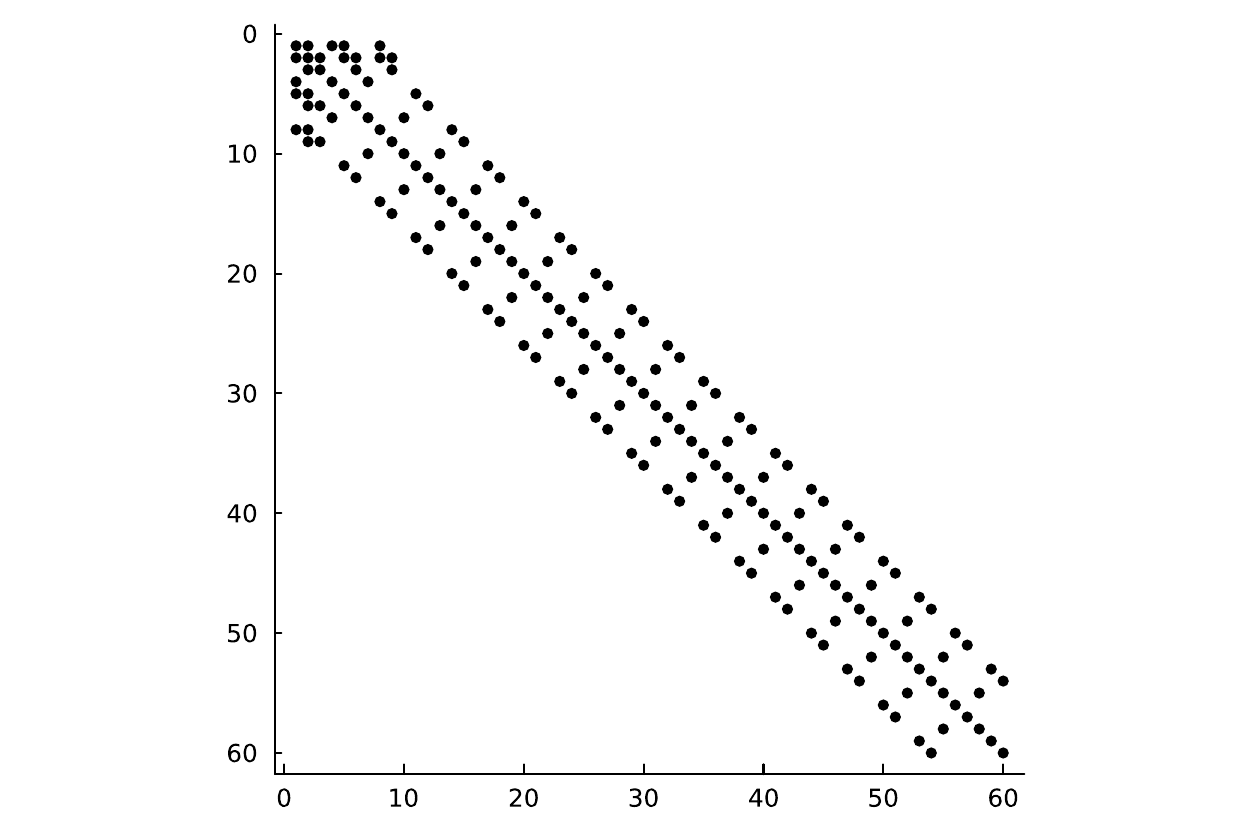} 
\caption{Meshing a disk domain into 3 cells, an inner disk, and two concentric annuli (left). Spy plots of the first block diagonal submatrix (corresponding to the first Fourier mode) in the stiffness (middle) and mass (right) matrices when truncating the hierarchical basis at degree $N_p = 20$ on each cell in the displayed mesh. The matrices are sparse and banded with a bandwidth independent of $N_p$.}
\label{fig:spy-plots}
\end{figure}

\begin{figure}[h!]
\centering
\includegraphics[width =0.28 \textwidth]{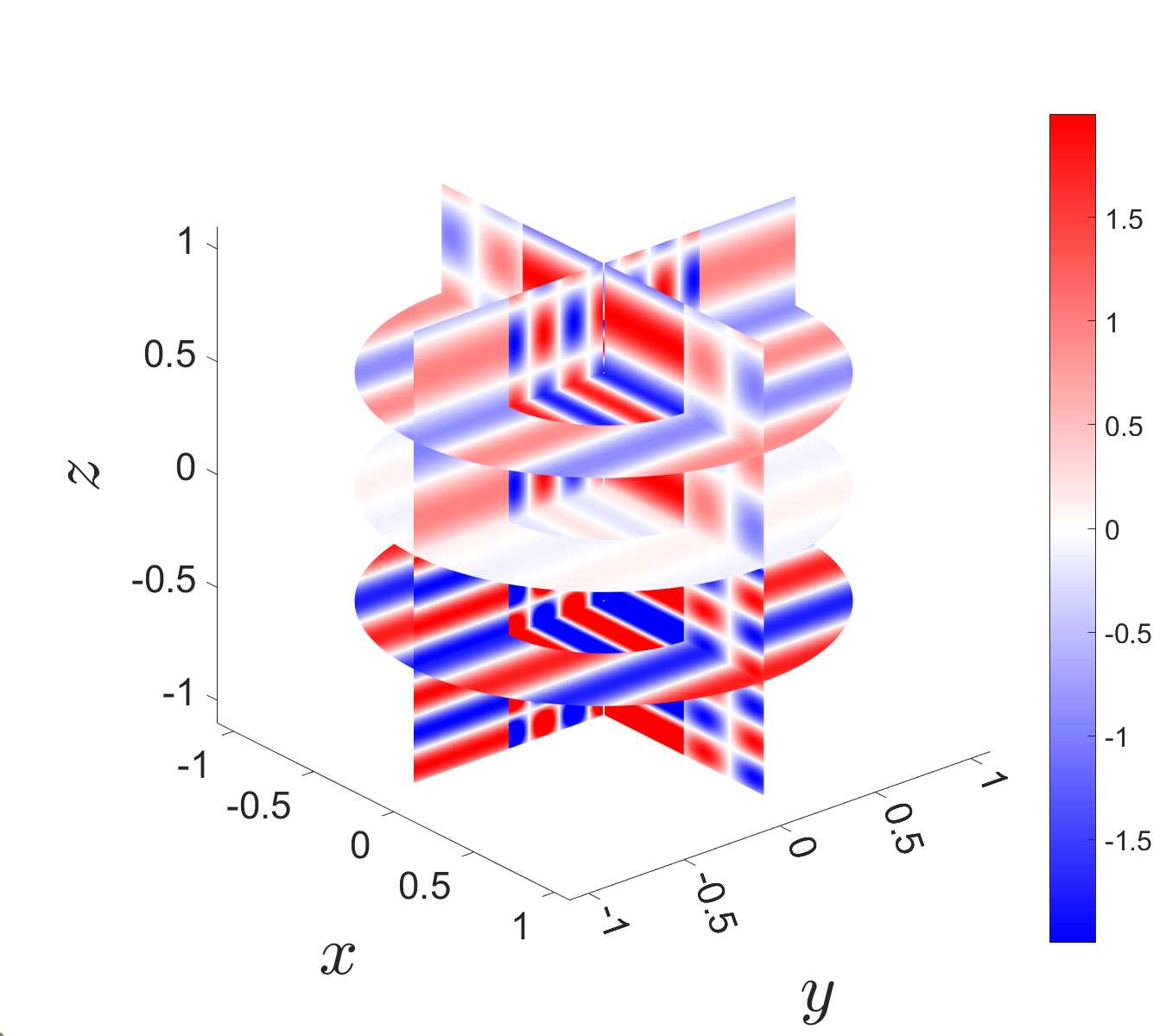}
\includegraphics[width =0.32 \textwidth]{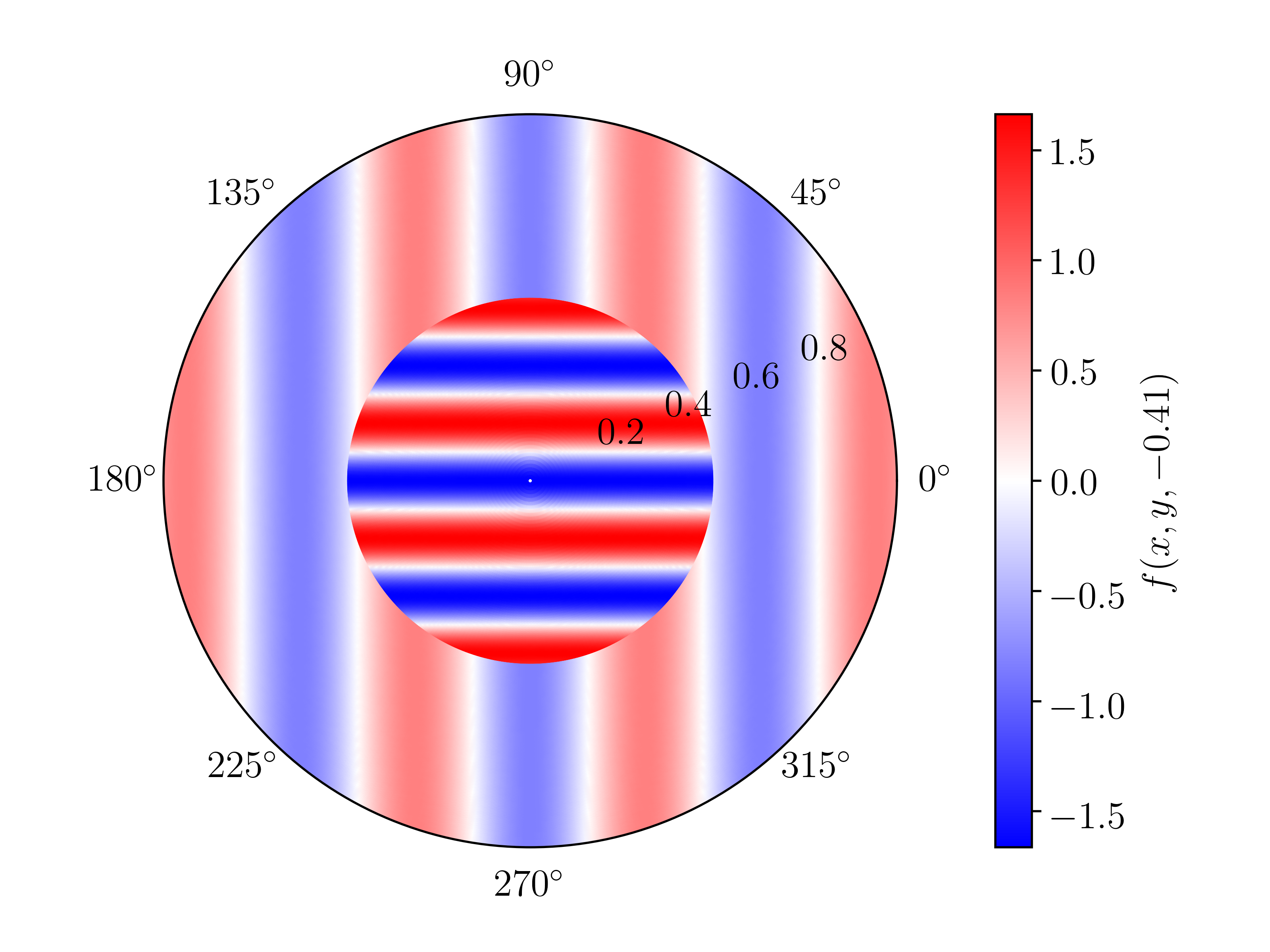}
\includegraphics[width =0.32 \textwidth]{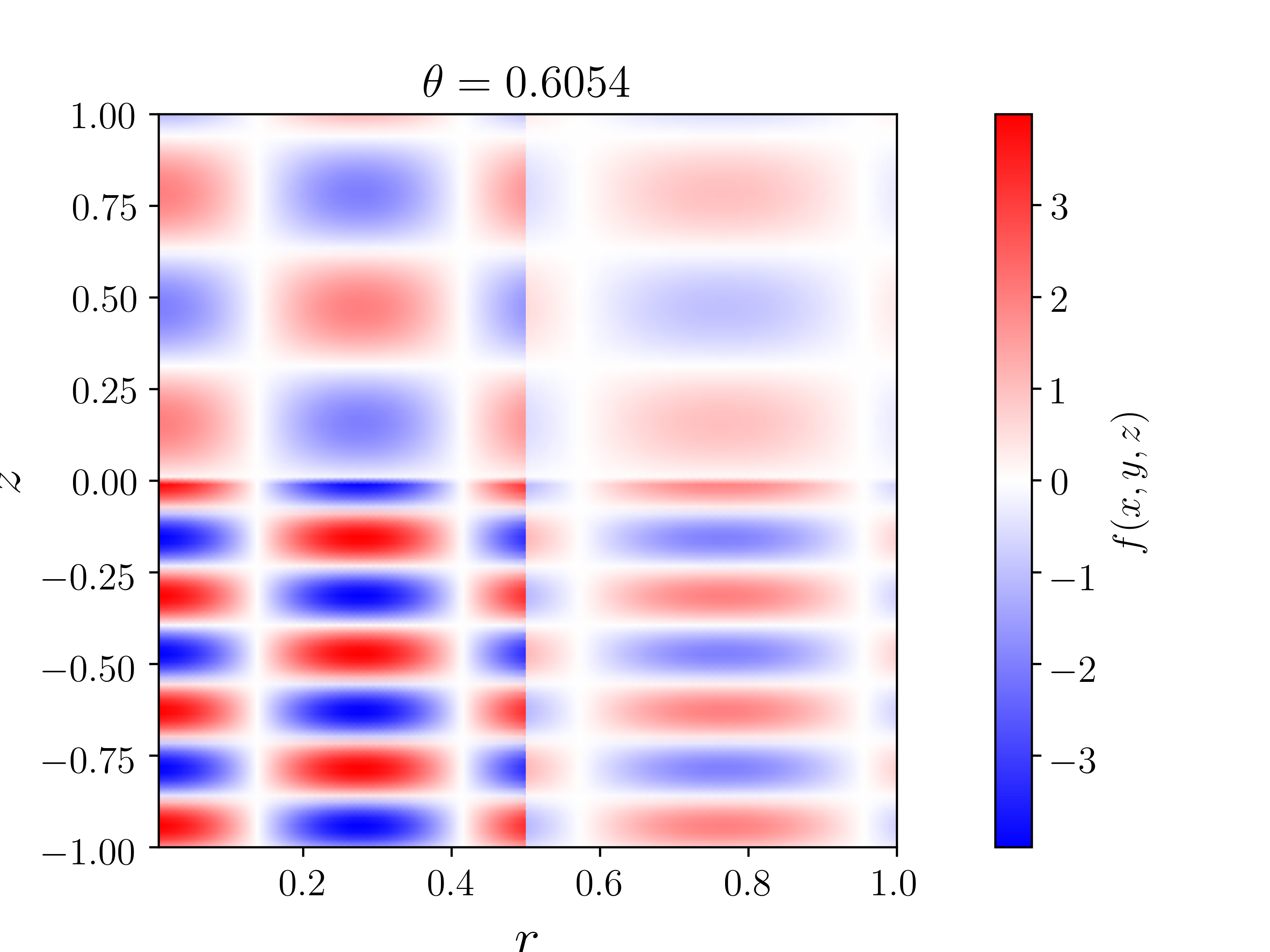}\\
\includegraphics[width =0.28 \textwidth]{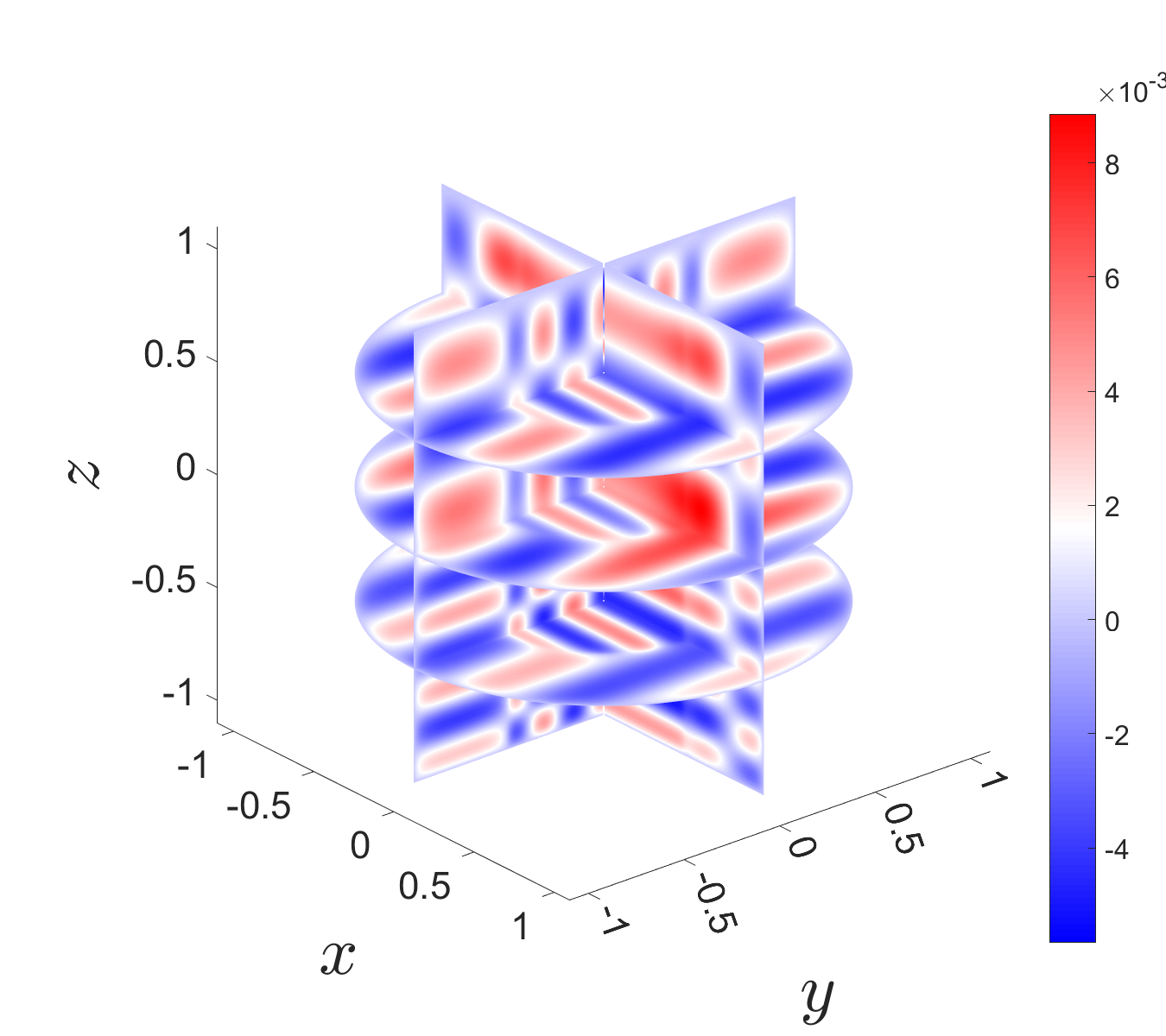}
\includegraphics[width =0.32 \textwidth]{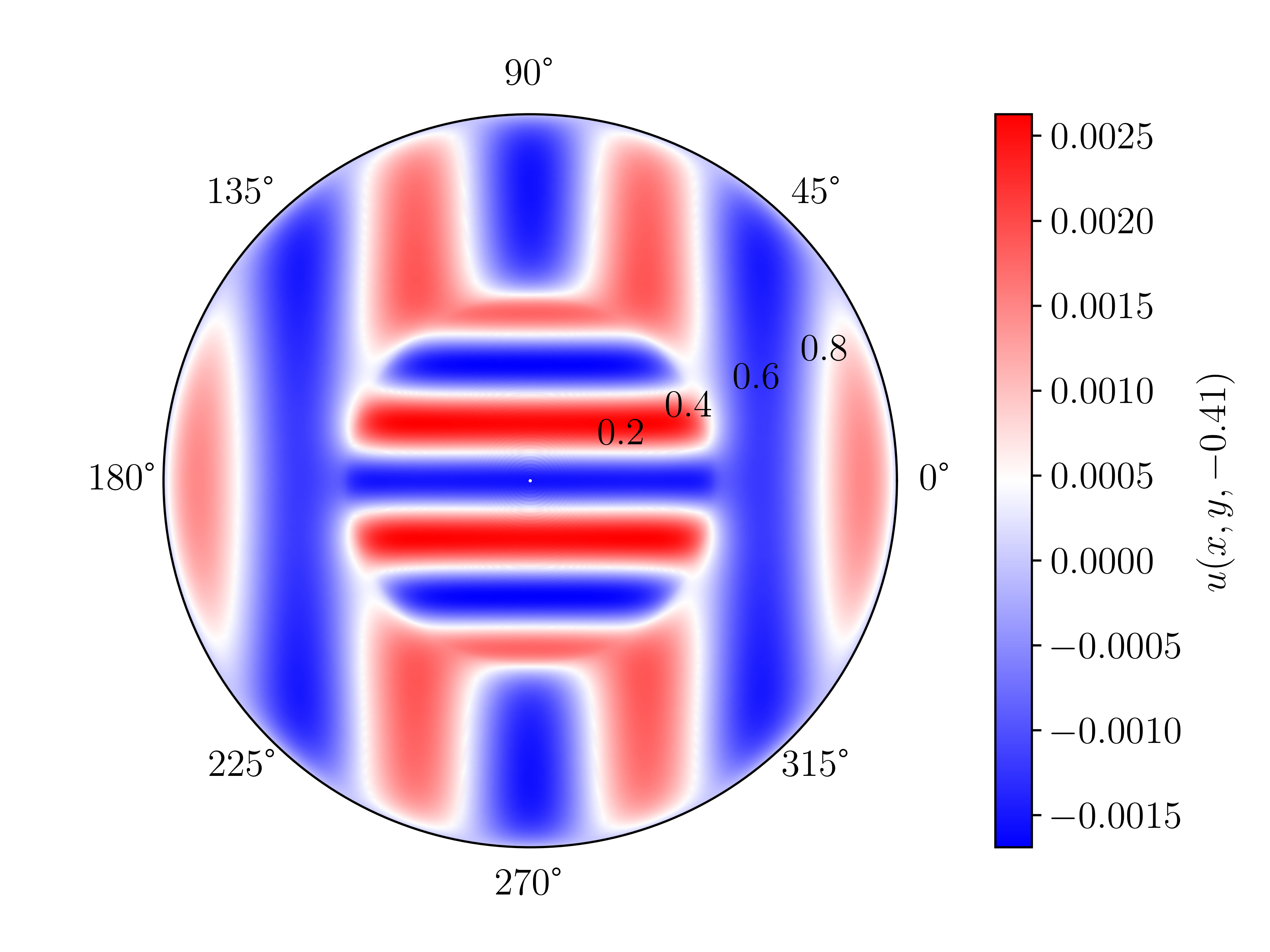}
\includegraphics[width =0.32 \textwidth]{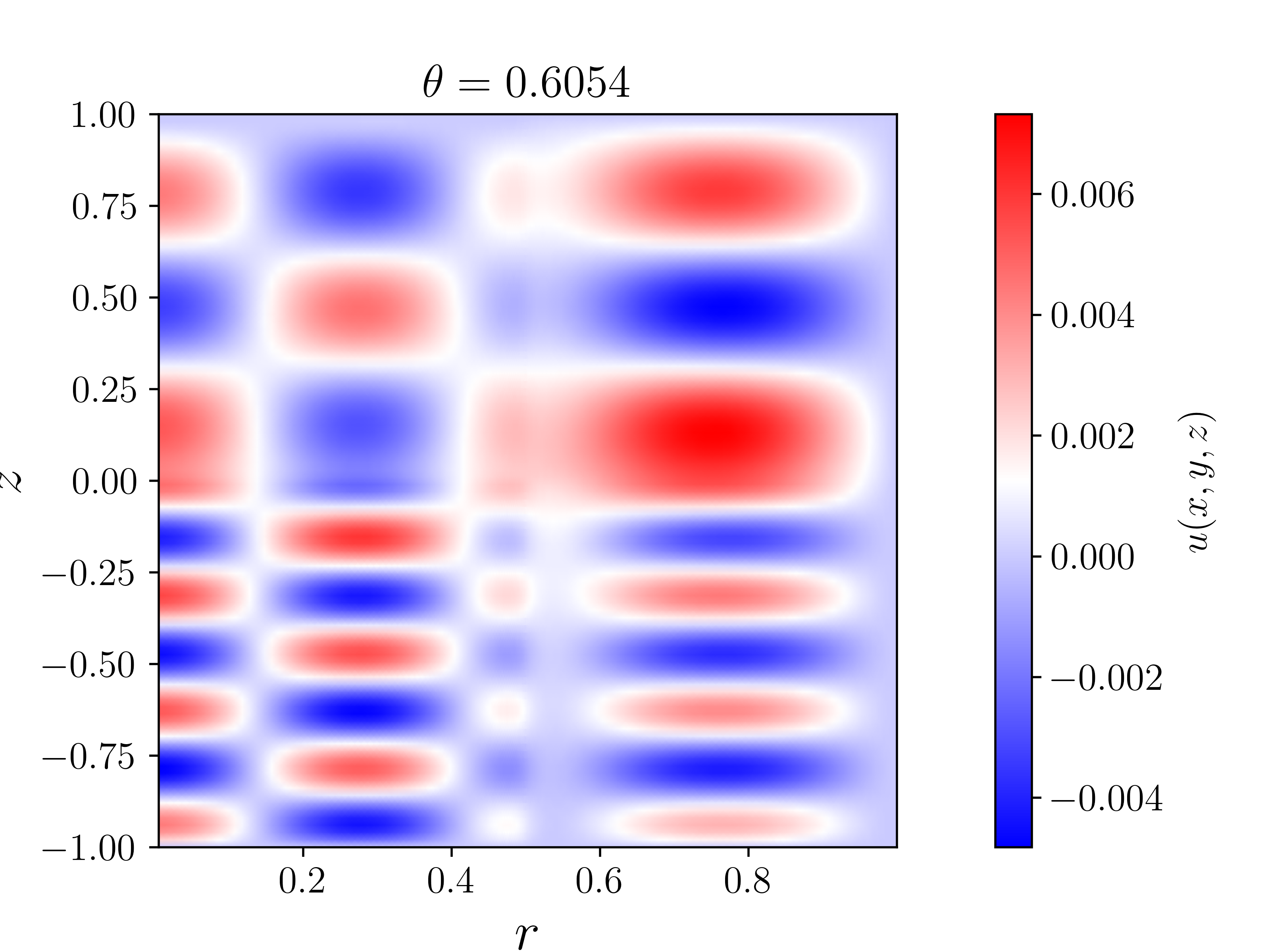}
\caption{Plots of the right-hand side, $f(x,y,z)$ (top row) and the solution $u(x,y,z)$ (bottom row) in the screened 3D Poisson equation of \cref{sec:examples:ADI} with the right-hand side and Helmholtz coefficient as given in \cref{eq:adi:6}. The first column is a visualization on the 3D domain and the second and third columns are 2D slices through the $(x,y)$ and $z$-planes, respectively. Note the discontinuities in $f(x,y,z)$ at $r=1/2$ and $z=0$.}
\label{fig:adi-plots2}
\end{figure}

Studies on high-order FEM and spectral element methods (SEM) is an extremely active area of research \cite{Babuska1981a, Babuska1981b, Fortunato2021, Burns2020, Brubeck2022}. High-order methods typically lead to fast convergence to the true solution, stabilize discretizations, and avoid pitfalls associated with low-order methods, e.g.~locking in linear elasticity \cite{Ainsworth2022}. The computational bottleneck is almost always the loss of sparsity, assembly costs, and the ill-conditioning of the matrices that arise after discretization of the equation operators. For many classical FEMs, attempting to assemble the induced mass matrix on a single three-dimensional element with a truncation degree of $N_p=30$ will surpass the working memory of a standard desktop. One remedy is the use of matrix-free Krylov methods that only require the action of the discretized operators on vectors. However, since the mass and stiffness matrices are often ill-conditioned, a good preconditioner is required in order for the Krylov methods to converge in a reasonable number of iterations \cite{Brubeck2022}. 

An alternative to using a classical basis, and developing a preconditioner for each problem, is to develop a basis that promotes sparsity in the discretized problem even for a high polynomial order and number of elements. The advantage is that the matrices may be explicitly assembled and a direct solver employed for fast convergence. \emph{Sparse} spectral methods may be traced back to the   integral reformulation method of Clenshaw, which uses recurrence relationships for integration of Chebyshev polynomials \cite{clenshaw1957numerical}. In 2013, the ultraspherical spectral method was introduced for general ODEs \cite{Olver2013} and recently the ultraspherical spectral element method was also developed \cite{Fortunato2021}. Using the ideas of the ultraspherical method as a base, sparse spectral methods were constructed for two-dimensional domains e.g.~triangles \cite{Olver2019}, disks/balls \cite{Boyd2011, VasilDisk, Wilber2017,Boulle2020,Meyer2021,Li2014,Atkinson2019,Ellison2022}, annuli \cite{Mahajan1981, Molina2020}, disk slices and trapeziums \cite{Snowball2020}, and spherical caps \cite{Snowball2021}. The cited works mostly focus on discretizing the strong formulation of the partial differential equation (PDE) which has the disadvantage of not preserving the symmetry of the operators (such as the identity). However, we note that some remedies exist \cite{Aurentz2020}.

Utilizing orthogonal polynomials to construct high-order sparse finite element methods stems back to the analysis of the one-dimensional hierarchical $p$-FEM  basis first attributed to Szab\'o and Babu{\v{s}}ka \cite{babuvska1983lecture}, see also \cite[Ch.~2.5.2]{szabo2011introduction} and \cite[Ch.~3.1]{Schwab1998}. The mass and stiffness matrices induced by this basis have a special sparsity structure recently coined as a $B^3$-Arrowhead matrix structure \cite[Def.~4.1]{Olver2023}. In \cite[Cor.~4.3]{Olver2023}, it was shown that $B^3$-Arrowhead matrices permit an optimal complexity $\mathcal{O}(N_h N_p)$ reverse Cholesky factorization, ultimately leading to a quasi-optimal $\mathcal{O}(N_h N_p \log^2 N_p)$ complexity solve for the 1D Poisson equation: from the expansion of the right-hand side to the evaluation of the solution on a grid. Note that other efficient solvers exist, e.g.~via static condensation \cite[Ch.~3.2]{Schwab1998}. 

Extensions to two-dimensional quadrilateral finite elements may be achieved via a tensor-product space of the 1D hierarchical $p$-FEM basis or by constructing a serendipity element, cf.~\cite{Babuska1991} and \cite[Ch.~4.4]{Schwab1998}. Recently it was shown that the tensor-product space admits, via the ADI algorithm \cite{Fortunato2020}, a quasi-optimal $\mathcal{O}(N_h N_p^2 \log^2 N_p)$ solve for the Poisson equation on a quadrilateral domain \cite{Olver2023}. Extensions to two-dimensional simplex finite elements have also been considered, e.g.~by Babu{\v{s}}ka et al.~\cite{Babuska1991} as well as Beuchler and Sch\"oberl \cite{Beuchler2006}. Other works of a similar theme include \cite{Beuchler2007, Beuchler2012, Jia2022, Beuchler2012b, Dubiner1991, Karniadakis2005} and \cite[App.~A]{Snowball2020}. The FEM basis constructed in this work may be thought of as an extension of these other hierarchical bases to the disk. 



The choice of the mesh in this work caters towards solving high-frequency Helmholtz equations with radial discontinuities in the Helmholtz coefficient $\lambda$ and the right-hand side $f$. We consider such an example in \cref{sec:examples:high-frequency}. Unlike spectral method discretizations of the strong form, this approach preserves symmetry and positive-definiteness. This makes it suitable for a unitary preserving discretization of the time-dependent Schr\"odinger  equation via an exponential integrator as considered in \cref{sec:examples:schrodinger}.  We show that the method can tackle rotationally anisotropic coefficients in \cref{sec:examples:non-separable} and singular source terms via $hp$-refinement in \cref{sec:examples:hp-refinement}.  By considering the tensor-product with a univariate basis, the hierarchical basis extends to three-dimensional cylinders. By utilizing the ADI algorithm \cite{Fortunato2020}, the three-dimensional solve has $\mathcal{O}(N_h N_p^3 \log (N_h^{1/4} N_p))$ quasi-optimal complexity as discussed in \cref{sec:ADI}.

\section{Mathematical setup}

Let $\Omega \subset \mathbb{R}^d$, $d \in \{2,3\}$, denote an open, bounded, and connected domain with a Lipschitz boundary. In this work $\Omega$ is a disk, an annulus, or a cylinder.  For $0 < a < b$, we denote a disk and annulus domain, respectively, as
\begin{align}
\Omega_{0, a} &\coloneqq \{ (x,y) \in \mathbb{R}^2 : \|(x,y)\|_{2} <a\},\\
\Omega_{a, b} &\coloneqq \{ (x,y) \in \mathbb{R}^2 : a < \|(x,y)\|_{2} <b\},
\end{align}
where $\|\cdot\|_{2}$ denotes the Euclidean norm. We use $\Omega_0$ to denote the unit disk $\Omega_0 \coloneqq \Omega_{0,1}$  and $\Omega_\rho \coloneqq \Omega_{\rho,1}$, $0 < \rho < 1$, for an annulus with outer radius one.

Let $W^{s,p}(\Omega)$ denote the family of Sobolev spaces \cite{Adams2003} and $H^{s}(\Omega) \coloneqq W^{s,2}(\Omega)$, $s > 0$. We denote the Lebesgue space by $L^p(\Omega)$, $p \in [1,\infty]$. $H^1_0(\Omega)$ denotes the space of functions that live in $H^1(\Omega)$ that have a boundary trace of zero \cite{Gagliardo1957}. Moreover, let $H^{-1}(\Omega)$ denote the dual space of $H^1_0(\Omega)$. If $X$ is a Banach space and $H$ is a Hilbert space, then $\langle \cdot, \cdot \rangle_{X^*,X}$ denotes the duality pairing between a function in $X$ and a functional in the dual space $X^*$ and $\langle \cdot, \cdot\rangle_H$ denotes the inner product in $H$.

Although we consider more complex equations in \cref{sec:examples}, the canonical equation that exemplifies the core principles of the $hp$-FEM is the Helmholtz equation with a variable coefficient. The Helmholtz equation seeks a $u \in H^1_0(\Omega)$ that satisfies, for a given $\lambda \in L^2(\Omega)$ and $f \in H^{-1}(\Omega)$:
\begin{align}
\langle \nabla u, \nabla v\rangle_{L^2(\Omega)} + \langle \lambda u,  v\rangle_{L^2(\Omega)} = \langle f, v \rangle_{H^{-1}(\Omega), H^1_0(\Omega)} \;\; \text{for all}\;\; v \in H^1_0(\Omega). \label{eq:helmholtz}
\end{align}
If $\lambda \geq 0$ a.e.~in $\Omega$, then the existence and uniqueness of $u$ follows as a direct consequence of the Lax--Milgram theorem \cite{Evans2010}. In such a regime, the equation is coercive and we hereby refer to this case as the \emph{screened Poisson equation}. In contrast large negative choices of $\lambda < 0$ induce oscillations in the solution which are traditionally hard to resolve with low-order numerical methods.

We now rewrite \cref{eq:helmholtz} in quasimatrix notation \cite{SOActa}.  Let $\Phi = \{ \phi_i\}_{i=0}^\infty$ denote the set of continuous piecewise polynomials that form the hierarchical basis for $H^1_0(\Omega)$. Then the quasimatrix ${\bm \Phi}$ is defined to be a row vector where each entry is a basis function, i.e.
\begin{align}
{\bm \Phi}(x,y) \coloneqq \left( \phi_0(x,y) \;\; \phi_1(x,y) \;\; \phi_2(x,y) \;\; \dots\right).
\end{align}
Linear operations such as $\fdx$ acting on quasimatrices are understood entry-wise. For any function $u \in H^1(\Omega)$, there exists a column coefficient vector ${\bf u}$ such that $u(x,y) = \sum_{i=0}^\infty {\bf u}_i \phi_i(x,y)= {\bm \Phi}(x,y) {\bf u}$. Throughout this work we expand the right-hand side in a basis of discontinuous piecewise polynomials denoted by $\Psi$. We define the $L^2$-inner product between the two quasimatrices ${\bm \Phi}$ and ${\bm \Psi}$ as follows:
\begin{align}
 \langle{\bm \Phi}^\top, {\bm \Psi}\rangle_{L^2(\Omega)} \coloneqq
 \begin{pmatrix}
\langle \phi_0, \psi_0\rangle_{L^2(\Omega)} & \langle\phi_0, \psi_1\rangle_{L^2(\Omega)}  & \cdots \\
\langle\phi_1, \psi_0\rangle_{L^2(\Omega)} & \langle\phi_1, \psi_1\rangle_{L^2(\Omega)} & \cdots \\
\vdots & \vdots &  \ddots
 \end{pmatrix}.
\end{align} 
Rewrite $u(x,y) = {\bm \Phi}(x,y) {\bf u}$ and consider $f \in L^2(\Omega)$ such that $f(x,y) = {\bm \Psi}(x,y) {\bf f}$. We define the load vector as ${\bf b} 
\coloneqq G_{\Phi, \Psi} {\bf f}$ where $G_{\Phi,\Psi} \coloneqq \langle {\bm \Phi}^\top, {\bm \Psi}\rangle_{L^2(\Omega)}$ is the Gram matrix between $\Phi$ and $\Psi$. We find that the Helmholtz equation \cref{eq:helmholtz} may be rewritten as find ${\bf u}$ that satisfies
\begin{align}
(A + M_\lambda) {\bf u} = {\bf b}
\label{eq:helmholtz-LA}
\end{align}
where the stiffness matrix, $A = \langle (\nabla {\bm \Phi})^\top, \nabla {\bm \Phi}\rangle_{L^2(\Omega)}$, and the weighted mass matrix, $M_\lambda  = \langle {\bm \Phi}^\top, \lambda {\bm \Phi}\rangle_{L^2(\Omega)}$, are symmetric infinite-dimensional matrices. $M_\lambda$ (provided $\lambda \geq 0$ a.e.) and $A$ are symmetric positive-definite. If $\lambda \equiv 1$ then we drop the subscript ${}_\lambda$ and call $M$ the mass matrix. The goal of this work is to choose the hierarchical basis $\Phi$ with spectral approximation properties that promotes sparsity in the stiffness and mass matrices for an annulus and disk domain.

\begin{remark}[Alternative discretizations for disk domains]
Other sparsity preserving discretizations, that decompose the domain, already exist for the disk. A direct extension of the spectral Galerkin method in \cite{shen1997} considers a tensor-product basis of the classical one-dimensional hierarchical $p$-FEM basis in the radial direction \cite{Babuska1991,Olver2023} with a Fourier discretization in the angular direction, i.e.~the basis functions are of the form $P_n(r) \sin(m \theta + j\pi/2)$, $r \in [0,1]$, $\theta \in [0,2\pi)$, $m \in \mathbb{N}$, $j\in\{0,1\}$, where $P_n$ is a piecewise polynomial of degree $n$.  
However, the FEM established in this work has the following advantages:
\begin{enumerate}
\itemsep=0pt
\item Zernike polynomials often represent functions to the same accuracy with half the degrees of freedom when compared to a Chebyshev$\otimes$Fourier expansion \cite[Sec.~6.2]{Boyd2011}.
\item The discretization preserves symmetry of the PDE operator on annular cells unlike in the spectral Galerkin method found in \cite[Sec.~5.2]{shen1997}. Kwan \cite{kwan2009} recovered symmetry but the basis functions are nonstandard. 
\item $A + M_\lambda$ is approximately twice as sparse as that of a $p$-FEM$\otimes$Fourier expansion, with $7$ or less nonzero entries per row when $\lambda$ is piecewise constant.
\item Thanks to the symmetric $B^3$-Arrowhead structure, the $n \times n$ linear systems admit a simple optimal complexity $\mathcal{O}(n)$ factorization via a reverse Cholesky factorization.
\item On disk domains, there have been several studies showing that discretizations of the form $P_n(r) \sin(m \theta + j\pi/2)$, $r \in [0,1]$, $\theta \in [0,2\pi)$, that treat the origin as a boundary point are suboptimal. They suffer from both coordinate singularities and artificially clustered grids at the origin cf.~\cite[Sec.~6]{Boyd2011}, \cite[Sec.~2]{Wilber2017}, \cite[Sec.~1]{VasilDisk}. In turn this leads to worse approximations and numerical instabilities. There are some alternatives, e.g.~using double-wrapped Chebyshev \cite{Wilber2017}, leveraging a quadratic argument \cite{Boyd2011, kwan2009} or, as discussed in this work, one-sided Jacobi polynomials (i.e.~Zernike polynomials) that resolve the origin coordinate singularity issue \cite{Papadopoulos2023, Boyd2011, VasilDisk}.
\end{enumerate}
Although the FEM matrix entries must be numerically computed, as discussed in \cref{rem:quadrature}, the computational cost to find the entries is optimal, scaling as $O(N_h N_p^2)$. Moreover, for coefficients that are not piecewise constant, the entries must also be numerically approximated in the case of a $p$-FEM $\otimes$ Fourier discretization.
\end{remark}

\section{Orthogonal polynomials}

In the previous section, we noted that our goal is to construct an FEM basis that promotes sparsity in the stiffness and mass matrices. In \cref{sec:basis} we show that a suitable basis consists of so-called hat and bubble functions, otherwise known as external and internal shape functions, respectively. In this section we introduce the multivariate orthogonal polynomials that are used to define the hat and bubble functions. 

\subsection{Jacobi and semiclassical Jacobi polynomials}

At its core, the hat and bubble functions consist of scaled-and-shifted (semi)classical Jacobi polynomials multiplied with harmonic polynomials. The Jacobi polynomials $\{ P^{(a,b)}_n(x) \}_{n\in \mathbb{N}_0}$ are a family of complete univariate bases of classical orthonormal polynomials on the interval $(-1,1)$ with basis parameters $a, b \in \mathbb{R}$ such that $a, b >-1$ \cite[Sec.~18.3]{dlmf}. They are orthonormal with respect to the $L^2$-weighted inner product
\begin{align}
\int_{-1}^1  P^{(a,b)}_n(x) P^{(a,b)}_m(x) \, (1-x)^a (1+x)^b \dx = \delta_{nm}.
\end{align}
A number of common orthogonal polynomials are special cases of Jacobi polynomials, e.g.~Chebyshev and ultraspherical polynomials. A \emph{weighted} orthogonal polynomial refers to an orthogonal polynomial multiplied by its orthogonality weight, e.g.~the weighted Jacobi polynomials are $W^{(a,b)}_n(x) \coloneqq (1-x)^a (1+x)^b P_n^{(a,b)}(x)$. 

Semiclassical Jacobi orthogonal polynomials $\{ Q^{t,(a,b,c)}_n(x) \}_{n\in \mathbb{N}_0}$ are a shifted generalization of the Jacobi polynomials. These are univariate orthogonal polynomials with respect to the inner product
\begin{align}
\int_{0}^1  Q^{t,(a,b,c)}_n(x) Q^{t,(a,b,c)}_m(x) \, x^a (1-x)^b (t-x)^c \dx = \delta_{nm},
\end{align}
where $t > 1$, $a,b > -1$, and $c \in \mathbb{R}$. They were introduced by Magnus \cite[Sec.~5]{Magnus1995} and are a building block for a variety of methods. When $c = 0$, these become scaled-and-shifted Jacobi polynomials and we drop the $t$ dependence. That is, we have for any $t>1$,
\begin{align}
Q_n^{t,(a,b,0)}(x) = 2^{(a+b)/2} P_n^{(a,b)}(1-2x).
\end{align}
As with all univariate orthogonal polynomials, a three term recurrence exists for the generation of the (semiclassical) Jacobi polynomials. Equivalently, there exist tridiagonal Jacobi matrices, denoted by $X_{(a,b)}$ and $X_{t,(a,b,c)}$, such that
\begin{align}
x {\bf P}^{(a,b)}(x)  = {\bf P}^{(a,b)}(x) X_{(a,b)}, \;\; x {\bf Q}^{t,(a,b,c)}(x)  = {\bf Q}^{t,(a,b,c)}(x) X_{t,(a,b,c)},
\end{align}
where ${\bf P}^{(a,b)}(x)$ and ${\bf Q}^{t,(a,b,c)}(x)$ denote the quasimatrix of the bases $\{ P^{(a,b)}_n(x) \}_{n\in \mathbb{N}_0}$ and $\{ Q^{t,(a,b,c)}_n(x) \}_{n\in \mathbb{N}_0}$, respectively. 

The following lemma concerning the integral of the (semiclassical) Jacobi weights is used in the construction of the stiffness matrix of the hierarchical basis introduced in \cref{sec:basis}.
\begin{lemma}[Normalization]
\label{lem:normalization}
For $a, b > -1$, $c \in \mathbb{R}$, and $t > 1$,
\begin{align}
p_{(a,b)} &\coloneqq \int_{-1}^1 (1-x)^a (1+x)^b \, \dx = 2^{a+b+1} \beta(a+1,b+1),\\
q_{t,(a,b,c)} &\coloneqq \int_0^1 x^a (1-x)^b (t-x)^c \, \dx = t^c \beta(1+a,1+b) {_2}F_1\left(\begin{matrix}1+a,-c\\ 2+a+b\end{matrix};1/t\right),
\end{align}
where $\beta$ is the Beta function \cite[Sec.~5.12]{dlmf} and ${_2}F_1$ is the Gauss hypergeometric function \cite[Sec.~15.2]{dlmf}. 
\end{lemma}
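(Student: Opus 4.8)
The plan is to evaluate each integral directly by reducing it to a standard Beta or hypergeometric integral representation. Both formulas are classical Eulerian integrals, so the strategy is to massage each into the canonical form and cite the relevant special-function identity from \cite{dlmf}.

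For the first integral $p_{(a,b)}$, I would substitute $x = 2s - 1$ (equivalently $s = (1+x)/2$), which maps $(-1,1)$ to $(0,1)$ and gives $1 - x = 2(1-s)$, $1 + x = 2s$, and $\dx = 2\,\mathrm{d}s$. The integrand $(1-x)^a(1+x)^b$ then becomes $2^{a+b}(1-s)^a s^b$, and after pulling out the Jacobian the integral reads $2^{a+b+1}\int_0^1 s^b (1-s)^a\,\mathrm{d}s$. The remaining integral is precisely the Euler Beta integral $\beta(b+1, a+1) = \beta(a+1,b+1)$ (the Beta function is symmetric), which is convergent exactly when $a, b > -1$, matching the hypotheses. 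This yields $p_{(a,b)} = 2^{a+b+1}\beta(a+1,b+1)$.

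For the second integral $q_{t,(a,b,c)}$, the extra factor $(t-x)^c$ is what prevents a bare Beta evaluation, and handling it is the main (mild) obstacle. The idea is to factor $(t - x)^c = t^c (1 - x/t)^c$, pulling out $t^c$ so that the integral becomes $t^c \int_0^1 x^a (1-x)^b (1 - x/t)^c\,\mathrm{d}x$. Since $t > 1$, the argument $x/t \in [0,1/t) \subset [0,1)$ stays strictly inside the disk of convergence, so the binomial factor is well behaved on the whole interval. I would then invoke Euler's integral representation of the Gauss hypergeometric function, namely \cite[Sec.~15.6]{dlmf}, \begin{align} \int_0^1 x^{\alpha-1}(1-x)^{\gamma-\alpha-1}(1 - zx)^{-\beta}\,\mathrm{d}x = \beta(\alpha,\gamma-\alpha)\, {_2}F_1\!\left(\begin{matrix}\alpha,\beta\\ \gamma\end{matrix};z\right), \end{align} valid for $\mathrm{Re}\,\gamma > \mathrm{Re}\,\alpha > 0$. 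Matching parameters via $\alpha - 1 = a$, $\gamma - \alpha - 1 = b$, and $-\beta = c$, $z = 1/t$, gives $\alpha = a+1$, $\gamma = a + b + 2$, and $\beta = -c$, so the prefactor is $\beta(a+1, b+1)$ and the hypergeometric argument is $1/t$. Assembling the pieces produces $q_{t,(a,b,c)} = t^c \beta(1+a,1+b)\, {_2}F_1\!\left(\begin{matrix}1+a,-c\\ 2+a+b\end{matrix};1/t\right)$, exactly as stated.

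The only point requiring a little care is verifying the convergence and applicability conditions: the hypotheses $a, b > -1$ guarantee $\mathrm{Re}\,\alpha = a+1 > 0$ and $\mathrm{Re}(\gamma - \alpha) = b + 1 > 0$, so Euler's formula applies and the Beta prefactor is finite; and $t > 1$ ensures $z = 1/t \in (0,1)$ lies strictly within the region where the integral representation holds without encountering the branch point of $(1-zx)^{-\beta}$ at $x = 1/z = t > 1$. Note that $c$ is unrestricted in $\mathbb{R}$ because $\beta = -c$ only enters through the convergent integral and the hypergeometric series, neither of which imposes a sign condition on $c$ when $z < 1$. As a consistency check, setting $c = 0$ collapses ${_2}F_1$ to $1$ and recovers the scaled-and-shifted Beta integral, in agreement with the earlier identity $Q_n^{t,(a,b,0)} = 2^{(a+b)/2}P_n^{(a,b)}(1-2x)$.
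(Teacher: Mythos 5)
Your proof is correct. The paper offers no explicit proof of this lemma---it simply cites the DLMF entries for the Beta and Gauss hypergeometric functions---and your derivation (affine substitution onto $[0,1]$ for $p_{(a,b)}$, then factoring out $t^c$ and applying Euler's integral representation of ${_2}F_1$ with $\alpha = a+1$, $\gamma = a+b+2$, $\beta = -c$, $z = 1/t$, together with the verification that $a,b>-1$ and $t>1$ satisfy the validity conditions) is exactly the standard argument those citations point to.
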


\subsection{Generalized Zernike and Zernike annular polynomials}

We denote the generalized Zernike polynomials by $Z^{(a)}_{n,m,j}(x,y)$. These are two-dimensional multivariate orthogonal polynomials in the Cartesian coordinates $(x,y)$ defined on the unit disk. Here $a$ is the Zernike weight parameter for the orthogonality weight, $n$ denotes the polynomial degree, $m$ denotes the Fourier mode, and $j$ denotes the Fourier sign. For $n$ odd,  $m \in \{ 1, 3, \ldots, n\}$, and for $n$ even, $m \in \{0, 2, \ldots, n\}$. If $m = 0$ then $j = 1$, otherwise $j \in  \{0,1\}$. Throughout this work, we denote the polar coordinates by $(r, \theta)$ where $r^2 = x^2 + y^2$ and $\theta = \mathrm{atan}(y/x)$. We define the generalized Zernike polynomials as
\begin{align}
Z^{(a)}_{n,m,j}(x,y) \coloneqq Y_{m,j}(x,y) P_{(n-m)/2}^{(a,m)}(2r^2-1),
\label{eq:zernike-def}
\end{align}
where   
\begin{align}
Y_{m,j}(x,y) \coloneqq r^m \times
\begin{cases}
\cos(m \theta) & \text{if $m \in \mathbb{N}_0$ and $j=1$},\\
\sin(m \theta) & \text{if $m \in \mathbb{N}$ and $j=0$},
\end{cases}
\end{align} 
are the harmonic polynomials, orthogonal on the surface of the disk. The generalized Zernike polynomials satisfy
\begin{align}
\iint_{\Omega_0} Z^{(a)}_{n,m,j}(x,y) Z^{(a)}_{\nu,\mu,\zeta}(x,y) (1-r^2)^a \dx\dy = \frac{\pi_m}{2^{m+a+2}} \delta_{n\nu} \delta_{m \mu} \delta_{j\zeta},
\end{align}
where $\pi_m = 2\pi$ if $m=0$ and $\pi_m = \pi$ for $m \in \mathbb{N}$.

The generalized Zernike annular polynomials are the extension of the Zernike polynomials to the annulus domain and are used  in the construction of gyroscopic polynomials \cite{Ellison2023}. They are denoted by $Z^{\rho,(a,b)}_{n,m,j}(x,y)$, where $0 < \rho < 1$, with the same relationship between $n$, $m$ and $j$ as for the Zernike polynomials. The non-generalized family (orthogonal with respect to the unweighted $L^2$-norm) was first introduced by Tatian \cite{Tatian1974} and Mahajan \cite{Mahajan1981}. We define the generalized family as:
\begin{align}
Z^{\rho,(a,b)}_{n,m,j}(x, y) \coloneqq Y_{m,j}(x,y) Q_{(n-m)/2}^{t,(a,b,m)}\left(\frac{1-r^2}{1 - \rho^2}\right).
\label{def:2Dannuli}
\end{align}
Throughout this work we denote $\tau \coloneqq t(1-r^2)$ where $t = (1-\rho^2)^{-1}$. Note that
\begin{align}
1-r^2 &= t^{-1}\tau, \;\; r^2 = t^{-1}(t - \tau), \;\; \text{and} \;\; r^2 - \rho^2 = t^{-1}(1 - \tau).
\label{eq:tau}
\end{align}
Utilizing \cref{eq:tau}, one finds that $Z^{\rho, (a,b)}_{n,m,j}$ satisfy \cite{Papadopoulos2023}
\begin{align}
\iint_{\Omega_\rho} Z^{\rho, (a,b)}_{n,m,j}(x,y) Z^{\rho, (a,b)}_{\nu,\mu,\zeta}(x,y) (1-r^2)^a(r^2-\rho^2)^b  \dx\dy = \frac{\pi_m}{2t^{a+b+m+1}} \delta_{n\nu} \delta_{m \mu} \delta_{j\zeta}.
\end{align}

Out of the spectral methods they considered for disks, Boyd and Yu \cite{Boyd2011} noted that Zernike polynomials often offer the best approximation per degree of freedom and a similar observation was made for the annulus \cite[Sec.~6]{Papadopoulos2023}. In order to compute with these polynomials quickly, we heavily rely on new methods for quasi-optimal $\mathcal{O}(N_p^2 \log N_p)$ complexity analysis (expansion) and synthesis (evaluation) operators introduced by Slevinsky \cite{Slevinsky2019} and Gutleb et al.~\cite{Gutleb2023} and further studied in \cite[Sec.~4.2]{Papadopoulos2023}. In a nutshell, for the analysis, one expands a function in a Chebyshev--Fourier series (for which a fast transform exists) and utilizes fast transforms to convert these expansion coefficients to those of the Zernike (annular) expansion. The synthesis operator is the reverse process.

As many operators decompose across Fourier modes, it is useful to consider each Fourier mode of the Zernike (annular) polynomials separately. Hence, we define the quasimatrix ${\bf Z}^{\rho,(a,b,c)}_{m,j}(x,y)$ as
\begin{align}
{\bf Z}^{\rho,(a,b)}_{m,j}(x,y) \coloneqq
\left(
Z^{\rho,(a,b)}_{m,m,j}(x,y) \, | \, Z^{\rho,(a,b)}_{m+2,m,j}(x,y)\, | \, Z^{\rho,(a,b)}_{m+4,m,j}(x,y)\, | \, \cdots 
\right).
\end{align}
The quasimatrix ${\bf Z}^{(a)}_{m,j}(x,y)$ is defined analogously. 

\subsection{Raising and Laplacian matrices}
In the next section, we will show that the (weighted) mass and stiffness matrices may be computed via the raising matrices for (semiclassical) Jacobi polynomials and the Laplacian matrices for Zernike (annular) polynomials.

\begin{remark}[Fast assembly]
\label{rem:quadrature}
In the next section, we will show how to compute the entries of the stiffness and mass matrices without any need for quadrature. Moreover, the entries of the load vector may also be computed in quasi-optimal complexity. This is achieved by considering hierarchies of raising matrices for (semiclassical) Jacobi polynomial families that can be computed in optimal complexity as detailed in \cite[Sec.~4.1]{Papadopoulos2023}, see also \cite{Gutleb2023, Slevinsky2019}. The right-hand side is expanded in Zernike annular polynomials by leveraging a fast DCT $\otimes$ FFT transform to compute the coefficients of a Chebyshev $\otimes$ Fourier expansion. These are then converted to the coefficients of a Zernike annular expansion via the hierarchy of raising matrices. How we then use the raising matrices to compute the entries of the FEM matrices and vectors is the main focus of \cref{sec:basis}.

Using raising matrices rather than quadrature to assemble the FEM matrices results in a fast and stable FEM framework.
\end{remark}

We now define the hierarchy of raising matrices.

\begin{definition}[Raising matrices]
\label{def:raising-matrices}
For $m \in \mathbb{N}_0$, we denote the raising matrix for weighted Jacobi and weighted semiclassical Jacobi polynomials by  $R^{(0,m)}_{\mathrm{a}, (1,m)}$ and $R^{t,(0,0,m)}_{\mathrm{ab}, (1,1,m)}$, respectively, where
\begin{align}
(1-x) {\bf P}^{(1,m)}(x) &= {\bf P}^{(0,m)}(x) R^{(0,m)}_{\mathrm{a}, (1,m)},\\
x(1-x) {\bf Q}^{t,(1,1,m)}(x) &= {\bf Q}^{t,(0,0,m)}(x) R^{t,(0,0,m)}_{\mathrm{ab}, (1,1,m)}.
\end{align}
$R^{(0,m)}_{\mathrm{a}, (1,m)}$ and $R^{t,(0,0,m)}_{\mathrm{ab}, (1,1,m)}$ are lower triangular matrices with lower bandwidths one and two, respectively.
\end{definition}
Recently a fast QR factorization technique was introduced that allows one to compute the semiclassical Jacobi hierarchy of raising matrices $\{R^{t,(0,0,m)}_{\mathrm{ab}, (1,1,m)}\}_{m \in \{0,1,\dots,N_p\}}$ in $\mathcal{O}(N_p^2)$ complexity \cite[Sec.~3.1]{Papadopoulos2023}. Thanks to explicit expressions for $R^{(0,m)}_{\mathrm{a}, (1,m)}$, one may compute the Jacobi hierarchy in the same complexity.

\begin{definition}[Laplacian matrices]
\label{def:laplacian-matrices}
For $m \in \mathbb{N}_0$, we denote the Laplacian matrix for weighted Zernike and weighted Zernike annular polynomials by  $D_m$ and $D^\rho_m$, respectively, where
\begin{align}
\Delta[(1-r^2) {\bf Z}^{(1)}_{m,j}](x,y) &= {\bf Z}^{(0)}_{m,j}(x,y) D_m,\\
\Delta[(1-r^2) (r^2-\rho^2) {\bf Z}^{(1,1)}_{m,j}](x,y) &= {\bf Z}^{(0,0)}_{m,j}(x,y) D^\rho_m.
\end{align}
$D_m$ and $D_m^\rho$ are diagonal and tridiagonal matrices, respectively.
\end{definition}
The same techniques that allow us to compute the raising matrices in optimal complexity, also allow us to compute the hierarchies of Laplacian matrices in optimal $\mathcal{O}(N_p^2)$ complexity \cite[Sec.~3.4]{Papadopoulos2023}.

\section[FEM Basis]{The FEM basis: hat and bubble functions}
\label{sec:basis}
In this section we construct the hat and bubble functions that form the continuous hierarchical FEM basis $\Phi$ for disk and annulus domains.

\begin{definition}[Radial affine transformation]
Consider the disk and annular cells $K_0 = \Omega_{0, \rho}$, $\rho>0$, $K_1 = \Omega_{\rho_1, \rho_2}$, $0 < \rho_1 < \rho_2$. Then we define the radial affine transformation of the Zernike (annular) polynomials as
\begin{align}
{\bf Z}^{K_{0}, (a)}(x,y) &\coloneqq {\bf Z}^{(a)}\left(\frac{x}{\rho}, \frac{y}{\rho}\right) \;\;\text{and}\;\;
{\bf Z}^{K_{1}, (a,b)}(x,y) \coloneqq {\bf Z}^{\rho_1/\rho_2,(a,b)}\left(\frac{x}{\rho_2}, \frac{y}{\rho_2}\right).
\end{align}
\end{definition} 
 
\begin{definition}[Bubble functions]
\label{def:bubble}
Consider the disk and annular cells $K_0 = \Omega_{0,\rho}$, $\rho > 0$, and $K_1 = \Omega_{\rho_1,\rho_2}$, $0 < \rho_1 < \rho_2$.  The disk and annulus bubble functions (otherwise known as internal shape functions) are denoted by $B^{K_0}_{n,m,j}(x,y)$ and $B^{K_1}_{n,m,j}(x,y)$, respectively, where
\begin{align}
B^{K_0}_{n,m,j}(x,y) &\coloneqq (1-(r/\rho)^2)Z^{K_0, (1)}_{n,m,j}(x,y), \\
B^{K_1}_{n,m,j}(x,y) &\coloneqq (1-(r/\rho_2)^2)((r/\rho_2)^2 - (\rho_1/\rho_2)^2) Z^{K_1,(1,1)}_{n,m,j}(x,y). 
\end{align}
\end{definition}
Note that the disk bubble functions vanish at $r=\rho$ and the annulus bubble functions vanish at $r = \rho_1$ and $r=\rho_2$.

We now distinguish between hat functions that are supported on two adjacent annulus elements and those supported on the disk element and the adjacent annulus element.

\begin{definition}[Disk-annulus hat functions]
\label{def:hat1}
Consider the disk and annular cells $K_0 = \Omega_{0,\rho_1}$ and $K_1 = \Omega_{\rho_1,\rho_2}$, $0 < \rho_1 < \rho_2$.
The disk-annulus hat functions (otherwise known as external shape functions) are defined as follows:
\begin{align*}
H^{K_0,K_1}_{m,j}(x,y) \coloneqq 
\begin{cases}
\kappa_m Z^{K_0, (0)}_{m,m,j}(x,y)  & \text{in} \;\; K_0,\\
(1-(r/\rho_2)^2) Z^{K_1, (1,0)}_{m,m,j}(x,y)  & \text{in} \;\; K_1.
\end{cases}
\end{align*}
The coefficient $\kappa_m \coloneqq (1-(\rho_1/\rho_2)^2) (\rho_1/\rho_2)^m$ ensures the continuity of $H^{K_0, K_1}_{m,j}(x,y)$ at $r = \rho_1$. 
\end{definition}
The disk-annulus hat functions are only supported on the disk cell and the adjacent annular cell, vanishing at $r=\rho_2$. 

\begin{definition}[Annulus-annulus hat functions]
\label{def:hat2}
Consider the annular cells $K_1 = \Omega_{\rho_1,\rho_2}$ and $K_2 = \Omega_{\rho_2,\rho_3}$, $0 < \rho_1 < \rho_2 < \rho_3$. The annulus-annulus hat functions (otherwise known as external shape functions) are defined as follows
\begin{align*}
H^{K_1, K_2}_{m,j}(x,y) \coloneqq 
\begin{cases}
 \gamma_m ((r/\rho_2)^2 - (\rho_1/\rho_2)^2) Z^{K_1, (0,1)}_{m,m,j}(x,y)  & \text{in} \;\; K_1,\\
(1-(r/\rho_3)^2) Z^{K_2, (1,0)}_{m,m,j}(x,y)& \text{in} \;\; K_2.
\end{cases}
\end{align*}
The coefficient $\gamma_m \coloneqq   (1-(\rho_2/\rho_3)^2) (\rho_2/\rho_3)^m (1-(\rho_1/\rho_2)^2)^{-1}$ ensures the continuity of $H^{K_1, K_2}_{m,j}(x,y)$ at $r = \rho_2$.
\end{definition}
The annulus-annulus hat functions are only supported on the two annular cells, vanishing at $r=\rho_1$ and $r=\rho_3$. In \cref{fig:hats-bubble}, we plot a one-dimensional slice at $\theta = 0$ of the bubble and hat functions with the Fourier mode and sign $(m, j) = (0,1)$ and $(m,j) = (1,1)$ on a two-cell mesh for increasing degree $N_p$. We emphasize that bubble functions are only ever supported on one cell and hat functions are supported on a maximum of two cells.

\begin{figure}[h!]
\centering
\includegraphics[height=5cm]{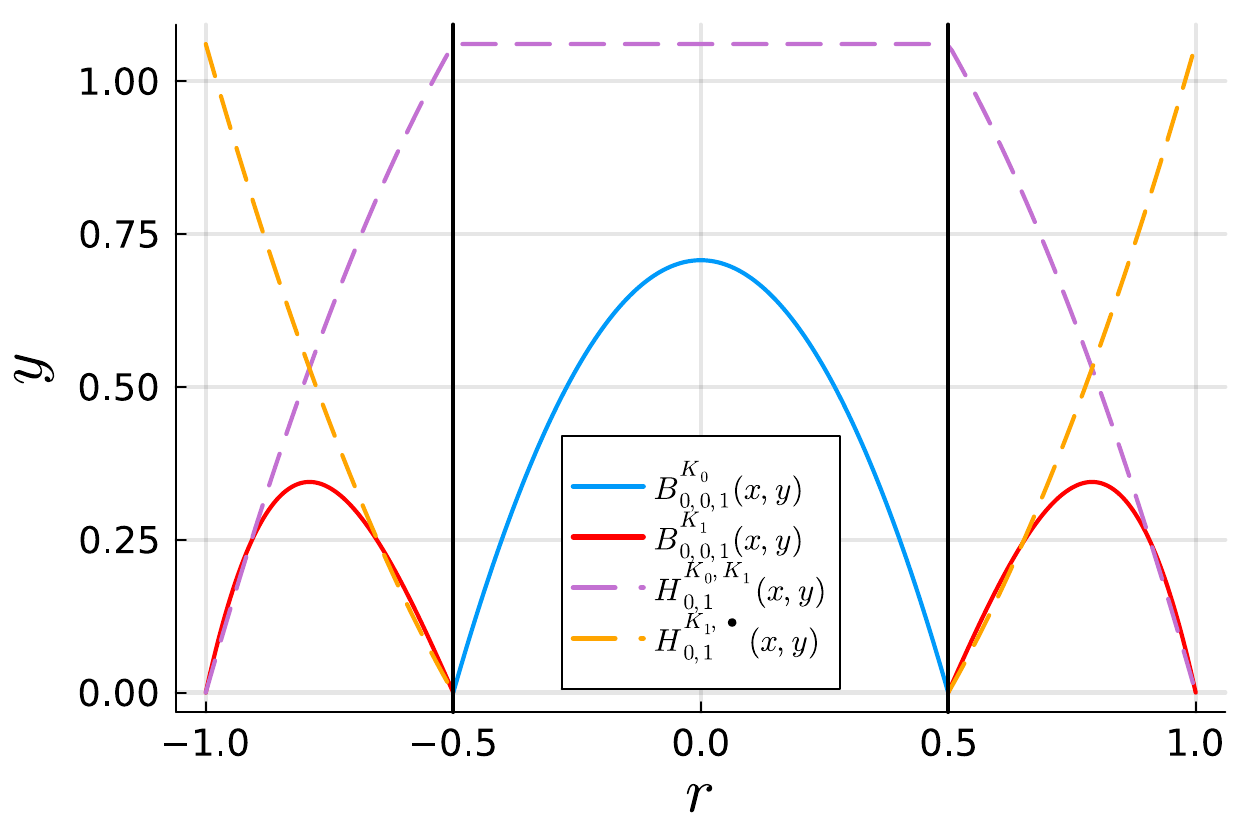}
\includegraphics[height=5cm]{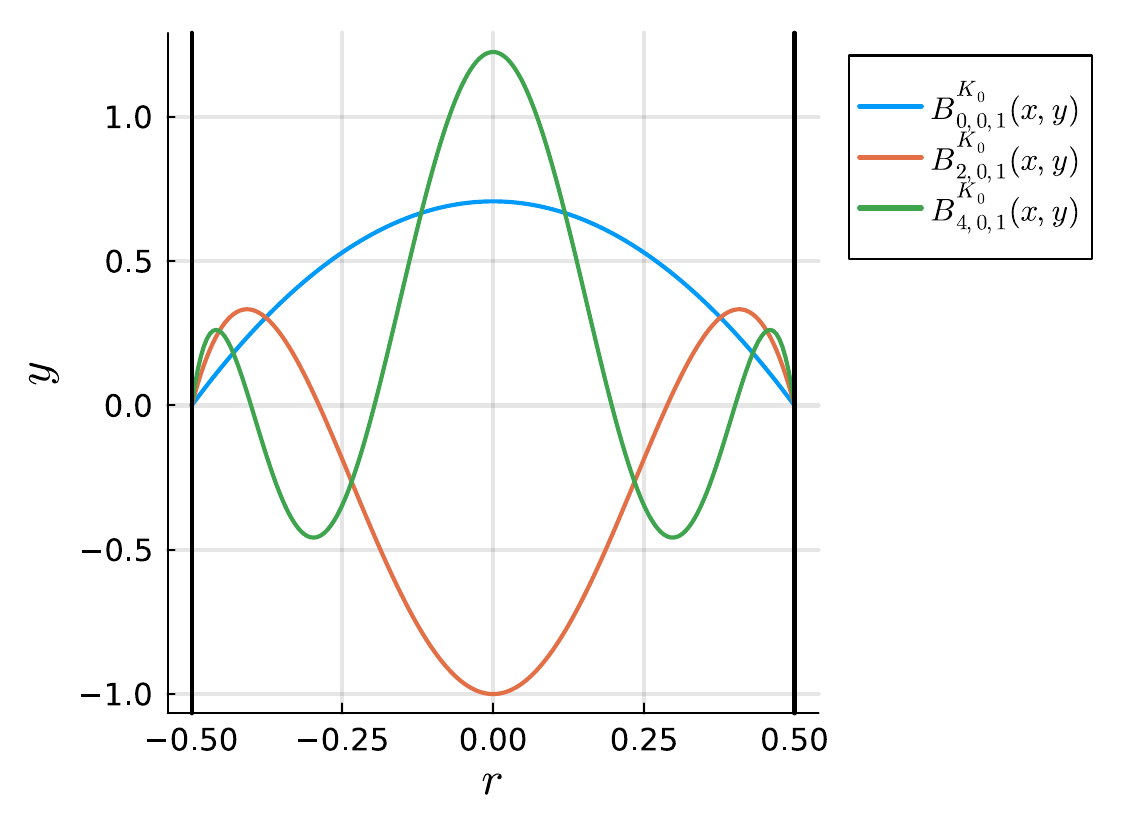} \\
\includegraphics[height=5cm]{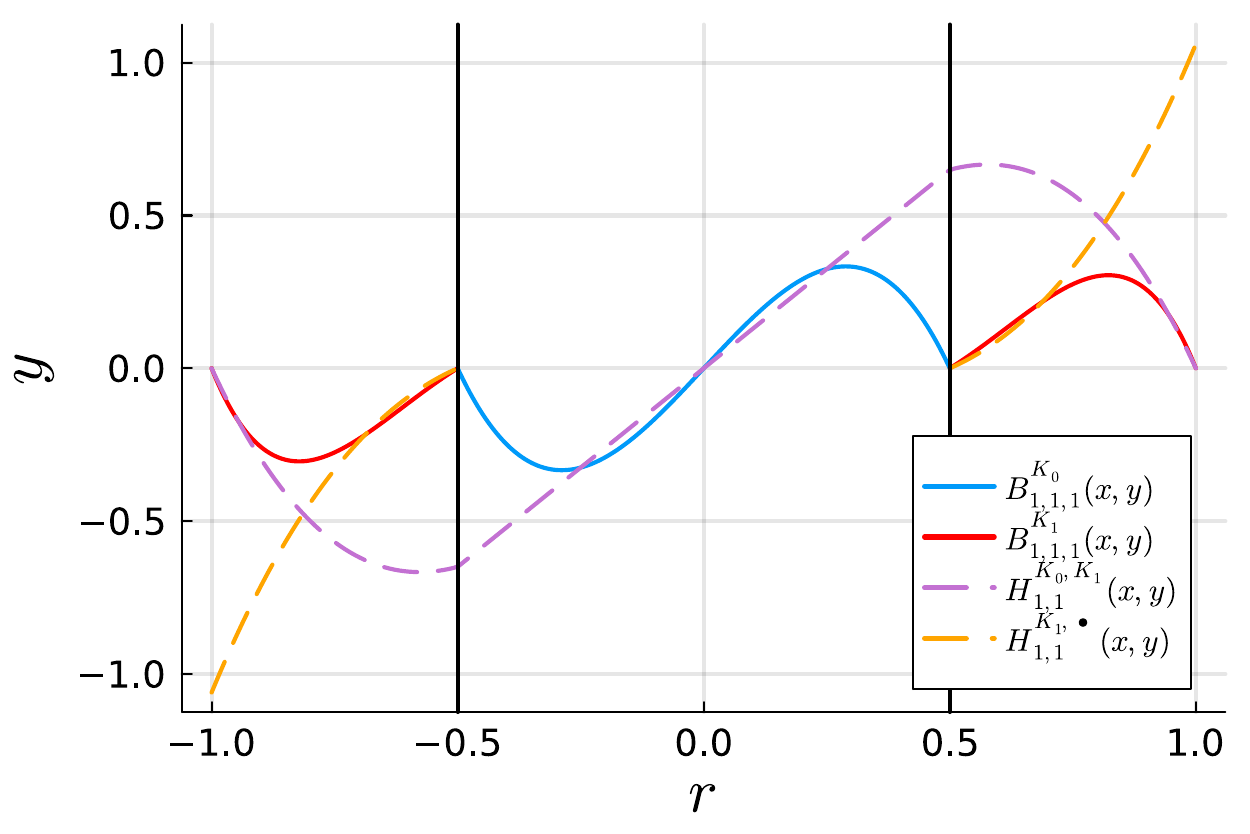}
\includegraphics[height=5cm]{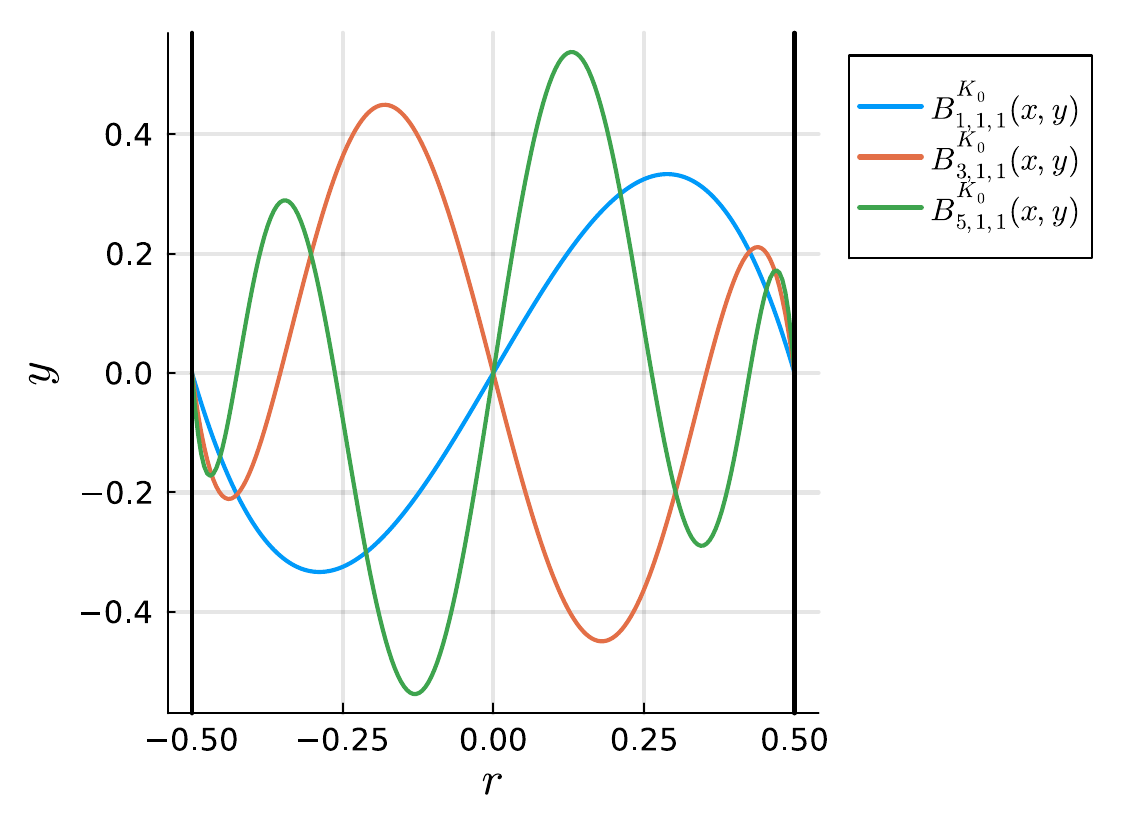}
\caption{Consider the mesh $\mathcal{T}_h = \{ \bar K_0, \bar K_1 \}$, $K_0 = \Omega_{0,1/2}$, $K_1 = \Omega_{1/2,1}$. (Left column) A one-dimensional slice at $\theta=0$ is plotted for the hat and lowest degree bubble functions with the Fourier mode and sign $(m, j)=(0,1)$ (top left) and $(m,j)=(1,1)$ (bottom left) across the two cells. The solid black vertical lines indicate the edges of the cells. The bubble and hat functions are plotted with a solid and dashed line, respectively. (Right column) A one-dimensional slice at $\theta=0$ of the first three bubble functions with $(m,j)=(0,1)$ (top right) and $(m,j) = (1,1)$ (bottom right) on the inner disk cell.
}
\label{fig:hats-bubble}
\end{figure}

We denote the continuous hierarchical basis quasimatrix restricted to the Fourier mode $(m,j)$ by ${\bm \Phi}_{m,j}$. We order the basis functions such that the basis functions of the same degree are grouped together. Hence, the hat functions across all the cells appear first and the bubble basis functions appear after.
Consider the mesh $\mathcal{T}_h = \{\bar K_j\}_{j=0}^{N_h-1}$ where $K_j = \Omega_{\rho_j, \rho_{j+1}}$, $0 = \rho_0 < \rho_1 < \cdots < \rho_{N_h}$. Let
\begin{align}
{\bf H}^{\mathcal{T}_h}_{m,j} &\coloneqq 
\begin{pmatrix}
H^{K_0, K_1}_{m,j} \;\;  \cdots \;\; H^{K_{N_h-2}, K_{N_h-1}}_{m,j} \;\; H^{ K_{N_h-1}, \bullet}_{m,j}
\end{pmatrix} \;\; (\text{$N_h$ hat functions}),\\
{\bf B}^{\mathcal{T}_h}_{n,m,j} &\coloneqq
\begin{pmatrix}
B^{K_0}_{n,m,j} \;\; B^{K_1}_{n,m,j} \;\; \cdots \;\; B^{K_{N_h-1}}_{n,m,j}
\end{pmatrix} \;\; (\text{$N_h$ bubble functions per degree $n$}).
\end{align}
We use the superscript $^\bullet$ in a hat function that is only defined on one cell (such as the hat function at the boundary). ${\bf H}^{\mathcal{T}_h}_{m,j}(x,y)$ and ${\bf B}^{\mathcal{T}_h}_{n,m,j}(x,y)$ are the quasimatrices of the hat and bubble functions, respectively, on the mesh $\mathcal{T}_h$ restricted to the Fourier mode $(m,j)$ and polynomial degree $n$. The hierarchical basis quasimatrix defined on the mesh $\mathcal{T}_h$, restricted to the Fourier mode $(m,j)$ is
\begin{align}
{\bm \Phi}^{\mathcal{T}_h}_{m,j} \coloneqq
\begin{pmatrix}
{\bf H}^{\mathcal{T}_h}_{m,j} & {\bf B}^{\mathcal{T}_h}_{m,m,j} & {\bf B}^{\mathcal{T}_h}_{m+2,m,j} &\cdots
\end{pmatrix}.
\label{def:Phi-mj}
\end{align}

As is standard in FEM, we derive a number results for the local assembly of a matrix with respect to a reference element. The global assembly of the matrices is then deduced in the classical manner. With this in mind, we define the hierarchical basis quasimatrices on the unit disk domain $\Omega_0$ and the annulus domain $\Omega_\rho$ as:
\begin{align}
{\bm \Phi}^{\Omega_0}_{m,j}(x,y) &\coloneqq 
\begin{pmatrix}
H^{\Omega_0, \bullet}_{m,j}(x,y) & B^{\Omega_0}_{m,m,j}(x,y) & B^{\Omega_0}_{m+2,m,j}(x,y) & \cdots
\end{pmatrix},\\
{\bm \Phi}^{\Omega_\rho}_{m,j}(x,y) &\coloneqq 
\begin{pmatrix}
H^{\bullet, \Omega_\rho}_{m,j}(x,y) & H^{\Omega_\rho, \bullet}_{m,j}(x,y) & B^{\Omega_\rho}_{m,m,j} & B^{\Omega_\rho}_{m+2,m,j}(x,y) & \cdots
\end{pmatrix},\\
{\bm \Phi}^{\Omega_0}(x,y) &\coloneqq 
\begin{pmatrix}
{\bm \Phi}^{\Omega_0}_{0,1}(x,y) & {\bm \Phi}^{\Omega_0}_{1,0}(x,y) & {\bm \Phi}^{\Omega_0}_{1,1}(x,y) & \cdots
\end{pmatrix}, \label{def:Phi-Omega-0} \\
{\bm \Phi}^{\Omega_\rho}(x,y) &\coloneqq 
\begin{pmatrix}
{\bm \Phi}^{\Omega_\rho}_{0,1}(x,y) & {\bm \Phi}^{\Omega_\rho}_{1,0}(x,y) & {\bm \Phi}^{\Omega_\rho}_{1,1}(x,y) & \cdots
\end{pmatrix}. \label{def:Phi-Omega-rho}
\end{align}
For clarity, let ${\bm \Phi}^{\Omega}_{m,j}$ be the $i$th entry in \cref{def:Phi-Omega-0} or \cref{def:Phi-Omega-rho}, then $m = \lfloor i /2 \rfloor$ and $j = i \, \mathrm{mod} \, 2$.

\begin{remark}[Homogeneous Dirichlet boundary condition]
A homogeneous Dirichlet boundary condition is enforced by dropping the hat functions in the basis that are nonzero on the boundary of the domain.
\end{remark}

A crucial ingredient for constructing the (weighted) mass matrices in later sections will be the following proposition, which connects our basis to  multivariate orthogonal polynomials with respect to a uniform weight:
\begin{proposition}[Raising operators]
\label{prop:raising}
Consider the unit disk domain $\Omega_0$ and the annulus domain $\Omega_\rho$, $\rho > 0$. Recall the definitions of the raising matrices  $R^{(0,m)}_{\mathrm{a}, (1,m)}$ and $R^{t,(0,0,m)}_{\mathrm{ab}, (1,1,m)}$ from \cref{def:raising-matrices}. Then
\begin{align}
{\bm \Phi}^{\Omega_0}_{m,j}(x,y) = {\bf Z}^{(0)}_{m,j}(x,y) R^{\Omega_0}_{m}  \;\; \text{and} \;\;
{\bm \Phi}^{\Omega_\rho}_{m,j}(x,y) = {\bf Z}^{\rho, (0, 0)}_{m,j}(x,y) R^{\Omega_\rho}_{m} ,
\end{align}
where
\begin{align}
R^{\Omega_0}_{m} \coloneqq 
\left(
\begin{array}{c|c}
\begin{array}{c} 1 \\ 0 \\ \vdots \end{array} & 
\begin{array}{c} \\ \raisebox{0.4\height}[0pt][0pt]{\Large $\frac{1}{2}R^{(0,m)}_{\mathrm{a}, (1,m)}$} \\ \end{array}
\end{array}
\right), 
R^{\Omega_\rho}_{m} \coloneqq \frac{1}{t}
\left(
\begin{array}{cc|c}
\begin{array}{c} r_{11} \\ r_{21} \\ 0 \\ \vdots \end{array} &
\begin{array}{c} r_{12} \\ r_{22} \\ 0 \\ \vdots \end{array} & 
\begin{array}{c} \\ \raisebox{0.2\height}[0pt][0pt]{\Large $\frac{1}{t} R^{t, (0,0,m)}_{\mathrm{ab}, (1,1,m)}$} \\ \\ \end{array}
\end{array}
\right),
\end{align}
such that $( r_{11} \;\; r_{21}  \;\; \cdots)^\top$  and $( r_{12} \;\; r_{22}  \;\; \cdots)^\top$ are the first columns of the lower bidiagonal matrices $R^{t,(0,0,m)}_{\mathrm{a},(1,0,m)}$ and $R^{t,(0,0,m)}_{\mathrm{b},(0,1,m)}$, respectively, where $x {\bf Q}^{t,(1,0,m)}(x) = {\bf Q}^{t,(0,0,m)}(x)R^{t,(0,0,m)}_{\mathrm{a},(1,0,m)}$ and $(1-x) {\bf Q}^{t,(0,1,m)}(x) = {\bf Q}^{t,(0,0,m)}(x)R^{t,(0,0,m)}_{\mathrm{b},(0,1,m)}$. 
\end{proposition}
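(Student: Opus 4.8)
The plan is to exploit the fact that every basis function appearing in ${\bm \Phi}^{\Omega_0}_{m,j}$ and ${\bm \Phi}^{\Omega_\rho}_{m,j}$ carries the same angular factor $Y_{m,j}(x,y)$ as the target quasimatrices ${\bf Z}^{(0)}_{m,j}$ and ${\bf Z}^{\rho,(0,0)}_{m,j}$. After cancelling $Y_{m,j}$, each claimed identity reduces to a purely radial statement about (semiclassical) Jacobi polynomials in the variable $\tau = t(1-r^2)$, $t=(1-\rho^2)^{-1}$, which I would verify column-by-column using the raising relations of \cref{def:raising-matrices} together with the substitutions $1-r^2=t^{-1}\tau$ and $r^2-\rho^2=t^{-1}(1-\tau)$ from \cref{eq:tau}.

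For the disk, the leading column is immediate: on the reference cell $H^{\Omega_0,\bullet}_{m,j}=Z^{(0)}_{m,m,j}$, which is the first column of ${\bf Z}^{(0)}_{m,j}$ and hence corresponds to the column $(1,0,\dots)^\top$. For the $k$-th bubble I would write $B^{\Omega_0}_{m+2k,m,j}=(1-r^2)\,Y_{m,j}\,P^{(1,m)}_{k}(2r^2-1)$ and substitute $x=2r^2-1$, so that $1-r^2=(1-x)/2$; applying the weighted-Jacobi raising relation $(1-x){\bf P}^{(1,m)}={\bf P}^{(0,m)}R^{(0,m)}_{\mathrm a,(1,m)}$ and reinstating $Y_{m,j}$ expresses the bubble as the $k$-th column of ${\bf Z}^{(0)}_{m,j}\,(\tfrac12 R^{(0,m)}_{\mathrm a,(1,m)})$, which is precisely the lower-right block of $R^{\Omega_0}_m$.

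The annulus case is structurally identical but involves two hats and the semiclassical hierarchy. Using $1-r^2=t^{-1}\tau$ and the bidiagonal relation $x{\bf Q}^{t,(1,0,m)}={\bf Q}^{t,(0,0,m)}R^{t,(0,0,m)}_{\mathrm a,(1,0,m)}$ from the statement, the inner hat $H^{\bullet,\Omega_\rho}_{m,j}=(1-r^2)Z^{\rho,(1,0)}_{m,m,j}$ becomes $t^{-1}$ times a combination of $Z^{\rho,(0,0)}_{m,m,j}$ and $Z^{\rho,(0,0)}_{m+2,m,j}$ whose coefficients are exactly the first column $(r_{11},r_{21},0,\dots)^\top$ of $R^{t,(0,0,m)}_{\mathrm a,(1,0,m)}$; similarly, using $r^2-\rho^2=t^{-1}(1-\tau)$ and $(1-x){\bf Q}^{t,(0,1,m)}={\bf Q}^{t,(0,0,m)}R^{t,(0,0,m)}_{\mathrm b,(0,1,m)}$, the outer hat $H^{\Omega_\rho,\bullet}_{m,j}=(r^2-\rho^2)Z^{\rho,(0,1)}_{m,m,j}$ yields the second column $(r_{12},r_{22},0,\dots)^\top$. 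Finally, for the bubbles I would combine both weight factors, $(1-r^2)(r^2-\rho^2)=t^{-2}\tau(1-\tau)$, and invoke the semiclassical raising relation $x(1-x){\bf Q}^{t,(1,1,m)}={\bf Q}^{t,(0,0,m)}R^{t,(0,0,m)}_{\mathrm{ab},(1,1,m)}$ of \cref{def:raising-matrices}, producing the block $t^{-2}R^{t,(0,0,m)}_{\mathrm{ab},(1,1,m)}$; assembling the three pieces and factoring out the common $t^{-1}$ gives $R^{\Omega_\rho}_m$.

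The one genuinely delicate point is bookkeeping rather than conceptual: I must check that the coupling constants $\kappa_m$ and $\gamma_m$ from \cref{def:hat1,def:hat2} collapse to unity on the single reference cells, so that the reference hats really are $(1-r^2)Z^{\rho,(1,0)}_{m,m,j}$ and $(r^2-\rho^2)Z^{\rho,(0,1)}_{m,m,j}$, and that all powers of $t$ are tracked consistently through the three substitutions in \cref{eq:tau}. The bidiagonality of $R^{t,(0,0,m)}_{\mathrm a,(1,0,m)}$ and $R^{t,(0,0,m)}_{\mathrm b,(0,1,m)}$, which ensures that only two entries survive in each first column, is what guarantees the sparse arrowhead shape of the leading two columns, and I would confirm it directly from the degree structure of these raising operators.
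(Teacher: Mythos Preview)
Your proposal is correct and follows essentially the same approach as the paper's proof: both factor out the common angular part $Y_{m,j}$, reduce to radial identities via the substitutions $\eta=2r^2-1$ (disk) and $\tau=t(1-r^2)$ (annulus), and then verify the columns by invoking the (semiclassical) Jacobi raising relations from \cref{def:raising-matrices}. Your flag about the hat normalizations $\kappa_m,\gamma_m$ on the single reference cells is apt; the paper handles this implicitly by simply asserting $H^{\Omega_0,\bullet}_{m,j}=Z^{(0)}_{m,m,j}$ and treating the second annulus column ``similarly'', so your bookkeeping remark is, if anything, more careful than the original.
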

\begin{proof}
We first consider the unit disk.

\noindent \textbf{(Disk).} Note that the first entries of ${\bm \Phi}^{\Omega_0}_{m,j}(x,y)$ and ${\bf Z}^{(0)}_{m,j}(x,y)$ are both ${Z}^{(0)}_{m,m,j}(x,y)$ which corresponds to the first column in $R^{\Omega_0}_{m}$. Now, for $\eta = 2r^2-1$,
\begin{align}
\begin{split}
{\bf B}^{\Omega_0}_{m,j}(x,y)&= \frac{1}{2} Y_{m,j}(x,y) (1-\eta)  {\bf P}^{(1,m)}(\eta) \\
& = \frac{1}{2} Y_{m,j}(x,y)   {\bf P}^{(0,m)}(\eta )R^{(0,m)}_{\mathrm{a}, (1,m)}
=   {\bf Z}^{(0)}_{m,j}(x,y) \frac{1}{2} R^{(0,m)}_{\mathrm{a}, (1,m)}.
\end{split}
\end{align}
The first and third equalities follow from \cref{eq:zernike-def} and the second equality follows from \cref{def:raising-matrices}.

\noindent \textbf{(Annulus).} Note that ${\bm \Phi}^{\Omega_\rho}_{m,j}= \left( H^{\bullet, \Omega_\rho}_{m,j} \;\; H^{\Omega_\rho, \bullet}_{m,j}  \;\; {\bf B}^{\Omega_\rho}_{m,j} \right)$. Now, for $\tau = t(1-r^2)$, $(x,y) \in \Omega_\rho$,
\begin{align}
\begin{split}
H^{\bullet, \Omega_\rho}_{m,j}(x,y) &= (1-r^2) Z^{\rho, (1,0)}_{m,j}(x,y)\\
& = t^{-1}Y_{m,j}(x,y) \tau {Q}_0^{t,(1,0,m)}(\tau)\\
&= t^{-1}Y_{m,j}(x,y) [r_{11} {Q}_0^{t,(0,0,m)}(\tau) + r_{21} {Q}_1^{t,(0,0,m)}(\tau)]\\
& =  t^{-1} [r_{11} {Z}^{\rho,(0,0)}_{m,m,j}(x,y) + r_{21} {Z}^{\rho,(0,0)}_{m+2,m,j}(x,y)].
\end{split}
\end{align}
The first equality follows by the definition of the hat function, the second and fourth equalities follow from \cref{def:2Dannuli} and the third equality follows from the definition of $r_{11}$ and $r_{12}$. Thus we recover the first column and the second column follows similarly. We recover the remaining columns as follows:
\begin{align}
\begin{split}
{\bf B}^{\Omega_\rho}_{m,j}(x,y) &= (1-r^2) (r^2-\rho^2) Z^{\rho, (1,1)}_{m,j}(x,y)\\
&=t^{-2} Y_{m,j}(x,y) \tau (1-\tau) {\bf Q}^{t,(1,1,m)}(\tau)\\
&=t^{-2} Y_{m,j}(x,y) {\bf Q}^{t,(0,0,m)}(\tau) R^{t, (0,0,m)}_{\mathrm{ab}, (1,1,m)}
= t^{-2} {\bf Z}^{\rho,(0,0)}_{m,j}  R^{t, (0,0,m)}_{\mathrm{ab}, (1,1,m)}.
\end{split}
\end{align}
\end{proof}
\begin{remark}
$R^{\Omega_0}_{m}$ is upper triangular with upper bandwidth one and $R^{\Omega_\rho}_{m}$ is almost upper triangular (the upper-triangular structure is disrupted by the $r_{21}$-entry) with upper bandwidth two.
\end{remark}

\subsection{Mass and stiffness matrices}
\label{sec:mass-and-stiffness}
This subsection focuses on computing the entries of the mass and stiffness matrices.  The entries are computed via the raising matrices defined in \cref{prop:raising}. In particular we emphasize that no quadrature is required. 

\begin{theorem}[Mass matrix]
\label{th:mass}
Consider the unit disk domain $\Omega_0$ and the annulus domain $\Omega_\rho$, $\rho > 0$. Define the mass matrices $M^{\Omega_0} = \langle ({\bm \Phi}^{\Omega_0})^\top,  {\bm \Phi}^{\Omega_0} \rangle_{L^2(\Omega_0)}$ and $M^{\Omega_\rho}  = \langle ({\bm \Phi}^{\Omega_\rho})^\top,  {\bm \Phi}^{\Omega_\rho} \rangle_{L^2(\Omega_\rho)}$. Then both  $M^{\Omega_0}$ and $M^{\Omega_\rho}$ are block-diagonal where the blocks correspond to each Fourier mode:
\begin{align}
\begin{split}
M^{\Omega_0} = 
\begin{pmatrix}
M^{\Omega_0}_{0,1} & & &\\
&M^{\Omega_0}_{1,0}&&\\
&&M^{\Omega_0}_{1,1}&\\
&&&\ddots
\end{pmatrix}, \;\;
M^{\Omega_\rho} = 
\begin{pmatrix}
M^{\Omega_\rho}_{0,1} & & &\\
&M^{\Omega_\rho}_{1,0}&&\\
&&M^{\Omega_\rho}_{1,1}&\\
&&&\ddots
\end{pmatrix}
\end{split}
\label{eq:mass-matrix-block-diagonal}
\end{align}
where
\begin{align}
M^{\Omega_0}_{m,j} = \langle ({\bm \Phi}^{\Omega_0}_{m,j})^\top,  {\bm \Phi}^{\Omega_0}_{m,j} \rangle_{L^2(\Omega_0)} &= \frac{\pi_m}{2^{m+2}}(R^{\Omega_0}_m)^\top R^{\Omega_0}_m, \label{eq:mass-disk}\\
M^{\Omega_\rho}_{m,j} = \langle ({\bm \Phi}^{\Omega_\rho}_{m,j})^\top,  {\bm \Phi}^{\Omega_\rho}_{m,j} \rangle_{L^2(\Omega_\rho)} &=  \frac{\pi_m}{2t^{m+1}} (R^{\Omega_\rho}_m)^\top R^{\Omega_\rho}_m.
\end{align}
Thus $M_{m,0} = M_{m,1}$.
\end{theorem}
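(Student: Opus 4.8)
The plan is to reduce everything to the raising-operator identities of \cref{prop:raising} together with the $L^2$-orthogonality of the generalized Zernike (annular) polynomials at weight parameter zero. The crucial observation is that the mass matrix uses the plain (unweighted) $L^2(\Omega)$ inner product, and \cref{prop:raising} expresses each Fourier-block quasimatrix ${\bm \Phi}^{\Omega_0}_{m,j}$ (resp.\ ${\bm \Phi}^{\Omega_\rho}_{m,j}$) as ${\bf Z}^{(0)}_{m,j} R^{\Omega_0}_m$ (resp.\ ${\bf Z}^{\rho,(0,0)}_{m,j} R^{\Omega_\rho}_m$). Since the Zernike weight parameter is zero, the relevant orthogonality relations are taken against the unweighted measure, i.e.\ exactly the inner product defining the mass matrix, so all the weight factors $(1-r^2)$, $(r^2-\rho^2)$ carried by the hat and bubble functions have been absorbed into the constant matrices $R_m$.

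First I would establish the block-diagonal structure. Writing the full basis quasimatrix as the concatenation of the Fourier blocks $\{{\bm \Phi}^{\Omega}_{m,j}\}$ per \cref{def:Phi-Omega-0,def:Phi-Omega-rho}, I would compute an off-diagonal block $\langle ({\bm \Phi}^{\Omega}_{m,j})^\top, {\bm \Phi}^{\Omega}_{\mu,\zeta}\rangle_{L^2}$ for $(m,j)\neq(\mu,\zeta)$. Substituting the raising-operator representation and pulling the constant matrices out of the bilinear inner product, this reduces to $(R_m)^\top \langle ({\bf Z}^{(0)}_{m,j})^\top, {\bf Z}^{(0)}_{\mu,\zeta}\rangle_{L^2} R_\mu$. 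The inner Gram matrix vanishes by the $\delta_{m\mu}\delta_{j\zeta}$ factors in the Zernike orthogonality relation, so every off-diagonal Fourier block is zero.

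Next I would compute the diagonal blocks. For the disk, substituting ${\bm \Phi}^{\Omega_0}_{m,j} = {\bf Z}^{(0)}_{m,j} R^{\Omega_0}_m$ and again extracting $R^{\Omega_0}_m$ gives $M^{\Omega_0}_{m,j} = (R^{\Omega_0}_m)^\top G_m R^{\Omega_0}_m$, where $G_m = \langle ({\bf Z}^{(0)}_{m,j})^\top, {\bf Z}^{(0)}_{m,j}\rangle_{L^2(\Omega_0)}$. By the Zernike orthogonality relation at $a=0$, $G_m = \tfrac{\pi_m}{2^{m+2}} I$, yielding \cref{eq:mass-disk}. The annulus case is identical using ${\bm \Phi}^{\Omega_\rho}_{m,j} = {\bf Z}^{\rho,(0,0)}_{m,j} R^{\Omega_\rho}_m$ and the annular orthogonality relation at $(a,b)=(0,0)$, which supplies the constant $\tfrac{\pi_m}{2t^{m+1}}$. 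Finally, since neither $\pi_m$ nor $R^{\Omega_0}_m$, $R^{\Omega_\rho}_m$ carries any dependence on the Fourier sign $j$, the identity $M_{m,0}=M_{m,1}$ (for $m\geq 1$, the only regime in which both signs occur) follows immediately.

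The calculation itself is routine linear algebra; the only point demanding care is the quasimatrix bookkeeping — verifying that the factorization ${\bm \Phi}_{m,j} = {\bf Z}_{m,j} R_m$ permits $\langle ({\bm \Phi}_{m,j})^\top, {\bm \Phi}_{m,j}\rangle = R_m^\top \langle ({\bf Z}_{m,j})^\top, {\bf Z}_{m,j}\rangle R_m$, i.e.\ that the constant coefficient matrices factor out of the bilinear $L^2$-pairing and that the (infinite) Zernike expansions are termwise integrable so the orthogonality relation applies entrywise. This is the step I would state most explicitly, but it presents no genuine obstacle once the weight-zero Zernike Gram matrix is recognized as a scalar multiple of the identity.
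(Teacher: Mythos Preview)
Your proposal is correct and follows essentially the same approach as the paper: factor ${\bm \Phi}_{m,j}$ through the raising operators of \cref{prop:raising}, then reduce the computation to the Gram matrix of the weight-zero Zernike (annular) family, which is a scalar multiple of the identity. The only cosmetic difference is that the paper rederives the Zernike Gram matrix by an explicit change to polar coordinates and then to the Jacobi variable ($\eta=2r^2-1$ on the disk, $\tau=t(1-r^2)$ on the annulus), whereas you simply invoke the already-stated Zernike orthogonality relations at $a=0$ and $(a,b)=(0,0)$; similarly, the paper establishes the block-diagonal structure via the raw angular integral rather than by first passing through $R_m$, but both arguments amount to the same $\delta_{m\mu}\delta_{j\zeta}$ orthogonality.
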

\begin{proof}
If either $m \neq \mu$ or $j \neq \zeta$, then
\begin{align}
\begin{split}
\langle ({\bm \Phi}^{\Omega_0}_{m,j})^\top,  {\bm \Phi}^{\Omega_0}_{\mu,\zeta} \rangle_{L^2(\Omega_0)} = \langle ({\bm \Phi}^{\Omega_\rho}_{m,j})^\top,  {\bm \Phi}^{\Omega_\rho}_{\mu,\zeta} \rangle_{L^2(\Omega_\rho)} &= {\bf 0},
\end{split}\label{eq:different-fourier}
\end{align}
where $\vectt{0}$ denotes the infinite-dimensional matrix of zeroes.  \cref{eq:different-fourier} follows by substituting in the definitions of the basis functions and noting that $\int_0^{2\pi} \sin(m\theta + j \pi/2) \sin(\mu \theta + \zeta \pi/2) \, \mathrm{d}\theta = 0$ if either  $m \neq \mu$ or $j \neq \zeta$. In other words hat and bubble functions with different Fourier modes have a mass matrix entry of zero. This implies that the block diagonal structure in \cref{eq:mass-matrix-block-diagonal} holds.

For the disk cell, one finds that
\begin{align}
\begin{split}
&M^{\Omega_0}_{m,j} = \langle ({\bm \Phi}^{\Omega_0}_{m,j})^\top,  {\bm \Phi}^{\Omega_0}_{m,j} \rangle_{L^2(\Omega_0)}
=(R^{\Omega_0}_m)^\top \langle ({\bf Z}^{(0)}_{m,j})^\top,  {\bf Z}^{(0)}_{m,j} \rangle_{L^2(\Omega_0)}R^{\Omega_0}_m\\
&\indent =\left( \int_0^{2\pi} \sin^2(m\theta +j \pi/2)\, \mathrm{d}\theta \right) \\
&\indent \indent \times (R^{\Omega_0}_m)^\top \langle ({\bf P}^{(0,m)})^\top,  2^{-(m+2)} (1+\eta)^m {\bf P}^{(0,m)} \rangle_{L^2(-1,1)}R^{\Omega_0}_m\\
&\indent ={2^{-(m+2)}}{\pi_m}(R^{\Omega_0}_m)^\top R^{\Omega_0}_m.
\end{split}
\end{align}
The first equality follows by the definition of the mass matrix and the second equality from \cref{prop:raising}. The third equality follows from \cref{eq:zernike-def} and a change from Cartesian coordinates $(x,y)$ to polar coordinates $(r,\theta)$ followed a the second change of coordinates $\eta = 2r^2-1$. The final equality follows from a direct evaluation of the $\theta$-dependent integral and the orthogonality of the Jacobi polynomials ${\bf P}^{(0,m)}$. 

A similar calculation reveals that
\begin{align}
\begin{split}
&M^{\Omega_\rho}_{m,j} = \langle ({\bm \Phi}^{\Omega_\rho}_{m,j})^\top,  {\bm \Phi}^{\Omega_\rho}_{m,j} \rangle_{L^2(\Omega_\rho)}
=(R^{\Omega_\rho}_m)^\top \langle ({\bf Z}^{\rho, (0,0)}_{m,j})^\top,  {\bf Z}^{\rho, (0,0)}_{m,j} \rangle_{L^2(\Omega_\rho)}R^{\Omega_\rho}_m\\
&\indent =\left( \int_0^{2\pi} \sin^2(m\theta +j \pi/2)\, \mathrm{d}\theta \right) \\
&\indent \indent \times (R^{\Omega_\rho}_m)^\top  \langle ({\bf Q}^{t,(0,0,m)})^\top,  2^{-1} t^{-(m+1)} (t-\tau)^m {\bf Q}^{t,(0,0,m)} \rangle_{L^2(0,1)}R^{\Omega_\rho}_m\\
&\indent =\frac{\pi_m}{2t^{m+1}}(R^{\Omega_\rho}_m)^\top R^{\Omega_\rho}_m.
\end{split}
\end{align}
\end{proof}

\begin{remark}
$M^{\Omega_0}_{m,j}$ is tridiagonal and $M^{\Omega_\rho}_{m,j}$  has a $4 \times 4$ ``arrowhead'' followed by a pentadiagonal tail. Thus $M^{\Omega_\rho}_{m,j}$ is a $B^3$-Arrowhead matrix with block-bandwidths $(2,2)$ and sub-block-bandwidth $1$ \cite[Def.~4.1]{Olver2023}. Hence, the global mass matrix is block diagonal where each submatrix is a $B^3$-Arrowhead matrix with block-bandwidths $(2,2)$ and sub-block-bandwidth $1$.
\end{remark}

\begin{remark}
Mass matrix entries corresponding to the $L^2$-inner product of bubble functions centred on different cells are equal to zero since their supports have zero measurable overlap. 
\end{remark}

The following lemma concerning the Laplacian applied to harmonic polynomial will be required to compute the entries of the stiffness matrix.
\begin{lemma}
\label{lem:weighted-harmonic}
Let $\rho>0$ and recall that $Y_{m,j}(x,y) \coloneqq r^m \sin(m\theta + j \pi/2)$. Then the following holds:
\begin{align}
\Delta [(1-r^2) Y_{m,j}] = -\Delta [(r^2-\rho^2) Y_{m,j}] = -4(m+1) Y_{m,j}. \label{eq:harmonic}
\end{align}
\end{lemma}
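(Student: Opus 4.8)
The plan is to expand each Laplacian using the Leibniz product rule $\Delta(fg) = (\Delta f)\,g + 2\,\nabla f\cdot\nabla g + f\,\Delta g$, exploiting two structural facts about $Y_{m,j}$: it is a harmonic polynomial, so $\Delta Y_{m,j}=0$, and it is homogeneous of degree $m$ in $(x,y)$, since $Y_{m,j} = r^m \sin(m\theta+j\pi/2)$. First I would record the elementary identities $\Delta(1-r^2) = -4$ and $\Delta(r^2-\rho^2) = 4$, which follow immediately from $\Delta r^2 = \Delta(x^2+y^2) = 4$ together with the fact that $1$ and $\rho^2$ are constants.

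The crux is the cross term $\nabla f\cdot\nabla Y_{m,j}$. For $f = 1-r^2$ we have $\nabla f = -2(x,y)$, so $\nabla f\cdot\nabla Y_{m,j} = -2\,(x\partial_x + y\partial_y)Y_{m,j}$. Here I would invoke Euler's homogeneity relation: because $Y_{m,j}$ is homogeneous of degree $m$, the Euler operator satisfies $(x\partial_x + y\partial_y)Y_{m,j} = m\,Y_{m,j}$, equivalently $r\,\partial_r Y_{m,j} = m\,Y_{m,j}$. Hence the cross term equals $-2m\,Y_{m,j}$, and the product rule gives $\Delta[(1-r^2)Y_{m,j}] = -4\,Y_{m,j} + 2(-2m\,Y_{m,j}) + 0 = -4(m+1)Y_{m,j}$, as claimed. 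The computation for $r^2-\rho^2$ is identical up to sign: $\nabla(r^2-\rho^2) = 2(x,y)$ produces a cross term $+2m\,Y_{m,j}$ while $\Delta(r^2-\rho^2) = +4$, so $\Delta[(r^2-\rho^2)Y_{m,j}] = 4(m+1)Y_{m,j} = -\Delta[(1-r^2)Y_{m,j}]$, which yields the chain of equalities in \cref{eq:harmonic}.

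There is no substantive obstacle here; the only point requiring any care is the verification that $Y_{m,j}$ is both harmonic and homogeneous of degree $m$. I would either take this from its definition as a harmonic polynomial, or confirm it in one line using the polar form $\Delta = \partial_{rr} + r^{-1}\partial_r + r^{-2}\partial_{\theta\theta}$: the radial contribution $m(m-1)+m = m^2$ cancels the angular contribution $-m^2$, giving $\Delta Y_{m,j}=0$, while the $r^m$ factor makes homogeneity manifest. Everything else is a direct application of the product rule, so the lemma follows.
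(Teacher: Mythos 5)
Your proof is correct. The paper itself disposes of this lemma with a one-line appeal to a direct calculation in polar coordinates, expanding $\Delta = \partial^2_{rr} + \partial_r/r + \partial^2_{\theta\theta}/r^2$ applied to the products $(1-r^2)Y_{m,j}$ and $(r^2-\rho^2)Y_{m,j}$; you instead organize the computation through the Leibniz identity $\Delta(fg) = (\Delta f)g + 2\nabla f\cdot\nabla g + f\Delta g$ and reduce everything to two structural facts, harmonicity ($\Delta Y_{m,j}=0$) and homogeneity of degree $m$ (so $(x\partial_x+y\partial_y)Y_{m,j}=mY_{m,j}$ by Euler's relation). Both routes are short, but yours is arguably the more transparent one: it isolates exactly why the constant $-4(m+1)$ appears, namely $-4$ from $\Delta(1-r^2)$ and $-4m$ from the cross term, and it generalizes verbatim to any harmonic homogeneous polynomial in $\mathbb{R}^d$ (the constant becoming $-(2d+4m)$), whereas the paper's polar expansion is tied to the two-dimensional radial setting it works in throughout. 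The only detail your argument leans on beyond the product rule is that $Y_{m,j}$ is harmonic and homogeneous, and your one-line polar verification of this ($m(m-1)+m-m^2=0$) closes that gap, so the proof is complete.
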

\begin{proof}
\cref{eq:harmonic} follows by a direction calculation after applying the polar coordinate version of the Laplacian $\Delta = \partial^2_{rr} + \partial_r/r +\partial^2_{\theta\theta}/r^2$.
\end{proof}

\begin{theorem}[Stiffness matrix]
Consider the unit disk domain $\Omega_0$ and the annulus domain $\Omega_\rho$, $\rho > 0$. Recall the definition of the Laplacian matrices $D_m$ and $D^\rho_m$ in \cref{def:laplacian-matrices} and the normalization constants $p_{(a,b)}$ and $q_{t,(a,b,c)}$ from \cref{lem:normalization}.  Define the stiffness matrices $A^{\Omega_0} = \langle (\nabla {\bm \Phi}^{\Omega_0})^\top,  \nabla {\bm \Phi}^{\Omega_0} \rangle_{L^2(\Omega_0)}$ and $A^{\Omega_\rho}  = \langle (\nabla {\bm \Phi}^{\Omega_\rho})^\top,  \nabla {\bm \Phi}^{\Omega_\rho} \rangle_{L^2(\Omega_\rho)}$. Then both  $A^{\Omega_0}$ and $A^{\Omega_\rho}$ are block-diagonal where the blocks correspond to each Fourier mode:
\begin{align}
\begin{split}
A^{\Omega_0} = 
\begin{pmatrix}
A^{\Omega_0}_{0,1} & & &\\
&A^{\Omega_0}_{1,0}&&\\
&&A^{\Omega_0}_{1,1}&\\
&&&\ddots
\end{pmatrix}, \;\;
A^{\Omega_\rho} = 
\begin{pmatrix}
A^{\Omega_\rho}_{0,1} & & &\\
&A^{\Omega_\rho}_{1,0}&&\\
&&A^{\Omega_\rho}_{1,1}&\\
&&&\ddots
\end{pmatrix}
\end{split}
\label{eq:stiffness-matrix-block-diagonal}
\end{align}
where
\begin{align}
A^{\Omega_0}_{m,j} = \langle (\nabla {\bm \Phi}^{\Omega_0}_{m,j})^\top,  \nabla{\bm \Phi}^{\Omega_0}_{m,j} \rangle_{L^2(\Omega_0)} 
=
\begin{pmatrix}
  \frac{m \pi}{p_{(0,m)}} &  \hspace*{-25mm}  \begin{matrix} 0 & \cdots &  \end{matrix} \\
  \begin{matrix} 0 \\ \vdots \\ \end{matrix}  &
  \begin{matrix}
  \hspace*{-\arraycolsep}
  \phantom{x_{11}} & \phantom{x_{11}} & \phantom{x_{11}}
  \hspace*{-\arraycolsep}
  \\
  & \raisebox{-0.2\height}[0pt][0pt]{\LARGE$-\frac{\pi_m}{2^{m+3}} D_m$} & \\
  & &
  \end{matrix}
\end{pmatrix}, \label{eq:stiffness-disk}
\end{align}
and
\begin{align}
A^{\Omega_\rho}_{m,j} =  \langle (\nabla {\bm \Phi}^{\Omega_\rho}_{m,j})^\top,  \nabla{\bm \Phi}^{\Omega_\rho}_{m,j} \rangle_{L^2(\Omega_\rho)} 
=
\begin{pmatrix}
   \begin{matrix} a_m & b_m  \\ b_m & c_m & \end{matrix} 
   & \hspace*{-20mm} \begin{matrix} d_m & 0 & \cdots  \\ e_m & 0 & \cdots \end{matrix} \\
   \hspace*{-3mm}\begin{matrix}  d_m & e_m \\ 0 & 0 \\ \vdots & \vdots \end{matrix}  &
  \begin{matrix}
  \hspace*{-\arraycolsep}
  \phantom{x_{11}} & \phantom{x_{11}} & \phantom{x_{11}}
  \hspace*{-\arraycolsep}
  \\
  & \raisebox{-0.2\height}[0pt][0pt]{\hspace*{-10mm}\LARGE$-\frac{\pi_m}{2t^{m+3}} D^\rho_m$} & \\
  & &
  \end{matrix}
\end{pmatrix}, \label{eq:stiffness-annulus}
\end{align}
such that
\begin{align}
a_m &=  \pi_m \frac{2-\rho^{2m}(m(2+m) -2m(2+m)\rho^2 + (2-m(2+m))\rho^4)}{(2+m)q_{t,(1,0,m)}},\\
b_m &= -\frac{2\pi(1-\rho^{4+2m})}{(2+m)q_{t,(1,0,m)}^{1/2}q_{t,(0,1,m)}^{1/2}},\\
c_m &= \pi_m \frac{2-2\rho^{4+2m}+m(2+m)t^{-2}}{(2+m)q_{t,(0,1,m)}},\\
d_m &= 2\pi_m(m+1) q_{t,(1,1,m)}^{1/2} q_{t,(1,0,m)}^{-1/2} t^{-(m+3)},\\
e_m & = -2\pi_m(m+1) q_{t,(1,1,m)}^{1/2} q_{t,(0,1,m)}^{-1/2} t^{-(m+3)}.
\end{align}
Thus $A_{m,0} = A_{m,1}$.
\end{theorem}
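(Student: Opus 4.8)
The plan is to follow the structure of the proof of \cref{th:mass}: first establish the Fourier block-diagonal form, then compute each diagonal block by integration by parts, reducing everything to the Laplacian matrices of \cref{def:laplacian-matrices} and the one-dimensional normalization constants of \cref{lem:normalization}. For the block-diagonal structure I would show $\langle (\nabla {\bm \Phi}^{\Omega}_{m,j})^\top, \nabla {\bm \Phi}^{\Omega}_{\mu,\zeta}\rangle = {\bf 0}$ whenever $(m,j) \neq (\mu,\zeta)$. Writing $\nabla = \hat{r}\,\partial_r + r^{-1}\hat{\theta}\,\partial_\theta$ and using that every basis function factorizes as a radial function times $\sin(m\theta + j\pi/2)$, the integrand $\nabla u \cdot \nabla v$ splits into a radial-derivative piece carrying the angular factor $\sin(m\theta+j\pi/2)\sin(\mu\theta+\zeta\pi/2)$ and an angular-derivative piece carrying $m\mu\cos(m\theta+j\pi/2)\cos(\mu\theta+\zeta\pi/2)$; both $\theta$-integrals vanish unless $(m,j)=(\mu,\zeta)$. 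The same computation shows the surviving angular integrals equal $\pi$ for either sign $j$, giving $A_{m,0}=A_{m,1}$.

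For the unit disk block I would apply Green's first identity within a single Fourier mode. The bubble functions vanish on $\partial\Omega_0$, and the lowest-mode hat equals the harmonic polynomial $Y_{m,j}$ up to a constant, so $\Delta H^{\Omega_0,\bullet}_{m,j} = 0$. Moving the Laplacian onto the hat therefore annihilates all hat--bubble entries and reduces the hat--hat entry to the boundary integral $\oint_{r=1} H\,\partial_r H\,\mathrm{d}s = m\pi/p_{(0,m)}$. For the bubble--bubble block I would write $\langle (\nabla {\bf B}^{\Omega_0}_{m,j})^\top, \nabla {\bf B}^{\Omega_0}_{m,j}\rangle = -\langle ({\bf B}^{\Omega_0}_{m,j})^\top, {\bf Z}^{(0)}_{m,j}\rangle D_m$ using \cref{def:laplacian-matrices}; the orthogonality of ${\bf Z}^{(0)}_{m,j}$ together with symmetry of the stiffness matrix (a product of a one-sided banded factor with the diagonal $D_m$ is forced to be diagonal) yields $-\tfrac{\pi_m}{2^{m+3}} D_m$.

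The annulus block follows the same route, but now the two hats are $(1-r^2)Y_{m,j}$ and $(r^2-\rho^2)Y_{m,j}$ up to constants, and by \cref{lem:weighted-harmonic} their Laplacians equal $\mp 4(m+1)Y_{m,j}$, which are nonzero. Consequently each hat--hat entry $a_m, b_m, c_m$ is an interior integral of a $Y_{m,j}^2$-type product against these Laplacians plus a boundary integral over the circle where the partner hat does not vanish ($r=\rho$ or $r=1$); all of these are closed-form through the constants $q_{t,(a,b,c)}$ of \cref{lem:normalization}. The hat--bubble entries reduce to $\langle Y_{m,j}, B^{\Omega_\rho}_{m+2k,m,j}\rangle$, and since the bubble is $Y_{m,j}$ times the weighted semiclassical Jacobi polynomial $Q^{t,(1,1,m)}_k(\tau)$ while $Y_{m,j}$ contributes only a degree-zero factor in $\tau$, orthogonality forces these to vanish for $k\geq 1$; only the couplings $d_m, e_m$ to the first bubble survive, producing the arrowhead. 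The bubble--bubble block is $-\langle ({\bf B}^{\Omega_\rho}_{m,j})^\top, {\bf Z}^{\rho,(0,0)}_{m,j}\rangle D^\rho_m$, which is tridiagonal since $D^\rho_m$ is tridiagonal and the stiffness matrix is symmetric, and equals $-\tfrac{\pi_m}{2t^{m+3}} D^\rho_m$.

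I expect the \emph{main obstacle} to be the annulus hat block: because the hats are not harmonic, deriving the closed forms for $a_m,\dots,e_m$ requires simultaneously tracking the interior integrals (via \cref{lem:weighted-harmonic}) and the boundary integrals at both radii, and then assembling the hypergeometric normalization constants $q_{t,(a,b,c)}$ into the stated rational expressions in $\rho$ and $t$. Everything else --- the Fourier decoupling, the vanishing of the hat--bubble couplings beyond the first bubble, and the banding of the bubble blocks --- is essentially the bookkeeping already set up for \cref{th:mass}.
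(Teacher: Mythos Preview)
Your proposal is correct and follows essentially the same route as the paper: Fourier decoupling via angular orthogonality, integration by parts for any entry involving a bubble (which vanishes on the boundary), the Laplacian matrices $D_m$, $D^\rho_m$ for the bubble--bubble blocks, and \cref{lem:weighted-harmonic} to reduce the annulus hat--bubble couplings to a single nonzero term. The only tactical difference is in the hat--hat entries. For the disk $(1,1)$ entry and the annulus entries $a_m,b_m,c_m$, the paper computes $\|\nabla[(1-r^2)Y_{m,j}]\|^2_{L^2}$ etc.\ \emph{directly} in polar coordinates via $\nabla=(\partial_r,\,r^{-1}\partial_\theta)^\top$, obtaining one-dimensional radial integrals of monomials in $r$; you instead propose Green's identity, which splits each entry into an interior piece (handled by \cref{lem:weighted-harmonic}) plus a boundary integral at whichever radius the hat does not vanish. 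Both are valid; the paper's direct computation is arguably shorter for the annulus since it sidesteps bookkeeping the two boundary circles, while your route is more uniform with the rest of the argument. For the bubble--bubble blocks the paper appeals directly to the weighted orthogonality of ${\bf Z}^{(1)}_{m,j}$ (resp.\ ${\bf Z}^{\rho,(1,1)}_{m,j}$) to read off $-\tfrac{\pi_m}{2^{m+3}}D_m$ (resp.\ $-\tfrac{\pi_m}{2t^{m+3}}D^\rho_m$) in one step, whereas your symmetry-forces-bandedness argument gives the correct sparsity pattern but still needs the diagonal (resp.\ tridiagonal) entries identified; pairing with the weighted family rather than ${\bf Z}^{(0)}_{m,j}$ is what makes that identification immediate.
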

\begin{proof}
If either $m \neq \mu$ or $j \neq \zeta$, then
\begin{align}
\begin{split}
\langle (\nabla {\bm \Phi}^{\Omega_0}_{m,j})^\top,  \nabla{\bm \Phi}^{\Omega_0}_{\mu,\zeta} \rangle_{L^2(\Omega_0)} = \langle (\nabla{\bm \Phi}^{\Omega_\rho}_{m,j})^\top, \nabla {\bm \Phi}^{\Omega_\rho}_{\mu,\zeta} \rangle_{L^2(\Omega_\rho)} &= {\bf 0},
\end{split}\label{eq:different-fourier-stiffness}
\end{align}
where $\vectt{0}$ denotes the infinite-dimensional matrix of zeroes. \cref{eq:different-fourier-stiffness} follows by substituting in the definitions of the basis functions and noting that $\int_0^{2\pi} \sin(m\theta + j \pi/2) \sin(\mu \theta + \zeta \pi/2) \, \mathrm{d}\theta = 0$ if either  $m \neq \mu$ or $j \neq \zeta$. In other words hat and bubble functions with different Fourier modes have a stiffness matrix entry of zero.  This implies that \cref{eq:stiffness-matrix-block-diagonal} holds.

First consider the disk cell.

\noindent \textbf{(Disk).} The (1,1) entry in \cref{eq:stiffness-disk} follows by a direct calculation:
\begin{align}
\begin{split}
\langle \nabla Z^{(0)}_{m,m,j}, \nabla Z^{(0)}_{m,m,j} \rangle_{L^2(\Omega_0)}
&= p_{(0,m)}^{-1} \iint_{\Omega_0} \nabla Y_{m,j}(x,y) \cdot \nabla Y_{m,j}(x,y) \, \dx\dy\\
&  = 2 \pi m^2 p_{(0,m)}^{-1} \int_{0}^1 r^{2m-1} \,\dr = m \pi p_{(0,m)}^{-1}.
\end{split}
\end{align}
The first equality follows from \cref{eq:zernike-def} and noting that $P^{(0,m)}_0(x) =  p_{(0,m)}^{-1/2}$. The second equality follows by utilizing the gradient in polar coordinates, i.e.~$\nabla = (\partial_r \;\; \partial_\theta/r)^\top$ and a change from Cartesian coordinates to polar coordinates.

The remainder of ${\bm \Phi}^{\Omega_0}_{m,j}$ consists of bubble functions which vanish on the boundary of the cell. Thus one may perform an integration by parts and the boundary term vanishes, i.e.
\begin{align}
\begin{split}
\langle (\nabla {\bf B}^{\Omega_0}_{m,j})^\top, \nabla {\bf B}^{\Omega_0}_{m,j} \rangle_{L^2(\Omega_0)} 
&= - \langle  ({\bf B}^{\Omega_0}_{m,j})^\top, \Delta {\bf B}^{\Omega_0}_{m,j} \rangle_{L^2(\Omega_0)} \\
&= -\langle (1-r^2) ({\bf Z}^{(1)}_{m,j})^\top, {\bf Z}^{(1)}_{m,j} \rangle_{L^2(\Omega_0)} D_m 
= - \frac{\pi_m}{2^{m+3}}  D_m.
\end{split}
\end{align}
The final equality follows from the orthogonality of ${\bf Z}^{(1)}_{m,j}$ where the normalization constant is calculated via the definition \cref{eq:zernike-def}. It remains to show that the off-diagonal entries are zero. This follows as:
\begin{align}
\begin{split}
\langle \nabla Z^{(0)}_{m,m,j} , \nabla {\bf B}^{\Omega_0}_{m,j} \rangle_{L^2(\Omega_0)} 
&= -\langle \Delta Z^{(0)}_{m,m,j}, {\bf B}^{\Omega_0}_{m,j} \rangle_{L^2(\Omega_0)} \\
&=- p_{(0,m)}^{-1/2} \langle \Delta Y_{m,j}, {\bf B}^{\Omega_0}_{m,j} \rangle_{L^2(\Omega_0)} = 0.
\end{split}
\end{align}
The final equality follows since, by definition, $\Delta Y_{m,j}(x,y) = 0$.

\noindent \textbf{(Annulus).}  We first consider the interaction with the bubble functions with themselves. As the bubble functions vanish on the boundary of the element, one may perform an integration by parts and the boundary term vanishes. Hence,
\begin{align}
\begin{split}
&\langle (\nabla {\bf B}^{\Omega_\rho}_{m,j})^\top, \nabla {\bf B}^{\Omega_\rho}_{m,j} \rangle_{L^2(\Omega_\rho)} 
 = - \langle  ({\bf B}^{\Omega_\rho}_{m,j})^\top, \Delta {\bf B}^{\Omega_\rho}_{m,j} \rangle_{L^2(\Omega_\rho)} \\
&\indent = -\langle (1-r^2)(r^2-\rho)^2({\bf Z}^{\rho,(1,1)}_{m,j})^\top, {\bf Z}^{\rho,(1,1)}_{m,j} \rangle_{L^2(\Omega_\rho)} D^\rho_m 
 = -  \frac{\pi_m}{2t^{m+3}}   D^\rho_m.
\end{split}
\end{align}
We now compute $d_m$ in the (1,3)-entry and the trailing vector of zeroes from the (1,4)-entry in the first row. Note that these entries correspond to
\begin{align}
\begin{split}
\langle \nabla [(1-r^2) Z^{\rho, (1,0)}_{m,m,j}], \nabla {\bf B}^{\Omega_\rho}_{m,j} \rangle_{L^2(\Omega_\rho)}
= -\langle \Delta [(1-r^2) Z^{\rho, (1,0)}_{m,m,j}], {\bf B}^{\Omega_\rho}_{m,j} \rangle_{L^2(\Omega_\rho)},
\end{split}
\end{align}
where the equality follows by an integration by parts. Then, by utilizing \cref{lem:weighted-harmonic} and \cref{def:2Dannuli}, we see that
\begin{align}
\begin{split}
&-\langle \Delta [(1-r^2) Z^{\rho, (1,0)}_{m,m,j}], {\bf B}^{\Omega_\rho}_{m,j} \rangle_{L^2(\Omega_\rho)}
 = 4(m+1)  \langle  Z^{\rho, (1,0)}_{m,m,j}, {\bf B}^{\Omega_\rho}_{m,j} \rangle_{L^2(\Omega_\rho)}\\
&\indent= 2\pi_m(m+1) t^{-(m+3)} \frac{q_{t,(1,1,m)}^{1/2}}{q_{t,(1,0,m)}^{1/2}} \\
& \indent \indent \times \int_0^1 \tau (1-\tau)(t-\tau)^m Q_0^{t,(1,1,m)}(\tau) {\bf Q}^{t,(1,1,m)}(\tau) \, \mathrm{d}\tau\\
&\indent= 2\pi_m(m+1) t^{-(m+3)} \frac{q_{t,(1,1,m)}^{1/2}}{q_{t,(1,0,m)}^{1/2}} \begin{pmatrix} 1 &0 & 0 & \cdots \end{pmatrix}.
\end{split}
\end{align}
Thus we recover the value of $d_m$ and the trailing zeroes. The value of $e_m$ and the subsequent trail of zeroes in the second row follow in an almost identical fashion. It remains to verify the values of $a_m$, $b_m$, and $c_m$. Note that by \cref{def:2Dannuli} and \cref{lem:weighted-harmonic}
\begin{align}
\begin{split}
a_m &= \langle \nabla [(1-r^2) Z^{\rho, (1,0)}_{m,m,j}], \nabla [(1-r^2) Z^{\rho, (1,0)}_{m,m,j}] \rangle_{L^2(\Omega_\rho)}\\
& =q_{t,(1,0,m)}^{-1}  \langle \nabla [(1-r^2) Y_{m,j}], \nabla [(1-r^2) Y_{m,j}] \rangle_{L^2(\Omega_\rho)}\\
&=q_{t,(1,0,m)}^{-1} \iint_{\Omega_\rho} m^2 (r^{2m-2} + r^{2m+2}) + (2m+1) r^{2m+2} \sin^2(m\theta + j \pi/2)\, \dx\dy\\
&=q_{t,(1,0,m)}^{-1} \left[2 \pi \int_{\rho}^1 m^2 (r^{2m-1} + r^{2m+3}) \, \dr + \pi_m \int_{\rho}^1 (2m+1) r^{2m+3} \, \dr \right]. \label{eq:am}
\end{split}
\end{align}
The third  equality in \cref{eq:am} followed by utilizing the gradient in polar coordinates, i.e.~$\nabla = (\partial_r \;\; \partial_\theta/r)^\top$. The value of $a_m$ follows by computing the final integral in \cref{eq:am}. The values of $b_m$ and $c_m$ follow similarly.
\end{proof}
\begin{remark}
$A^{\Omega_0}_{m,j}$ is diagonal and $A^{\Omega_\rho}_{m,j}$ has a $3 \times 3$ ``arrowhead'' followed by a tridiagonal tail. Thus $A^{\Omega_\rho}_{m,j}$ is a $B^3$-Arrowhead matrix with block-bandwidths $(1,1)$ and sub-block-bandwidth $1$ \cite[Def.~4.1]{Olver2023}. Hence, the global stiffness matrix is block diagonal where each submatrix is a $B^3$-Arrowhead matrix with block-bandwidths $(1,1)$ and sub-block-bandwidth $1$.
\end{remark}

\subsection{Variable Helmholtz coefficients}

In this subsection we demonstrate how to handle variable coefficients.
\subsubsection{Rotationally invariant coefficients}

\label{sec:assembly}
The hierarchical basis can discretize a rotationally invariant Helmholtz coefficient $\lambda(r^2)$ efficiently. Moreover, the sparsity of the induced weighted mass matrix $M_\lambda$ is correlated with the number of terms in a Chebyshev expansion required to resolve the coefficient.

We first consider the following lemma:
\begin{lemma}
\label{lem:jacobi-matrix-r2}
Consider the disk and annular cells $K_0 = \Omega_{0, \rho}$, $\rho>0$, $K_1 = \Omega_{\rho_1, \rho_2}$, $0 < \rho_1 < \rho_2$. Then
\begin{align}
r^2 {\bf Z}_{m,j}^{K_0, (a)}(x,y) &=\frac{\rho^2}{2} {\bf Z}_{m,j}^{K_0, (a)}(x,y) (I+X_{(a,m)}), \label{eq:r2-1}\\
r^2 {\bf Z}_{m,j}^{K_1,(a,b)}(x,y) &=  {\rho_2^2}{\bf Z}_{m,j}^{K_1,(a,b)}(x,y) (I-t^{-1}X_{t,(a,b,m)}), \label{eq:r2-2}
\end{align}
where $I$ is the identity matrix.
\end{lemma}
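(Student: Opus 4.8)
The plan is to reduce both identities to the three-term recurrences of the underlying univariate families, encoded by the Jacobi matrices $X_{(a,m)}$ and $X_{t,(a,b,m)}$; the only real work is tracking the scaling factors introduced by the radial affine transformation.

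\textbf{(Disk).} First I would unfold the radial affine transformation. Writing $\tilde r \coloneqq r/\rho$ on $K_0 = \Omega_{0,\rho}$ and recalling that $Y_{m,j}(x/\rho, y/\rho) = \rho^{-m} Y_{m,j}(x,y)$, \cref{eq:zernike-def} gives ${\bf Z}^{K_0,(a)}_{m,j}(x,y) = \rho^{-m} Y_{m,j}(x,y) {\bf P}^{(a,m)}(\eta)$ with $\eta = 2\tilde r^2 - 1 = 2r^2/\rho^2 - 1$. The key observation is that multiplication by $r^2$ becomes multiplication by $\tfrac12(1+\eta)\rho^2$, since $r^2 = \rho^2 \tilde r^2 = \rho^2(1+\eta)/2$. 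I would then apply the Jacobi matrix relation $\eta {\bf P}^{(a,m)}(\eta) = {\bf P}^{(a,m)}(\eta) X_{(a,m)}$ to rewrite $\tfrac12(1+\eta){\bf P}^{(a,m)}(\eta) = \tfrac12 {\bf P}^{(a,m)}(\eta)(I + X_{(a,m)})$, and re-absorb the prefactor $\rho^{-m}Y_{m,j}$ back into the quasimatrix. This reproduces \cref{eq:r2-1} with the factor $\rho^2/2$.

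\textbf{(Annulus).} The annulus case is structurally identical, with the semiclassical variable $\tau$ playing the role of $\eta$. On $K_1 = \Omega_{\rho_1,\rho_2}$ I would set $\tilde r \coloneqq r/\rho_2$ and $t = (1-(\rho_1/\rho_2)^2)^{-1}$, so that the argument of the semiclassical factor in \cref{def:2Dannuli} is exactly $\tau = t(1-\tilde r^2) = t(1 - r^2/\rho_2^2)$, whence ${\bf Z}^{K_1,(a,b)}_{m,j}(x,y) = \rho_2^{-m} Y_{m,j}(x,y){\bf Q}^{t,(a,b,m)}(\tau)$. Inverting the relation for $\tau$ gives $r^2 = \rho_2^2(1 - t^{-1}\tau)$, which is simply $\rho_2^2$ times the middle identity of \cref{eq:tau} after the affine rescaling. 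Applying $\tau {\bf Q}^{t,(a,b,m)}(\tau) = {\bf Q}^{t,(a,b,m)}(\tau) X_{t,(a,b,m)}$ then yields $(1 - t^{-1}\tau){\bf Q}^{t,(a,b,m)}(\tau) = {\bf Q}^{t,(a,b,m)}(\tau)(I - t^{-1}X_{t,(a,b,m)})$, and re-absorbing $\rho_2^{-m}Y_{m,j}$ produces \cref{eq:r2-2} with prefactor $\rho_2^2$.

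\textbf{(Main obstacle).} There is no deep difficulty here: the computation is a change of variables followed by a single application of the three-term recurrence. The one place to be careful is the bookkeeping of the two independent scalings, since the harmonic factor $Y_{m,j}$ picks up $\rho^{-m}$ (resp.\ $\rho_2^{-m}$) under the affine map, while the radial argument is rescaled so that $r^2$ maps to an affine function of $\eta$ (resp.\ $\tau$). Keeping these separate, and confirming that the harmonic prefactor cancels cleanly on both sides, is the only step where a sign or power-of-$\rho$ slip could occur.
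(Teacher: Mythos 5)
Your proof is correct and takes essentially the same approach as the paper: express $r^2$ as an affine function of the recurrence variable ($\eta$ for the disk, $\tau$ for the annulus) and apply the Jacobi matrix relation $x\,{\bf Q}(x) = {\bf Q}(x)X$. The only difference is one of presentation — the paper proves the annulus identity on the reference cell $\Omega_\rho$ and defers the general cell to ``a scaling argument'', whereas you carry out that scaling explicitly by tracking the $\rho^{-m}$ (resp.\ $\rho_2^{-m}$) factor picked up by the harmonic polynomial, which is precisely the bookkeeping the paper leaves implicit.
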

\begin{proof}
We prove  \cref{eq:r2-2} and note that  \cref{eq:r2-1} follows similarly. Consider $\tau = t(1-r^2)$. Then, by utilizing \cref{eq:tau},
\begin{align}
\begin{split}
r^2 {\bf Z}_{m,j}^{\rho,(a,b)}(x,y) 
&= r^2 Y_{m,j}(x,y) {\bf Q}^{t,(a,b,m)}(\tau) \\
&=    Y_{m,j}(x,y) (1-t^{-1}\tau) {\bf Q}^{t,(a,b,m)}(\tau) \\
&=    Y_{m,j}(x,y) {\bf Q}^{t,(a,b,m)}(\tau) (I - t^{-1} X_{t,(a,b,m)}) \\
&=   {\bf Z}_{m,j}^{\rho,(a,b)}(x,y)  (I - t^{-1} X_{t,(a,b,m)}).
\end{split}
\end{align}
Thus \cref{eq:r2-2} holds when $\rho_1 = \rho$ and $\rho_2 = 1$. The result follows for a general annular cell $K_1$ with a scaling argument.
\end{proof}

Leveraging \cref{lem:jacobi-matrix-r2} we now describe how one discretizes a rotationally invariant Helmholtz coefficient $\lambda(r^2)$. The Helmholtz coefficient is expanded over each cell in the mesh independently. 

\begin{definition}
\label{def:scaled-ChebyshevT}
We define $\{T_n\}_{n \in \mathbb{N}_0}$ as Chebyshev polynomials of the first kind \cite[Sec.~18.3]{dlmf} and $T^{[a,b]}_n(x) \coloneqq T_n((2x-a-b)/(b-a))$. In other words $\{T^{[a,b]}_n\}_{n \in \mathbb{N}_0}$ are Chebyshev polynomials of the first kind scaled to the interval $[a,b]$.
\end{definition}

\begin{theorem}[Weighted mass matrix: disk]
\label{th:assembly:disk}
Consider the unit disk domain $\Omega_0$ and let $\{T^{[0,1]}_n\}_{n \in \mathbb{N}_0}$ be the Chebyshev polynomials scaled to the interval $[0,1]$ as defined in \cref{def:scaled-ChebyshevT}.
\begin{enumerate}
\itemsep=0pt
\item Consider the expansion: $\lambda(r^2)|_{\Omega_0} = \sum_{n=0}^\infty \lambda_n T^{[0,1]}_n(r^2)$;
\item Let $\Lambda_m = \sum_{n=0}^\infty \lambda_n T^{[0,1]}_n((I+X_{(0,m)})/2)$.
\end{enumerate}
Then
\begin{align}
\langle ({\bm \Phi}^{\Omega_0}_{m,j})^\top , \lambda(r^2) {\bm \Phi}^{\Omega_0}_{m,j} \rangle_{L^2(\Omega_0)} 
= \frac{\pi_m}{2^{m+2}} (R^{\Omega_0}_m)^\top \Lambda_m R^{\Omega_0}_m.
\end{align}
\end{theorem}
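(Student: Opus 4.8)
The plan is to reduce the weighted inner product to the orthonormal Zernike basis ${\bf Z}^{(0)}_{m,j}$ via the raising operator of \cref{prop:raising}, and then to show that multiplication by $\lambda(r^2)$ is represented, acting on the right, by the matrix $\Lambda_m$. Concretely, \cref{prop:raising} gives ${\bm \Phi}^{\Omega_0}_{m,j} = {\bf Z}^{(0)}_{m,j} R^{\Omega_0}_m$, and since $R^{\Omega_0}_m$ is a constant matrix it factors out of the quasimatrix inner product:
\begin{align}
\langle ({\bm \Phi}^{\Omega_0}_{m,j})^\top, \lambda(r^2) {\bm \Phi}^{\Omega_0}_{m,j} \rangle_{L^2(\Omega_0)} = (R^{\Omega_0}_m)^\top \langle ({\bf Z}^{(0)}_{m,j})^\top, \lambda(r^2) {\bf Z}^{(0)}_{m,j} \rangle_{L^2(\Omega_0)} R^{\Omega_0}_m .
\end{align}
It therefore suffices to prove that $\langle ({\bf Z}^{(0)}_{m,j})^\top, \lambda(r^2) {\bf Z}^{(0)}_{m,j} \rangle_{L^2(\Omega_0)} = \frac{\pi_m}{2^{m+2}} \Lambda_m$.

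The crux is a functional-calculus identity. Writing $J \coloneqq (I + X_{(0,m)})/2$, the $\rho = 1$, $a = 0$ case of \cref{lem:jacobi-matrix-r2} reads $r^2 {\bf Z}^{(0)}_{m,j} = {\bf Z}^{(0)}_{m,j} J$. I would first prove by induction on $k$ that $r^{2k} {\bf Z}^{(0)}_{m,j} = {\bf Z}^{(0)}_{m,j} J^k$ for all $k \in \mathbb{N}_0$: the base case is trivial, and in the inductive step the factor $J^k$ is a constant matrix, so $r^{2(k+1)} {\bf Z}^{(0)}_{m,j} = r^2({\bf Z}^{(0)}_{m,j} J^k) = (r^2 {\bf Z}^{(0)}_{m,j}) J^k = {\bf Z}^{(0)}_{m,j} J^{k+1}$. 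By linearity this extends to $p(r^2){\bf Z}^{(0)}_{m,j} = {\bf Z}^{(0)}_{m,j}\, p(J)$ for every polynomial $p$, and in particular to each Chebyshev term $T^{[0,1]}_n$. Summing against the coefficients $\lambda_n$ then yields $\lambda(r^2){\bf Z}^{(0)}_{m,j} = {\bf Z}^{(0)}_{m,j}\Lambda_m$ with $\Lambda_m = \sum_n \lambda_n T^{[0,1]}_n(J)$, exactly the matrix in the statement.

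With this identity in hand, the result follows by orthonormality. The Zernike orthogonality relation with $a = 0$ gives $\langle ({\bf Z}^{(0)}_{m,j})^\top, {\bf Z}^{(0)}_{m,j}\rangle_{L^2(\Omega_0)} = \frac{\pi_m}{2^{m+2}} I$, exactly the computation already used in the proof of \cref{th:mass}. Pulling the constant matrix $\Lambda_m$ out of the inner product on the right,
\begin{align}
\langle ({\bf Z}^{(0)}_{m,j})^\top, \lambda(r^2){\bf Z}^{(0)}_{m,j}\rangle_{L^2(\Omega_0)} = \langle ({\bf Z}^{(0)}_{m,j})^\top, {\bf Z}^{(0)}_{m,j}\rangle_{L^2(\Omega_0)}\, \Lambda_m = \frac{\pi_m}{2^{m+2}}\Lambda_m ,
\end{align}
and substituting into the first display completes the argument.

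The main obstacle is analytic rather than algebraic: passing from polynomials to the full series $\lambda = \sum_n \lambda_n T^{[0,1]}_n$. I would argue entrywise. Since $X_{(0,m)}$ is bounded and self-adjoint with spectrum $[-1,1]$, the matrix $J$ is self-adjoint with spectrum in $[0,1]$, whence $\|T^{[0,1]}_n(J)\|_{\mathrm{op}} \le \sup_{[0,1]}|T^{[0,1]}_n| = 1$ and every entry of $T^{[0,1]}_n(J)$ is bounded by $1$ in modulus. Moreover $J$ is tridiagonal, so $T^{[0,1]}_n(J)$ is banded of bandwidth $n$; hence each fixed entry of $\Lambda_m = \sum_n \lambda_n T^{[0,1]}_n(J)$ receives contributions only from $n \ge |i-k|$ and forms an absolutely convergent series whenever $\sum_n |\lambda_n| < \infty$ (e.g.\ for $\lambda$ of bounded variation or smoother). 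One may then either interchange the summation with the $L^2$ pairing directly, or truncate $\lambda$ to a polynomial, apply the exact polynomial identity, and pass to the limit.
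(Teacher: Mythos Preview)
Your proof is correct and follows exactly the same route as the paper: factor through ${\bf Z}^{(0)}_{m,j}$ via \cref{prop:raising}, replace multiplication by $\lambda(r^2)$ by right-multiplication by $\Lambda_m$ using \cref{lem:jacobi-matrix-r2}, and finish with the orthonormality of ${\bf Z}^{(0)}_{m,j}$. The paper's proof is a three-line computation that leaves both the polynomial functional-calculus step $p(r^2){\bf Z}^{(0)}_{m,j} = {\bf Z}^{(0)}_{m,j}\,p(J)$ and the passage to the infinite Chebyshev series implicit, so your induction and entrywise convergence argument simply supply detail the paper omits.
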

\begin{proof}
Note that
\begin{align}
\begin{split}
&\langle ({\bm \Phi}^{\Omega_0}_{m,j})^\top , \lambda(r^2) {\bm \Phi}^{\Omega_0}_{m,j} \rangle_{L^2(\Omega_0)} 
= (R^{\Omega_0}_m)^\top\langle ({\bf Z}^{(0)}_{m,j})^\top , (\sum_{n=0}^\infty \lambda_n T^{[0,1]}_n(r^2)) {\bf Z}^{(0)}_{m,j} \rangle_{L^2(\Omega_0)} R^{\Omega_0}_m\\
&\indent =(R^{\Omega_0}_m)^\top \langle ({\bf Z}^{(0)}_{m,j})^\top  {\bf Z}^{(0)}_{m,j} \rangle_{L^2(\Omega_0)} \Lambda_m R^{\Omega_0}_m = \frac{\pi_m}{2^{m+2}} (R^{\Omega_0}_m)^\top \Lambda_m R^{\Omega_0}_m.
\end{split}
\end{align}
The first equality holds thanks to \cref{prop:raising} and the second equality holds thanks to \cref{lem:jacobi-matrix-r2}.
\end{proof}

\begin{theorem}[Weighted mass matrix: annulus]
Fix the annulus domain $\Omega_\rho$, $\rho>0$ and let $\{T^{[\rho,1]}_n\}_{n \in \mathbb{N}_0}$ be the Chebyshev polynomials scaled to the interval $[\rho,1]$ as defined in \cref{def:scaled-ChebyshevT}.
\begin{enumerate}
\itemsep=0pt
\item Consider the expansion: $\lambda(r^2)|_{\Omega_\rho} = \sum_{n=0}^\infty \lambda_n T^{[\rho,1]}_n(r^2)$;
\item Let $\Lambda_m = \sum_{n=0}^\infty \lambda_n T^{[\rho,1]}_n(\rho^2 (I-t^{-1}X_{t,(0,0,m)}))$.
\end{enumerate}
Then
\begin{align}
\langle ({\bm \Phi}^{\Omega_\rho}_{m,j})^\top , \lambda(r^2) ({\bm \Phi}^{\Omega_\rho}_{m,j}) \rangle_{L^2(\Omega_\rho)} 
= \frac{\pi_m}{2t^{m+1}} (R^{\Omega_\rho}_m)^\top \Lambda_m R^{\Omega_0}_m.
\end{align}
\end{theorem}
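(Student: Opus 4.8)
The plan is to mirror the proof of the disk case (\cref{th:assembly:disk}) verbatim, simply swapping the disk raising operator and the disk Jacobi-matrix relation for their annular counterparts. Three ingredients are needed: the raising identity ${\bm \Phi}^{\Omega_\rho}_{m,j} = {\bf Z}^{\rho,(0,0)}_{m,j} R^{\Omega_\rho}_m$ from \cref{prop:raising}; the right-action of $r^2$ on the annular Zernike quasimatrix from \cref{lem:jacobi-matrix-r2}; and the normalization $\langle ({\bf Z}^{\rho,(0,0)}_{m,j})^\top, {\bf Z}^{\rho,(0,0)}_{m,j}\rangle_{L^2(\Omega_\rho)} = \tfrac{\pi_m}{2t^{m+1}} I$, which was already isolated inside the proof of \cref{th:mass}.

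First I would substitute the raising identity into the target inner product and pull the constant matrices $R^{\Omega_\rho}_m$ outside the $L^2(\Omega_\rho)$ pairing, reducing the claim to evaluating $\langle ({\bf Z}^{\rho,(0,0)}_{m,j})^\top, \lambda(r^2)\,{\bf Z}^{\rho,(0,0)}_{m,j}\rangle_{L^2(\Omega_\rho)}$. This is exactly the same bookkeeping step as in the disk proof, since $R^{\Omega_\rho}_m$ is independent of $(x,y)$.

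The crux is to show that multiplication by $\lambda(r^2)$ acts on the quasimatrix from the right as $\Lambda_m$, that is, $\lambda(r^2)\,{\bf Z}^{\rho,(0,0)}_{m,j} = {\bf Z}^{\rho,(0,0)}_{m,j}\,\Lambda_m$. By \cref{lem:jacobi-matrix-r2}, $r^2\,{\bf Z}^{\rho,(0,0)}_{m,j}$ equals ${\bf Z}^{\rho,(0,0)}_{m,j}$ times precisely the Jacobi-matrix factor that appears as the argument of $T^{[\rho,1]}_n$ in the definition of $\Lambda_m$. Because this is a homomorphism sending multiplication by $r^2$ to right-multiplication by a fixed matrix $J_m$, iterating gives $(r^2)^k\,{\bf Z}^{\rho,(0,0)}_{m,j} = {\bf Z}^{\rho,(0,0)}_{m,j}\,J_m^{\,k}$ for every $k$, hence $p(r^2)\,{\bf Z}^{\rho,(0,0)}_{m,j} = {\bf Z}^{\rho,(0,0)}_{m,j}\,p(J_m)$ for any polynomial $p$. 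Applying this to each Chebyshev polynomial $T^{[\rho,1]}_n$ and summing over the expansion of $\lambda$ reproduces $\Lambda_m$ acting on the right. Substituting the normalization then yields $\langle ({\bf Z}^{\rho,(0,0)}_{m,j})^\top, \lambda(r^2){\bf Z}^{\rho,(0,0)}_{m,j}\rangle_{L^2(\Omega_\rho)} = \tfrac{\pi_m}{2t^{m+1}}\Lambda_m$, and reinstating the raising matrices gives $\tfrac{\pi_m}{2t^{m+1}}(R^{\Omega_\rho}_m)^\top \Lambda_m\, R^{\Omega_\rho}_m$.

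The main obstacle is the functional-calculus step with an infinite Chebyshev series: the homomorphism identity is immediate when $\lambda$ is a polynomial in $r^2$ (the sum is finite), so the remaining issue is justifying the term-by-term passage to the limit for a general $\lambda$. I would handle this either by noting that in practice $\lambda$ is resolved to finite degree, or by invoking uniform convergence of the Chebyshev expansion on the compact range of $r^2$ together with boundedness of the polynomials against the orthogonality weight, exactly as is tacitly done in the disk case.
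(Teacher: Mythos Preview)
Your proposal is correct and is exactly the approach the paper takes: its entire proof reads ``The result follows analogously to the proof of \cref{th:assembly:disk}.'' You have simply spelled out that analogy using \cref{prop:raising}, \cref{lem:jacobi-matrix-r2}, and the normalization from \cref{th:mass}, with some added care about the functional-calculus limit that the paper leaves implicit.
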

\begin{proof}
The result follows analogously to the proof of \cref{th:assembly:disk}
\end{proof}

\begin{remark}
In practice we truncate the scaled Chebyshev expansion at some degree $N$ and utilize the Clenshaw algorithm to compute the matrices $\Lambda_m$. 
\end{remark}

\subsubsection{Rotationally anisotropic coefficients}
\label{sec:non-separable}

If $\lambda$ is rotationally anisotropic (it cannot be written as $\lambda(r)$) but it can be represented by a piecewise low-order polynomial in Cartesian coordinates, we may still recover a sparse discretization. However, a rotationally anisotropic coefficient means the PDE operator is now non-separable. In other words, the operator does not decouple across Fourier modes and thus the resulting linear system is not block diagonal.

We confine the discussion to discretizing $\lambda(x,y) = x$ on an annular cell. Discretizing on a disk cell follows similarly. For a more complex coefficient, one expands $\lambda(x,y)$ in a tensor-product Chebyshev expansion in $x$ and $y$ and the linear system is assembled akin to \cref{sec:assembly}.

We first partially quote a proposition from \cite{Papadopoulos2023}.

\begin{proposition}[Proposition 4.8 in \cite{Papadopoulos2023}, $x$-Jacobi matrix]
\label{prop:jacobi-matrix-x}
Let $t = (1-\rho^2)^{-1}$ and $R^{t,(a,b,c+1)}_{(a,b,c)}$ denote the raising matrix for the orthonormal semiclassical Jacobi families \newline ${\bf Q}^{t,(a,b,c)} = {\bf Q}^{t,(a,b,c+1)}  R^{t,(a,b,c+1)}_{(a,b,c)}$. Then
\begin{align}
x {\bf Z}_{0,1}^{\rho,(a,b)}(x,y) &= {\bf Z}_{1,1}^{\rho,(a,b)}(x,y) R^{t,(a,b,1)}_{(a,b,0)},\\
x {\bf Z}_{1,0}^{\rho,(a,b)}(x,y) &= \frac{1}{2} {\bf Z}_{2,0}^{\rho,(a,b)}(x,y) R^{t,(a,b,2)}_{(a,b,1)},
\end{align}
and for, $m \geq 1$, $(m,j) \neq (1,0)$,
\begin{align}
x {\bf Z}_{m,j}^{\rho,(a,b)}(x,y) =  \frac{1}{2} \left[t^{-1} {\bf Z}_{m-1,j}^{\rho,(a,b)}(x,y) (R^{t,(a,b,m)}_{(a,b,m-1)})^\top + {\bf Z}_{m+1,j}^{\rho,(a,b)}(x,y) R^{t,(a,b,m+1)}_{(a,b,m)}\right].
\label{eq:x1}
\end{align}
\end{proposition}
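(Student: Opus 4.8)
The plan is to decouple the angular (Fourier) and radial (semiclassical Jacobi) behaviour of the multiplication operator. Writing $x = r\cos\theta$ and recalling that $Z^{\rho,(a,b)}_{n,m,j} = Y_{m,j}\, Q^{t,(a,b,m)}_{(n-m)/2}(\tau)$ with $Y_{m,j} = r^m(\cos\text{ or }\sin)(m\theta)$, I would first apply the product-to-sum identities $\cos\theta\cos(m\theta) = \tfrac12[\cos((m{+}1)\theta)+\cos((m{-}1)\theta)]$ and $\cos\theta\sin(m\theta) = \tfrac12[\sin((m{+}1)\theta)+\sin((m{-}1)\theta)]$. Multiplying through by $r^{m+1}$ and using $r^{m+1}(\cos\text{ or }\sin)((m\pm1)\theta) = r^{1\mp1}Y_{m\pm1,j}$ collapses everything into the single clean identity $x\,Y_{m,j} = \tfrac12\big[Y_{m+1,j} + r^2 Y_{m-1,j}\big]$, valid for both the cosine and sine families. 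The two degenerate cases in the statement fall out immediately: when $m=0$ only the $Y_{1,1}$ term survives (there is no $(m{-}1)$ Fourier mode), and when $(m,j)=(1,0)$ the lower term contains $\sin(0\cdot\theta)\equiv 0$, leaving only $\tfrac12 Y_{2,0}$.

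Next I would treat the radial factors, which is where the raising matrices enter. Since the new Fourier modes $m\pm1$ force the third semiclassical parameter to move from $c=m$ to $c=m\pm1$, I rewrite the upper term $Y_{m+1,j}\,{\bf Q}^{t,(a,b,m)}$ via ${\bf Q}^{t,(a,b,m)} = {\bf Q}^{t,(a,b,m+1)} R^{t,(a,b,m+1)}_{(a,b,m)}$, recovering $\tfrac12 {\bf Z}^{\rho,(a,b)}_{m+1,j} R^{t,(a,b,m+1)}_{(a,b,m)}$ after reattaching the harmonic factor $Y_{m+1,j}$. For the lower term I use $r^2 = t^{-1}(t-\tau)$ from \cref{eq:tau}, turning the factor $r^2$ into a weighted lowering. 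The key auxiliary identity is the adjoint relation $(t-\tau){\bf Q}^{t,(a,b,m)}(\tau) = {\bf Q}^{t,(a,b,m-1)}(\tau)\,(R^{t,(a,b,m)}_{(a,b,m-1)})^\top$, which I would prove by computing the matrix entries of both sides against the $(a,b,m{-}1)$ weight: because multiplication by $(t-\tau)$ converts that weight into the $(a,b,m)$ weight, the left-hand entries equal $\langle Q^{t,(a,b,m)}_j, Q^{t,(a,b,m-1)}_k\rangle_{(a,b,m)}$, which are exactly the transposed entries of the orthonormal connection matrix $R^{t,(a,b,m)}_{(a,b,m-1)}$. Combining, the lower term becomes $\tfrac12 t^{-1} {\bf Z}^{\rho,(a,b)}_{m-1,j} (R^{t,(a,b,m)}_{(a,b,m-1)})^\top$, and adding the two pieces gives \cref{eq:x1}; the two special cases are then finished by the same radial step with only the raising-matrix term present, producing $R^{t,(a,b,1)}_{(a,b,0)}$ and $\tfrac12 R^{t,(a,b,2)}_{(a,b,1)}$ respectively.

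The main obstacle I anticipate is the radial adjoint identity together with its index bookkeeping: one must verify that multiplication by the weight factor $(t-\tau)$ corresponds precisely to the \emph{transpose} of the orthonormal raising (connection) matrix rather than to the lowering connection itself, and one must confirm that the quasimatrix column degrees $n = m, m+2, \dots$ shift consistently when $m$ moves by $\pm1$, so that the parity of $n-m$ is preserved and the columns of $x\,{\bf Z}^{\rho,(a,b)}_{m,j}$ align with those of ${\bf Z}^{\rho,(a,b)}_{m\pm1,j}$. Everything else — the trigonometric algebra and the substitution $r^2 = t^{-1}(t-\tau)$ — is routine once these two points are pinned down.
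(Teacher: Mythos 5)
Your proposal is correct, but there is nothing in this paper to compare it against: the paper does not prove \cref{prop:jacobi-matrix-x} at all, it is imported verbatim (``We first partially quote a proposition from \cite{Papadopoulos2023}''), so the only in-paper ``proof'' is a citation. Taken on its own merits, your reconstruction is sound and is the natural argument. The decomposition $x Y_{m,j} = \tfrac12\bigl[Y_{m+1,j} + r^2 Y_{m-1,j}\bigr]$ is valid for both Fourier signs, the substitution $r^2 = t^{-1}(t-\tau)$ from \cref{eq:tau} correctly converts the lower term into a multiplication by $(t-\tau)$ in the radial variable, and your adjoint identity $(t-\tau)\,{\bf Q}^{t,(a,b,m)}(\tau) = {\bf Q}^{t,(a,b,m-1)}(\tau)\,(R^{t,(a,b,m)}_{(a,b,m-1)})^\top$ is exactly right: since both families are \emph{orthonormal}, the expansion coefficients of $(t-x)Q^{t,(a,b,m)}_n$ in the $(a,b,m{-}1)$ family and the entries of the raising matrix $R^{t,(a,b,m)}_{(a,b,m-1)}$ are the same weighted integral $\int_0^1 Q^{t,(a,b,m-1)}_k Q^{t,(a,b,m)}_n\, x^a(1-x)^b(t-x)^m\,\dx$ with the two indices swapped, which is precisely why the transpose (and not a separate lowering matrix) appears in \cref{eq:x1}. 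Your bookkeeping worry also resolves cleanly: multiplication by $x$ raises the total degree $n=m+2k$ by one, and in the $m\pm1$ families this corresponds to radial degrees $k$ and $k+1$ respectively, consistent with the bandwidths of $R$ and $R^\top$.

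One genuine imprecision to fix is your handling of the $m=0$ case. You write that ``only the $Y_{1,1}$ term survives (there is no $(m-1)$ Fourier mode)'', which, read literally as dropping the lower term of $\tfrac12[Y_{m+1,j}+r^2Y_{m-1,j}]$, would give $x Y_{0,1} = \tfrac12 Y_{1,1}$ and hence a spurious factor $\tfrac12$ in front of $R^{t,(a,b,1)}_{(a,b,0)}$ --- contradicting the first identity of the proposition, and also contradicting your own final answer, which has coefficient one. The correct mechanism is not that the lower term vanishes but that it \emph{folds onto} the upper term: since $\cos(-\theta)=\cos\theta$, one has $r^2 Y_{-1,1} = Y_{1,1}$, so the two halves combine to give $x Y_{0,1} = Y_{1,1}$ exactly. (Contrast this with $(m,j)=(1,0)$, where the lower term genuinely vanishes because $\sin(0\cdot\theta)\equiv 0$, which is why that case \emph{does} retain the factor $\tfrac12$.) Your final formulae are correct in both degenerate cases, so this is a flaw in the stated reasoning rather than in the result, but as written the explanation and the conclusion for $m=0$ are inconsistent with each other.
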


Inspired by \cref{prop:jacobi-matrix-x}, we define the $x$-Jacobi matrix $X^{\Omega_\rho}$ as
\begin{align}
X^{\Omega_\rho}  =
\begin{pmatrix}
& & \frac{1}{2t} (R^{t,(0,0,1)}_{(0,0,0)})^\top & & \\
& & & \frac{1}{2t} (R^{t,(0,0,2)}_{(0,0,1)})^\top &\\
R^{t,(0,0,1)}_{(0,0,0)} & & & & \ddots \\
& \frac{1}{2} R^{t,(0,0,2)}_{(0,0,1)} & & &\\
& & \ddots & &
\end{pmatrix}
\end{align}
such that
\begin{align}
x 
\begin{pmatrix}
{\bf Z}^{\rho,(0,0)}_{0,1} & {\bf Z}^{\rho,(0,0)}_{1,0} & {\bf Z}^{\rho,(0,0)}_{1,1} & \cdots
\end{pmatrix}
=
\begin{pmatrix}
{\bf Z}^{\rho,(0,0)}_{0,1} & {\bf Z}^{\rho,(0,0)}_{1,0} & {\bf Z}^{\rho,(0,0)}_{1,1} & \cdots
\end{pmatrix}X^{\Omega_\rho}.
\end{align} 

\begin{proposition}
\label{prop:x-Jacobi}
Let $R^{\Omega_\rho}_m$ denote the raising operators defined in \cref{prop:raising}. Define
\begin{align}
R^{\Omega_\rho}
\coloneqq \begin{pmatrix}
R^{\Omega_\rho}_0 & & \\
& R^{\Omega_\rho}_1  &\\
& &  \ddots
\end{pmatrix},
\;\;
D^{\Omega_\rho} \coloneqq 
\begin{pmatrix}
\mathrm{diag}(\frac{\pi_m}{2^2}) & &\\
& \mathrm{diag}(\frac{\pi_m}{2^4})   &\\
& & \ddots
\end{pmatrix},
\end{align}
such that $\Phi^{\Omega_\rho}(x,y)= {\bf Z}^{\rho, (0,0)}(x,y) R^{\Omega_\rho}$ and $\langle ({\bf Z}^{\rho, (0,0)})^\top, {\bf Z}^{\rho, (0,0)} \rangle_{L^2(\Omega_\rho)} = D^{\Omega_\rho}$. Then 
\begin{align}
\langle (\Phi^{\Omega_\rho})^\top, x \Phi^{\Omega_\rho} \rangle_{L^2(\Omega_\rho)}
= (R^{\Omega_\rho})^\top D^{\Omega_\rho} X^{\Omega_\rho} R^{\Omega_\rho}.
\end{align}
\end{proposition}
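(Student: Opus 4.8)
The plan is to prove the identity by direct substitution, using the three ingredients already assembled in the statement: the factorization $\Phi^{\Omega_\rho} = {\bf Z}^{\rho,(0,0)} R^{\Omega_\rho}$, the diagonal Gram matrix $\langle ({\bf Z}^{\rho,(0,0)})^\top, {\bf Z}^{\rho,(0,0)} \rangle_{L^2(\Omega_\rho)} = D^{\Omega_\rho}$, and the $x$-Jacobi relation $x\,{\bf Z}^{\rho,(0,0)} = {\bf Z}^{\rho,(0,0)} X^{\Omega_\rho}$ that is the defining property of $X^{\Omega_\rho}$.

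First I would substitute the factorization into both arguments of the pairing. Because $R^{\Omega_\rho}$ is a constant (coordinate-independent) matrix, it factors out of the $L^2$ inner product on each side, leaving
\[
\langle (\Phi^{\Omega_\rho})^\top, x\, \Phi^{\Omega_\rho} \rangle_{L^2(\Omega_\rho)} = (R^{\Omega_\rho})^\top \langle ({\bf Z}^{\rho,(0,0)})^\top, x\, {\bf Z}^{\rho,(0,0)} \rangle_{L^2(\Omega_\rho)} R^{\Omega_\rho}.
\]
Next I would apply the $x$-Jacobi relation to the right argument, replacing $x\,{\bf Z}^{\rho,(0,0)}$ by ${\bf Z}^{\rho,(0,0)} X^{\Omega_\rho}$ and pulling the constant matrix $X^{\Omega_\rho}$ out to the right of the pairing, which turns the middle factor into $\langle ({\bf Z}^{\rho,(0,0)})^\top, {\bf Z}^{\rho,(0,0)} \rangle_{L^2(\Omega_\rho)} X^{\Omega_\rho}$. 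Finally I would invoke orthogonality to replace this Gram matrix by $D^{\Omega_\rho}$, yielding exactly $(R^{\Omega_\rho})^\top D^{\Omega_\rho} X^{\Omega_\rho} R^{\Omega_\rho}$.

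The computation is essentially routine; the only point requiring care is the bookkeeping of multiplication order in the quasimatrix algebra. One must justify that, for a constant matrix $C$, the pairing satisfies $\langle ({\bf Z}C)^\top, \Psi\rangle = C^\top \langle {\bf Z}^\top, \Psi\rangle$ and its right-hand analogue $\langle {\bf Z}^\top, \Psi C\rangle = \langle {\bf Z}^\top, \Psi\rangle C$; both follow immediately from bilinearity together with the entrywise definition of $\langle {\bm \Phi}^\top, {\bm \Psi}\rangle$ recorded earlier. The only structural subtlety is that $X^{\Omega_\rho}$ acts on the \emph{full} Fourier-stacked quasimatrix ${\bf Z}^{\rho,(0,0)}$ and couples modes $m$ and $m\pm 1$, whereas $R^{\Omega_\rho}$ and $D^{\Omega_\rho}$ are block-diagonal in the Fourier index; since the displayed identity is stated at the level of these global quasimatrices, no per-mode reordering is needed and the three matrices simply compose. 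As a consistency check, multiplication by the real function $x$ is self-adjoint, so $\langle (\Phi^{\Omega_\rho})^\top, x\,\Phi^{\Omega_\rho}\rangle$ is symmetric; this forces $D^{\Omega_\rho} X^{\Omega_\rho}$ to be symmetric, which one can verify directly against the block definitions of $D^{\Omega_\rho}$ and $X^{\Omega_\rho}$.
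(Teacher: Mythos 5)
Your proposal is correct and is essentially identical to the paper's own proof: substitute $\Phi^{\Omega_\rho} = {\bf Z}^{\rho,(0,0)} R^{\Omega_\rho}$, pull the constant matrices out of the $L^2$ pairing, apply the defining relation $x\,{\bf Z}^{\rho,(0,0)} = {\bf Z}^{\rho,(0,0)} X^{\Omega_\rho}$, and replace the Gram matrix by $D^{\Omega_\rho}$. The extra remarks on quasimatrix bookkeeping and the symmetry consistency check are sound additions but do not change the argument.
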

\begin{proof}
We note that
\begin{align}
\langle (\Phi^{\Omega_\rho})^\top, x \Phi^{\Omega_\rho} \rangle_{L^2(\Omega_\rho)}
&= (R^{\Omega_\rho})^\top \langle  ({\bf Z}^{\rho, (0,0)})^\top, x  {\bf Z}^{\rho, (0,0)} \rangle_{L^2(\Omega_\rho)} R^{\Omega_\rho}\\
&=(R^{\Omega_\rho})^\top \langle  ({\bf Z}^{\rho, (0,0)})^\top,  {\bf Z}^{\rho, (0,0)} \rangle_{L^2(\Omega_\rho)} X^{\Omega_\rho} R^{\Omega_\rho}\\
&= (R^{\Omega_\rho})^\top D^{\Omega_\rho} X^{\Omega_\rho} R^{\Omega_\rho}.
\end{align}
\end{proof}
The result in \cref{prop:x-Jacobi} then allows us to assemble the FEM matrices associated with anisotropic coefficients as further demonstrated in \cref{sec:examples:non-separable}.

\subsection{The load vector}
\label{sec:load-vector}
In order to construct the load vector ${\bf b}$ in \cref{eq:helmholtz-LA}, we are required to test the data against a test function $\langle f,v\rangle_{L^2(\Omega)}$ for all $v \in H^1_0(\Omega)$ where the test function has been expanded in the basis $\Phi$. We choose to expand the right-hand side in the discontinuous Zernike (annular) polynomial basis orthogonal with respect to the Lebesgue measure,  which can be achieved in quasi-optimal complexity.  More precisely,
consider the mesh $\mathcal{T}_h = \{\bar K_j\}_{j=0}^{N_h-1}$ where $K_j = \Omega_{\rho_j, \rho_{j+1}}$, $0 = \rho_0 < \rho_1 < \cdots < \rho_{N_h}$ and the quasimatrix:
\begin{align}
{\bm \Psi}^{\mathcal{T}_h}_{m,j} \coloneqq
\begin{pmatrix}
{\bf Z}^{K_0, (0)}_{m,j} & {\bf Z}^{K_1, (0,0)}_{m,j} & \cdots &{\bf Z}^{K_{N_h-1}, (0,0)}_{m,j}
\end{pmatrix},
\end{align}
such that  ${\bm \Psi}^{\mathcal{T}_h} \coloneqq \begin{pmatrix} {\bm \Psi}^{\mathcal{T}_h}_{0,1} & {\bm \Psi}^{\mathcal{T}_h}_{1,0} & \cdots \end{pmatrix}$. Then we expand $f(x,y)$ in \cref{eq:helmholtz} as $f(x,y) = {\bm \Psi}^{\mathcal{T}_h}(x,y) {\bf f}$ and fix ${\bf b} = \langle ({\bm \Phi}^{\mathcal{T}_h})^\top, {\bm \Psi}^{\mathcal{T}_h}\rangle_{L^2(\Omega)} {\bf f}$. The entries in the matrix $G_{\Phi, \Psi} \coloneqq \langle ({\bm \Phi}^{\mathcal{T}_h})^\top, {\bm \Psi}^{\mathcal{T}_h}\rangle_{L^2(\Omega)}$ may be computed thanks to the following proposition.

\begin{proposition}[Load vector]
Consider the unit disk domain $\Omega_0$ and the annulus domain $\Omega_\rho$, $\rho > 0$. Entries in $G_{\Phi, \Psi}$ associated with basis functions on different cells in the mesh are zero. Similarly, if either $m \neq \mu$ or $j \neq \zeta$, then
\begin{align}
\begin{split}
\langle ({\bm \Phi}^{\Omega_0}_{m,j})^\top,  {\bf Z}^{\Omega_0, (0)}_{\mu,\zeta} \rangle_{L^2(\Omega_0)} = \langle ({\bm \Phi}^{\Omega_\rho}_{m,j})^\top,  {\bf Z}^{\Omega_\rho, (0,0)}_{\mu,\zeta} \rangle_{L^2(\Omega_\rho)} &= {\bf 0},
\end{split}\label{eq:different-fourier:load-vector}
\end{align}
where $\vectt{0}$ denotes the infinite-dimensional matrix of zeroes. Moreover,
\begin{align}
\langle ({\bm \Phi}^{\Omega_0}_{m,j})^\top,  {\bf Z}^{\Omega_0, (0)}_{m,j} \rangle_{L^2(\Omega_0)} &= \frac{\pi_m}{2^{m+2}}(R^{\Omega_0}_m)^\top, \label{eq:load-vector-disk}\\
\langle ({\bm \Phi}^{\Omega_\rho}_{m,j})^\top,  {\bf Z}^{\Omega_\rho, (0,0)}_{m,j} \rangle_{L^2(\Omega_\rho)} &=  \frac{\pi_m}{2t^{m+1}} (R^{\Omega_\rho}_m)^\top.
\end{align}
\end{proposition}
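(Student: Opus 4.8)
The plan is to reduce all three assertions to the two structural facts already in hand: the raising-operator factorizations of the hierarchical basis from \cref{prop:raising}, together with the orthogonality of the Zernike (annular) polynomials. As a preliminary bookkeeping step I would note that on the reference cells the radial affine transformation is the identity (taking $\rho=1$ in the disk case and $\rho_2=1$ in the annular case), so that ${\bf Z}^{\Omega_0,(0)}_{m,j} = {\bf Z}^{(0)}_{m,j}$ and ${\bf Z}^{\Omega_\rho,(0,0)}_{m,j} = {\bf Z}^{\rho,(0,0)}_{m,j}$. The task is therefore to evaluate $\langle ({\bm \Phi}^{\Omega_0}_{m,j})^\top,\, {\bf Z}^{(0)}_{\mu,\zeta}\rangle_{L^2(\Omega_0)}$ and its annular analogue.

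For the decoupling across cells, I would argue by supports: each column of ${\bm \Psi}^{\mathcal{T}_h}$ is a discontinuous Zernike (annular) polynomial supported on a single cell, whereas a bubble function lives on one cell and a hat function on two adjacent cells. Any pairing between functions whose supports meet only along a shared circle integrates to zero over $\Omega$, giving the block-by-cell structure. For the decoupling across Fourier modes I would reuse verbatim the angular argument from the proofs of \cref{th:mass} and the stiffness theorem: both factors carry an angular term $\sin(m\theta+j\pi/2)$ and $\sin(\mu\theta+\zeta\pi/2)$, and $\int_0^{2\pi}\sin(m\theta+j\pi/2)\sin(\mu\theta+\zeta\pi/2)\,\mathrm{d}\theta = 0$ whenever $m\neq\mu$ or $j\neq\zeta$, which establishes \cref{eq:different-fourier:load-vector}.

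The core computation is the diagonal case $m=\mu$, $j=\zeta$. Substituting the factorization ${\bm \Phi}^{\Omega_0}_{m,j} = {\bf Z}^{(0)}_{m,j} R^{\Omega_0}_m$ from \cref{prop:raising} and pulling the constant matrix out of the inner product gives
\begin{align*}
\langle ({\bm \Phi}^{\Omega_0}_{m,j})^\top, {\bf Z}^{(0)}_{m,j}\rangle_{L^2(\Omega_0)} = (R^{\Omega_0}_m)^\top \langle ({\bf Z}^{(0)}_{m,j})^\top, {\bf Z}^{(0)}_{m,j}\rangle_{L^2(\Omega_0)}.
\end{align*}
The orthogonality relation for the unweighted ($a=0$) Zernike polynomials yields $\langle ({\bf Z}^{(0)}_{m,j})^\top, {\bf Z}^{(0)}_{m,j}\rangle_{L^2(\Omega_0)} = \tfrac{\pi_m}{2^{m+2}} I$, so the product collapses to $\tfrac{\pi_m}{2^{m+2}}(R^{\Omega_0}_m)^\top$, which is \cref{eq:load-vector-disk}. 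The annular identity follows identically from ${\bm \Phi}^{\Omega_\rho}_{m,j} = {\bf Z}^{\rho,(0,0)}_{m,j} R^{\Omega_\rho}_m$ and the orthogonality relation $\langle ({\bf Z}^{\rho,(0,0)}_{m,j})^\top, {\bf Z}^{\rho,(0,0)}_{m,j}\rangle_{L^2(\Omega_\rho)} = \tfrac{\pi_m}{2t^{m+1}} I$ with $a=b=0$.

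There is no substantial obstacle: the proposition is essentially the transpose of the mass-matrix computation in \cref{th:mass}, with the right-hand Zernike quasimatrix left un-raised, so that one factor of $R_m$ is absent and a bare diagonal Gram matrix survives. The only steps demanding genuine care are the bookkeeping ones flagged above, namely confirming that the reference-cell Zernike families coincide with the untransformed families (so the factorizations of \cref{prop:raising} apply directly), and matching the normalization constants $\tfrac{\pi_m}{2^{m+2}}$ and $\tfrac{\pi_m}{2t^{m+1}}$ to the orthogonality weights with parameters $a=0$ and $a=b=0$, respectively.
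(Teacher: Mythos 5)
Your proposal is correct and follows essentially the same route as the paper: the paper's proof is simply the observation that the argument of \cref{th:mass} carries over, i.e.\ Fourier decoupling via the angular integral $\int_0^{2\pi}\sin(m\theta+j\pi/2)\sin(\mu\theta+\zeta\pi/2)\,\mathrm{d}\theta$, cell decoupling via disjoint supports, and then the raising-operator factorization of \cref{prop:raising} combined with Zernike (annular) orthogonality --- exactly as you do, with the single $(R^{\Omega}_m)^\top$ surviving because the right-hand factor ${\bf Z}^{(0)}_{m,j}$ (resp.\ ${\bf Z}^{\rho,(0,0)}_{m,j}$) is left un-raised. Your normalization constants and the identification of the reference-cell families with the untransformed Zernike families both check out against the paper's orthogonality relations.
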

\begin{proof}
The proof of this result is very similar to the proof of \cref{th:mass}.
\end{proof}

\begin{remark}
A more na\"ive approach for computing the load vector expands $f(x, y)$ directly in the continuous hierarchical FEM basis $\Phi$ and does a matrix-vector product with the mass matrices in \cref{th:mass}. However, if $f(x,y)$ has a radial discontinuity, then an expansion in a continuous basis will result in a poor approximation. Moreover, as discussed in \cref{sec:synthesis}, the analysis operator for the basis $\Phi$ is increasingly ill-conditioned as $m \to \infty$. 
\end{remark}

\subsection{Truncation}
\label{sec:truncation}
Until this section, the discussion revolved around computing operators which are infinite-dimensional. Hence \cref{eq:helmholtz} was being approximated exactly. However, in order to ensure the computations are numerically tractable, we must discretize the operators by truncating the stiffness and (weighted) mass matrices as well as the load vector. Consider a mesh $\mathcal{T}_h$ for the disk domain $\Omega$. The continuous hierarchical basis quasimatrix ${\bm \Phi}^{\mathcal{T}_h}$ consists of the Fourier mode restrictions such that
\begin{align}
{\bm \Phi}^{\mathcal{T}_h} \coloneqq 
\begin{pmatrix}
{\bm \Phi}^{\mathcal{T}_h} _{0,1} & {\bm \Phi}^{\mathcal{T}_h}_{1,0} & {\bm \Phi}^{\mathcal{T}_h}_{1,1} & \cdots 
\end{pmatrix},
\end{align}
where ${\bm \Phi}^{\mathcal{T}_h} _{m,j}$ is as defined in \cref{def:Phi-mj}. We denote a \emph{truncation} of the quasimatrix at degree $N_p$ on each element by ${\bm \Phi}^{\mathcal{T}_h, N_p}$. Suppose that $N_p$ is even. Then
\begin{align}
{\bm \Phi}^{\mathcal{T}_h, N_p} \coloneqq 
\begin{pmatrix}
{\bm \Phi}^{\mathcal{T}_h, N_p} _{0,1} & {\bm \Phi}^{\mathcal{T}_h, N_p}_{1,0} & {\bm \Phi}^{\mathcal{T}_h, N_p}_{1,1} & \cdots & {\bm \Phi}^{\mathcal{T}_h, N_p}_{N_p,0} & {\bm \Phi}^{\mathcal{T}_h, N_p}_{N_p,1} 
\end{pmatrix},
\end{align}
where ${\bm \Phi}^{\mathcal{T}_h, N_p}_{m,j}$ denotes the finite-dimensional quasimatrix with any polynomial with degree $n>N_p$ removed. Thus ${\bm \Phi}^{\mathcal{T}_h, N_p}_{N_p,j}$ and ${\bm \Phi}^{\mathcal{T}_h, N_p}_{N_p-1,j}$ have one entry and ${\bm \Phi}^{\mathcal{T}_h, N_p}_{N_p-2,j}$ and ${\bm \Phi}^{\mathcal{T}_h, N_p}_{N_p-3,j}$ have two entries. Moreover, for $m < N_p -3$, then ${\bm \Phi}^{\mathcal{T}_h, N_p}_{m,j}$ has  $N_h (N_p-m)/2+2$ entries if $m$ is even and $N_h (N_p-3-m)/2+2$ entries if $m$ is odd.

The truncated (weighted) mass and stiffness matrices are subsequently formed from the truncated quasimatrices. For instance 
\begin{align}
M^{\Phi, N_p} = \langle (\Phi^{\mathcal{T}_h, N_p})^\top, \Phi^{\mathcal{T}_h, N_p} \rangle_{L^2(\Omega)}
= \begin{pmatrix}
M^{\Phi, N_p}_{0,1} &   & &\\
 & M^{\Phi, N_p}_{1,0} && \\
&  &  \ddots & \\
&  & &   M^{\Phi, N_p}_{N_p,1}
\end{pmatrix},
\end{align}
such that $M^{\Phi, N_p} \in  \mathbb{R}^{N \times N}$ where $N = \frac{1}{2} (N_h N_p + 2)(N_p-1) + 2$.

\subsection{Synthesis}
\label{sec:synthesis}
In order to inspect and plot the solutions, one requires a fast synthesis (evaluation) operator, i.e.~given a vector ${\bf u}$, we wish to evaluate ${\bm \Phi}(x_i,y_j){\bf u}$ for a (non-Cartesian) grid of points $\{(x_i,y_j)\}_{ij}$ in quasi-optimal complexity $\mathcal{O}(N_h N_p^2 \log N_p)$. We focus on applying the synthesis operator on a single annular cell $\Omega_\rho$, but we note that this technique generalizes to multiple cells. In particular, the synthesis operator may be applied on each cell in the mesh independently and in parallel.

Suppose we wish to evaluate an expansion of $u(x,y) = \Phi^{\Omega_\rho}(x,y) {\bf u}$ on the Zernike annular grid. Then by leveraging \cref{prop:raising}, we may construct a raising operator matrix $R^{\Omega_\rho}$ (consisting of the Fourier mode submatrices $R^{\Omega_\rho}_m$) such that
\begin{align}
u(x,y) = {\bm \Phi}^{\Omega_\rho}(x,y) {\bf u} = {\bf Z}^{\rho, (0,0)}(x,y) R^{\Omega_\rho} {\bf u} 
= {\bf Z}^{\rho, (0,0)}(x,y) \tilde{\bf u}.
\end{align}
Then one applies the quasi-optimal synthesis operator that exists for ${\bf Z}^{\rho, (0,0)}$ with the coefficient vector $\tilde{\bf u}$ \cite{Papadopoulos2023, Gutleb2023}. A synthesis operator for the disk cell follows analogously.

\begin{remark}
The analysis operator is the reverse of the synthesis operator. Given a known function $f(x,y)$ on $\Omega_\rho$, the goal is to find the coefficient vector ${\bf f}$, in quasi-optimal complexity, such that $f(x,y) = {\bm \Phi}^{\Omega_\rho}(x,y) {\bf f}$. The analysis operator may be deduced by first expanding $f(x,y) = {\bf Z}^{\rho, (0,0)}(x,y) \tilde{\bf f}$ and then inverting the raising operator matrix ${\bf R}^{\Omega_\rho}_m$ for each Fourier mode to deduce the corresponding coefficient vector for ${\bm \Phi}^{\Omega_\rho}_{m,j}$. However, we note that a square truncation ${\bf R}^{\Omega_\rho}_m$ becomes  increasingly ill-conditioned as $m \to \infty$. Hence, we advise against directly expanding a known function in the continuous hierarchical basis. Indeed, this is typically never required. One only ever expands the right-hand side in the well-conditioned ${\bf Z}^{\rho, (0,0)}(x,y)$ basis and computes the load vector as in \cref{sec:load-vector}.
\end{remark}

\subsection{Optimal complexity factorization and solves}
\label{sec:factorization}
As shown in \cref{sec:mass-and-stiffness} the mass, $M$, and stiffness, $A$, matrices are block diagonal where each submatrix on the diagonal corresponds to a different Fourier mode of the basis. Moreover, the sparsity pattern of the submatrices are that of a $B^3$-Arrowhead matrix \cite[Def.~4.1]{Olver2023}. Symmetric positive-definite $B^3$-Arrowhead matrices admit a reverse Cholesky factorization (a Cholesky factorization initialized from the bottom right corner of the matrix rather than the top left) with sparse factors with zero fill-in \cite[Cor.~4.3]{Olver2023}. Thanks to the sparsity of the reverse Cholesky factors, they may be computed and inverted in optimal linear complexity $\mathcal{O}(N_h (N_p - m))$ where the Fourier mode submatrix has size $\lfloor N_h (N_p - m)/2 \rfloor \times \lfloor N_h (N_p - m)/2 \rfloor$. The linear complexity Cholesky factorization may be applied to any positive-definite addition of the mass and stiffness matrix, e.g.~$A+ \kappa^2 M$ for any $\kappa \in \mathbb{R}$. For more details on the optimal complexity reverse Cholesky factorization for the matrices that arise here, we refer the reader to \cite{Olver2023}.

The linear system in \cref{eq:helmholtz-LA} may be indefinite for more general choices of $\lambda$. In the examples considered in \cref{sec:examples}, we have reported some success with a UL factorization (with no pivoting) and the factors remain sparse and computable in linear complexity. We do not expect this to be stable for a general choice of the Helmholtz coefficient $\lambda$. A stable alternative is a QL factorization which, for fixed $N_h$, achieves linear complexity as $N_p \to \infty$ but  it is currently unknown how to achieve linear complexity   as $N_h \to \infty$  due to fill-in.

For the factorizations to accurately capture the inverses of the matrices in the linear systems, we require each submatrix on the block-diagonal to be sufficiently well-conditioned. The conditioning of $A$ and $M$ is largely influenced by the continuity coefficients $\kappa_m$ and $\gamma_m$ in Definitions \labelcref{def:hat1} and \labelcref{def:hat2} for the hat functions. These coefficients degrade the conditioning as $m \to \infty$. Consider a mesh $\mathcal{T}_h = \{\bar K_j\}_{j=0}^{N_h-1}$ where $K_j = \Omega_{\rho_j, \rho_{j+1}}$, $0 = \rho_0 < \rho_1 < \cdots < \rho_{N_h}$. Let $\mathrm{ratio}(\mathcal{T}_h) = \min_{j \in \{1,\dots,N_h-1\}} \rho_j / \rho_{j+1}$. Then the smallest singular value of the submatrices is roughly $\mathcal{O}(\mathrm{ratio}(\mathcal{T}_h)^m)$. $\mathrm{ratio}(\mathcal{T}_h)$ is a measure of the thickness of the annuli cells in the mesh. The smaller the diameter of the cells, the closer the value of $\mathrm{ratio}(\mathcal{T}_h)$ is to one and the better the conditioning. Thus the conditioning is improved by \emph{increasing} the number of cells in the mesh. Consider a maximum truncation degree of $N_p$. In practice, provided one picks a mesh such that $\text{ratio}(\mathcal{T}_h)^{N_p} > 10^{-8}$ (double 64-bit precision), then utilizing a robust factorization (such as reverse Cholesky or UL) means that the ill-conditioning will not cause numerical pollution in the solutions.

\begin{remark}[Static condensation]
The hierarchical ordering of the basis functions, together with the sparsity structure of the mass and stiffness matrices, mean that the induced Helmholtz linear systems are amenable to preconditioning via static condensation. Essentially one constructs the Schur complement induced by considering the top left block consisting of the rows and columns associated with the hat functions. Then the hat and bubble degrees of freedom may be solved for independently. For more details we refer the reader to \cite[Ch.~3.2]{Schwab1998}.
\end{remark}

\section[3D cylindrical domains]{Quasi-optimal solves in 3D cylindrical domains}
\label{sec:ADI}

We construct a hierarchical FEM basis for a three-dimensional cylindrical domain  with multiple elements not just in the $x,y$-axis but also the $z$-axis. This is achieved  by considering the tensor-product space of the continuous FEM hierarchical basis for the disk with the univariate continuous hierarchical $p$-FEM basis for the interval as defined in \cite[Sec.~2.1]{Olver2023}, see also \cite[Ch.~2.5.2]{szabo2011introduction} and \cite[Ch.~3.1]{Schwab1998}. Moreover, if the equation considered is the screened Poisson equation, then we prove that there exists a quasi-optimal $\mathcal{O}(N_h N_p^3 \log (N_h^{1/4} N_p))$ complexity solve. The setup complexity is $\mathcal{O}(N_h N_p^3\log^2 N_p)$. To clarify, here we use $N_p$ to denote the truncation degree of each polynomial basis factor of the tensor-product space, such that the tensor-product basis contains polynomials of maximum degree $2N_p$. $N_h$ denotes the number of three-dimensional cells in the mesh. The total degrees of freedom is $\frac{1}{2} (N_h^{1/2}  N_p + 2)(N_p-1) + 2)((N_p+1)N_h^{1/2} -1) = \mathcal{O}(N_h N_p^3)$. 

Without a loss of generality, let ${\bm \Omega} = \Omega_0 \times (-1,1) \subset \mathbb{R}^3$ be a cylindrical domain. Let $\mathcal{T}_h$ denote a mesh for ${\bm \Omega}$ into cylindrical and tube cells as exemplified in \cref{fig:3d-mesh}.
\begin{figure}[h!]
\centering
\includegraphics[width =0.15 \textwidth]{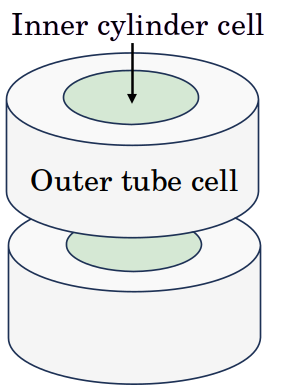}
\caption{A four cell mesh for the cylindrical domain ${\bm \Omega}$.}
\label{fig:3d-mesh}
\end{figure}
Fix a Helmholtz coefficient $\lambda(r) \geq 0$ a.e.~and consider the screened Poisson equation, find $u \in H^1_0(\Omega)$ that satisfies \cref{eq:helmholtz}. We pick the basis  $u(x,y,z) = {\bm \Phi}(x,y) U {\bf Q}(z)^\top$ where $U$ is the matrix of expansion coefficients and ${\bf Q}(z)$ is the quasimatrix of the univariate continuous hierarchical $p$-FEM basis consisting of weighted Jacobi polynomials and piecewise linear hat functions defined in \cite{Olver2023}. Note that ${\bf Q}(-1) = {\bf Q}(1) = {\bf 0}$. In quasimatrix notation, the three-dimensional screened Poisson equation may be rewritten as a generalized Sylvester equation, i.e.~find the coefficient matrix $U$ that satisfies:
\begin{align}
\left(M_\lambda^{\Phi} + A^\Phi \right) U M^Q + M^\Phi U A^Q  = G_{\Phi, \Psi} F G_{Q, P}^\top. \label{eq:adi:1}
\end{align}
Here $M^{\Phi}$, $M_\lambda^{\Phi}$, and $A^\Phi$ are the mass, weighted mass, and stiffness matrices, respectively, for the hierarchical basis $\Phi$ on the disk. Whereas $M^Q$ and $A^Q$ are the mass and stiffness matrices of the univariate basis. $F$ denotes the matrix of expansion coefficients for the right-hand side $f(x,y,z) = {\bm \Psi}(x,y) F {\bf P}(z)^\top$ where ${\bf P}(z)$ denotes the quasimatrix of the discontinuous hierarchical basis (consisting of piecewise Legendre polynomials) \cite{Olver2023}. Moreover, $G_{\Phi,\Psi} = \langle{\bm \Phi}^\top, {\bm \Psi}\rangle_{L^2(\Omega_0)}$ and $G_{Q,P} = \langle {\bf Q}, {\bf P}\rangle_{L^2(-1,1)}$. We note that  $A^Q$, $M^Q$, and $G_{Q,P}$ are sparse \cite{Olver2023}. Hence all the operator matrices appearing in \cref{eq:adi:1} are sparse.  We now truncate the tensor-product factor bases to degree $N_p$ to recover the finite-dimensional matrices $M_\lambda^{\Phi, N_p}$, $M^{\Phi, N_p}$, $A^{\Phi, N_p}$, $A^{Q, N_p}$, $M^{Q,N_p}$, $G^{N_p}_{\Phi, \Psi}$, and $G^{N_p}_{Q,P}$. For the remainder of this subsection, we drop the superscript $N_p$ for readability.

The ADI algorithm is an iterative algorithm for finding the matrix $X$ that solves the Sylvester equation $AX - XB = F$ \cite{Fortunato2020}.  It requires the two following assumptions to hold:
\begin{enumerate}[label={P}\arabic*.]
\itemsep=0pt
\item $A$ and $B$ are symmetric matrices; \label{as:P1}
\item There exist real disjoint nonempty intervals $[\mu_a, \mu_b]$ and $[\mu_c, \mu_d]$ such that $\sigma(A) \subset [\mu_a, \mu_b]$ and $\sigma(B) \subset [\mu_c, \mu_d]$, where $\sigma$ denotes the spectrum of a matrix. \label{as:P2}
\end{enumerate}
The algorithm proceeds iteratively. First one fixes the initial matrix $X_0 = 0$. Then, iteratively for $\ell \in \{1,2,\dots,\ell_{\mathrm{max}}\}$, we compute
\begin{align}
 X_{\ell-1/2} &= (F - (A-p_\ell I) X_{\ell-1})(B - p_\ell I)^{-1}  \label{eq:adi:7},\\ 
X_{\ell} &= (A-q_\ell I)^{-1}(F - X_{\ell-1/2}(B-q_\ell I)), \label{eq:adi:8}
\end{align}
where the value of the final iterate is $\ell_{\mathrm{max}} = \lceil \log(16 \gamma) \log(4/\epsilon)/\pi^2 \rceil$ with $\gamma = |\mu_c-\mu_a||\mu_d-\mu_b|/(|\mu_c-\mu_b||\mu_d-\mu_a|)$. The ADI shifts $p_\ell$ and $q_\ell$ have explicit formulae depending on $\gamma$ \cite[Eq.~(2.4)]{Fortunato2020}. Notably, we have that $p_\ell > 0$ and $q_\ell < 0$ for all $\ell \in \{1,\dots,\ell_{\mathrm{max}}\}$.

The first step to obtaining quasi-optimal solves is to notice that, due to the block diagonal (Fourier mode decoupling) nature of $M^\Phi$ and $M^\Phi_\lambda + A^\Phi$, \cref{eq:adi:1} also admits a Fourier mode decoupling. Hence, for $m \in \{0,1,\dots,N_p\}$, $j \in \{0,1\}$, $(m,j) \neq (0,0)$, we instead consider the equations:
\begin{align}
K^\Phi_{m,j} U_{m,j} M^Q + M^\Phi_{m,j} U_{m,j} A^Q  = H_{m,j}, \label{eq:adi:5}
\end{align}
where $K^\Phi_{m,j}  \coloneqq [M_\lambda^{\Phi}]_{m,j} + A^\Phi_{m,j}$ and $H_{m,j} \coloneqq G_{\Phi_{m,j}, \Psi_{m,j}} F_{m,j} G_{Q, P}^\top$. The following theorem reveals how to leverage the ADI algorithm to solve \cref{eq:adi:5} for each Fourier mode.

\begin{theorem}
\label{th:ADI}
Consider the cylindrical domain ${\bm \Omega} = \Omega_0 \times (-1,1)$ and a mesh $\mathcal{T}_h$ as depicted in \cref{fig:3d-mesh}. Consider the reverse Cholesky factorizations of the truncated matrices (not indicated) $L^\top L = A^Q$ and $V^\top V = K^\Phi_{m,j}$.

Suppose that $\sigma(L^{-\top} M^Q L^{-1}) \subset [\mu_a, \mu_b]$ and $\sigma(-V^{-\top} M^\Phi_{m,j} V^{-1}) \subset [\mu_c, \mu_d]$. Pick an ADI tolerance $\epsilon$. Let $\gamma = |\mu_c-\mu_a||\mu_d-\mu_b|/(|\mu_c-\mu_b||\mu_d-\mu_a|)$ and fix $\ell_{\mathrm{max}} = \lceil \log(16 \gamma) \log(4/\epsilon)/\pi^2 \rceil$. Assign $W_0 = {\bf 0}$ and for $\ell \in \{1,2,\dots,\ell_{\mathrm{max}}\}$ compute:
\begin{align}
W_{\ell-1/2}  &= [H_{m,j} - (M_{m,j}^\Phi - p_\ell K^\Phi_{m,j})W_{\ell-1}](-M^Q - p_\ell A^Q)^{-1} \label{eq:adi2:3}\\ 
W_{\ell} &= (M^\Phi_{m,j} - q_\ell K^\Phi_{m,j})^{-1} [ H_{m,j} - W_{\ell-1/2} (-M^Q - q_\ell A^Q)], \label{eq:adi2:4}
\end{align}
Then $U_{m,j} \approx  W_{\ell_\mathrm{max}} (A^Q)^{-1}$. More precisely,
\begin{align}
\| V(U_{m,j} - W_{\ell_\mathrm{max}}  (A^Q)^{-1}) L^\top \|_2 \leq \epsilon \|V U_{m,j} L^\top \|_2.
\label{eq:adi2:9}
\end{align}
\end{theorem}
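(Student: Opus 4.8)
The plan is to reduce the generalized Sylvester equation \cref{eq:adi:5} to a standard Sylvester equation $AX - XB = F$ by a congruence transformation built from the reverse Cholesky factors $V$ and $L$, to check that the ADI hypotheses \ref{as:P1}--\ref{as:P2} hold for the reduced problem, to show that the factored iteration \cref{eq:adi2:3}--\cref{eq:adi2:4} is \emph{exactly} the standard ADI iteration \cref{eq:adi:7}--\cref{eq:adi:8} applied to that reduced equation, and finally to undo the congruence so that the abstract ADI convergence bound becomes \cref{eq:adi2:9}.

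First I would carry out the reduction. Writing $K = K^\Phi_{m,j}$, $M^\Phi = M^\Phi_{m,j}$, $H = H_{m,j}$, substitute $K = V^\top V$ and $A^Q = L^\top L$ into \cref{eq:adi:5}, left-multiply by $V^{-\top}$, right-multiply by $L^{-1}$, and introduce $X = V U_{m,j} L^\top$. Using the congruence identities $M^\Phi - pK = V^\top(\tilde M^\Phi - pI)V$ and $-M^Q - pA^Q = L^\top(-\tilde M^Q - pI)L$, where $\tilde M^\Phi = V^{-\top}M^\Phi V^{-1}$ and $\tilde M^Q = L^{-\top}M^Q L^{-1}$, the equation collapses to $\tilde M^\Phi X + X\tilde M^Q = \tilde H$ with $\tilde H = V^{-\top}H L^{-1}$. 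This is $AX - XB = F$ with $A = \tilde M^\Phi$, $B = -\tilde M^Q$, $F = \tilde H$. Hypothesis \ref{as:P1} holds because $M^\Phi$ and $M^Q$ are symmetric, so $A$ and $B$ are symmetric. For \ref{as:P2}, the spectral assumptions give $\sigma(A) = \sigma(\tilde M^\Phi)\subset[-\mu_d,-\mu_c]$ (positive, since $M^\Phi\succ 0$) and $\sigma(B) = -\sigma(\tilde M^Q)\subset[-\mu_b,-\mu_a]$ (negative); these are disjoint, the signs are consistent with $p_\ell>0$, $q_\ell<0$, and a direct computation shows the cross-ratio of these two intervals equals the $\gamma$ in the statement, so $\ell_{\mathrm{max}}$ and the shifts $p_\ell,q_\ell$ are precisely those prescribed.

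The crux is to identify the change of variables linking the iterates $W_\ell$ to the standard ADI iterates $X_\ell$, and the subtlety I expect to be the main obstacle is that the integer and half-integer iterates must transform \emph{differently}: I would set $X_\ell = V W_\ell L^{-1}$ at integer steps but $X_{\ell-1/2} = V^{-\top} W_{\ell-1/2} L^{\top}$ at the half-steps. Substituting $W_{\ell-1/2} = V^{\top}X_{\ell-1/2}L^{-\top}$ and $W_{\ell-1} = V^{-1}X_{\ell-1}L$ into the rearranged \cref{eq:adi2:3} and applying the congruence identities, the common factors $V^\top$ (on the left) and $L$ (on the right) cancel and one recovers exactly $X_{\ell-1/2}(B-p_\ell I) = F - (A-p_\ell I)X_{\ell-1}$; substituting $W_\ell = V^{-1}X_\ell L$ and $W_{\ell-1/2} = V^{\top}X_{\ell-1/2}L^{-\top}$ into the rearranged \cref{eq:adi2:4} similarly yields $(A-q_\ell I)X_\ell = F - X_{\ell-1/2}(B-q_\ell I)$. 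These are \cref{eq:adi:7}--\cref{eq:adi:8}. The delicate point to verify is mutual consistency of the two transforms across the half-steps: the integer-indexed $X_\ell$ produced by the second half-step must be precisely the input consumed by the first half-step of the next sweep, and likewise the half-integer transform is produced by the first half-step and consumed by the second; a single fixed congruence does not accomplish this, so the alternation is essential. Since $W_0 = {\bf 0}$ maps to $X_0 = {\bf 0}$, the standard and factored iterates coincide for every $\ell$.

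Finally I would undo the transformation. The ADI convergence theorem of \cite{Fortunato2020} gives $\|X^\star - X_{\ell_{\mathrm{max}}}\|_2 \le \epsilon\|X^\star\|_2$, where $X^\star = V U_{m,j} L^\top$ is the exact solution of the reduced equation. Because $(A^Q)^{-1}L^\top = (L^\top L)^{-1}L^\top = L^{-1}$, the returned approximation satisfies $V W_{\ell_{\mathrm{max}}}(A^Q)^{-1}L^\top = V W_{\ell_{\mathrm{max}}}L^{-1} = X_{\ell_{\mathrm{max}}}$. Hence $V\bigl(U_{m,j} - W_{\ell_{\mathrm{max}}}(A^Q)^{-1}\bigr)L^\top = X^\star - X_{\ell_{\mathrm{max}}}$, and taking norms turns the ADI bound directly into \cref{eq:adi2:9}.
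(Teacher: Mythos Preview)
Your proposal is correct and is precisely the congruence-transformation argument that the paper defers to by citing \cite[Lem.~5.1]{Olver2023}; the paper's own proof is a single sentence invoking that lemma, whereas you have carried out the reduction in full. The alternating change of variables $X_\ell = V W_\ell L^{-1}$ at integer steps and $X_{\ell-1/2} = V^{-\top} W_{\ell-1/2} L^\top$ at half steps is exactly the right device, and your verification that \cref{eq:adi2:3}--\cref{eq:adi2:4} collapse to \cref{eq:adi:7}--\cref{eq:adi:8} is clean.
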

\begin{proof}
The result follows by a direct application of Lemma 5.1 in \cite{Olver2023}.
\end{proof}

Thanks to \cref{th:ADI}, we may compute $U_{m,j}$ via the ADI algorithm. The remainder of this sections focuses on showing that the solution may be computed in quasi-optimal $\mathcal{O}(N_h N_p^3 \log (N_h^{1/2} N_p) \log \epsilon^{-1})$ flops. The first step is to show that asymptotically $\ell_{\text{max}} = \mathcal{O}(\log (N_h^{1/2} N_p) \log \epsilon^{-1})$.

\begin{lemma}[Inverse inequality]
\label{lem:inverse}
Consider a disk or annulus domain $\Omega_{a,b}$, $0 \leq a < b$. Suppose that $\pi_p$ denotes a degree $p$ multivariate polynomial. Let $h = b-a$. Then there exists a $c > 0$ such that the following inverse inequality holds:
\begin{align}
\| \nabla \pi_p \|_{L^2(\Omega_{a,b})} \leq c h^{-1} p^2 \| \pi_p \|_{L^2(\Omega_{a,b})}. 
\label{eq:inverse}
\end{align}
\end{lemma}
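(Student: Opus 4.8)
The plan is to pass to polar coordinates and diagonalise the angular variable by a Fourier expansion, reducing the two--dimensional bound to a family of one--dimensional (weighted) inverse inequalities whose only nontrivial ingredient is the classical $L^2$ Markov/inverse inequality for univariate polynomials. Since $\pi_p$ has degree at most $p$ in $(x,y)$, its Fourier expansion in $\theta$ truncates at mode $p$,
\[
\pi_p(r,\theta)=\sum_{m=0}^{p}\bigl(\alpha_m(r)\cos m\theta+\beta_m(r)\sin m\theta\bigr),\qquad \alpha_m(r)=r^{m}A_m(r^2),
\]
with each $A_m$ a univariate polynomial of degree at most $(p-m)/2$ (and likewise for $\beta_m$). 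Writing $\nabla=(\partial_r,\ r^{-1}\partial_\theta)^\top$ and using the orthogonality of $\{\cos m\theta,\sin m\theta\}$ over $[0,2\pi)$, both $\|\pi_p\|_{L^2(\Omega_{a,b})}^2$ and $\|\nabla\pi_p\|_{L^2(\Omega_{a,b})}^2$ decouple into sums over $m$ of the radial integrals $\int_a^b|\alpha_m|^2\,r\,\dr$ and $\int_a^b\bigl(|\alpha_m'|^2+m^2 r^{-2}|\alpha_m|^2\bigr)r\,\dr$ respectively (plus the identical $\beta_m$ contributions). By Parseval it therefore suffices to bound, uniformly in $m\le p$, the second integral by $c^2h^{-2}p^4$ times the first.

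On an annulus ($a>0$) the two pieces are routine. For the radial term I would rescale the interval $[a,b]$ of length $h$ to $[-1,1]$ and apply the classical inequality $\|q'\|_{L^2(-1,1)}\le Cp^2\|q\|_{L^2(-1,1)}$ for polynomials of degree $\le p$, obtaining $\int_a^b|\alpha_m'|^2\,\dr\le Ch^{-2}p^4\int_a^b|\alpha_m|^2\,\dr$; since $a\le r\le b$, the area weight may be inserted and removed at the cost of a factor $b/a$. For the angular term, for each fixed $r$ the map $\theta\mapsto\pi_p(r\cos\theta,r\sin\theta)$ is a trigonometric polynomial of degree $\le p$, so Bernstein's inequality contributes only $m^2\le p^2$, which is dominated by $p^4$, and the bound $r^{-1}\le a^{-2}r$ converts it into a multiple of the first integral. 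Summing over $m$ recombines everything into the claimed estimate, with a constant that depends on the aspect ratio $a/b$.

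The genuinely delicate case is the disk $a=0$, where the area weight degenerates at the origin and the angular integrand carries the coordinate singularity $r^{-2}$. Here I would exploit the factorisation $\alpha_m(r)=r^mA_m(r^2)$: the substitution $s=r^2$ turns the radial integrals into Jacobi--weighted integrals of $A_m$ with weight $s^{m}$ over $[0,1]$, for which the sharp $O(p^2)$ inverse inequality for (weighted) Jacobi polynomials applies, while the angular integrand $r^{-1}|\alpha_m|^2=r^{2m-1}|A_m(r^2)|^2$ is integrable for every $m\ge1$ and vanishes identically for $m=0$, so no singularity survives. Equivalently, the mode--$m$ Rayleigh quotient of the Dirichlet energy against the $L^2$ norm equals the largest generalized eigenvalue of the stiffness against the mass matrix of \cref{sec:mass-and-stiffness}, which scales like $p^4$ (the $O(p^2)$ growth of the diagonal stiffness entries being amplified by the mass matrix possessing smallest eigenvalue $\sim p^{-2}$). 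Finally, an isotropic rescaling of the outer radius to $1$ shows the constant can be taken to depend only on $a/b$; its growth for thin annuli recombines with the factor $b^{-1}\le h^{-1}$ produced by that rescaling to reproduce the clean power $h^{-1}$. I expect the main obstacle to be exactly this origin analysis, namely establishing the $p^2$ radial bound \emph{uniformly in the Fourier mode $m$} and with a constant that does not deteriorate as $a\to0$, so that the disk and annulus are covered by a single argument; the interval and angular estimates are otherwise standard.
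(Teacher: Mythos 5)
Your strategy (Fourier decomposition in $\theta$, Parseval, reduction to one-dimensional weighted inequalities in $r$) is genuinely different from the paper's proof, which is essentially a citation: the disk case is quoted from \cite[Ex.~4.25]{Cangiani2022} and the annulus case is then obtained by a radial rescaling argument. Your structural claims are correct: the mode-$m$ component of a degree-$p$ polynomial is $r^m A_m(r^2)$ with $\deg A_m \le (p-m)/2$, both $\|\pi_p\|^2$ and $\|\nabla \pi_p\|^2$ decouple across modes, and on an annulus the radial term follows from the scaled 1D Markov inequality. (Note, though, that your annulus constant depends on the aspect ratio $a/b$ and blows up as $a/b \to 0$; this is harmless for the quasi-uniform meshes of \cref{prop:eigs}, where aspect ratios are bounded below, but it is weaker than a uniform constant and sits oddly next to the disk case $a=0$, which you treat separately.)

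The genuine gap is the disk case, and it is exactly the step you flag at the end as "the main obstacle." After the substitution $s = r^2$, what your argument requires is
\begin{align*}
m^2 \int_0^1 s^{m-1} |A_m(s)|^2 \,\mathrm{d}s \;\le\; C p^4 \int_0^1 s^{m} |A_m(s)|^2 \,\mathrm{d}s,
\qquad
\int_0^1 s^{m+1} |A_m'(s)|^2 \,\mathrm{d}s \;\le\; C p^4 \int_0^1 s^{m} |A_m(s)|^2 \,\mathrm{d}s,
\end{align*}
with $C$ \emph{uniform over all} $0 \le m \le p$. This is not an off-the-shelf ``sharp $O(p^2)$ inverse inequality for weighted Jacobi polynomials'': standard statements of such inequalities carry constants depending on the weight exponents, and here the exponent is the Fourier mode $m$, which grows with $p$. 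Uniformity in $m$ is precisely the coordinate-singularity difficulty at the origin; indeed, the two displayed inequalities are nothing but the mode-$m$ restriction of \cref{eq:inverse} on the disk, so invoking them without proof is circular. Your fallback via Rayleigh quotients does not close the gap either: the claims that the stiffness entries grow like $O(p^2)$ and that the smallest mass eigenvalue scales like $p^{-2}$ are themselves unproven, and the matrices of \cref{sec:mass-and-stiffness} are assembled in the weighted hat/bubble basis rather than the raw space of degree-$p$ polynomials, so their generalized eigenvalues are not literally the Rayleigh quotient you need. To complete your route you would have to prove the displayed inequalities uniformly in $m$ (e.g.\ via Bessel-type asymptotics of $P_d^{(0,m)}$ near $s=0$, or a weighted Hardy-type inequality), or else do what the paper does and cite the known two-dimensional result on the disk.
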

\begin{proof}
The case of a disk domain, $a=0$, was shown in \cite[Ex.~4.25]{Cangiani2022}. Then the extension to the annulus domain follows by leveraging a rescaling argument in the radial direction akin to the techniques introduced in \cite[Ch.~4.5]{Brenner2008}.
\end{proof} 

\begin{proposition}[Spectrum]
\label{prop:eigs}
Suppose the conditions of \cref{th:ADI} hold and $\lambda \in L^\infty(\Omega)$ with $\lambda(r) \geq 0$ a.e.~in $\Omega$. Suppose the mesh is quasi-uniform such that $N_h \sim h^{-2}$, where $h$ denotes the minimum diameter of the cells in the mesh \cite[Def.~4.4.13]{Brenner2008}. Consider the reverse Cholesky factorizations of the truncated matrices (not indicated) $L^\top L = A^Q$ and $V^\top V = K^\Phi_{m,j}$. Then there exist constants $0 < c \leq C < \infty$, independent of $N_p$ and $N_h$, such that
\begin{align}
\sigma(L^{-\top} M^Q L^{-1}) &\subseteq \left[\frac{1}{12 N_h N_p^{4}}, \frac{4}{\pi^2} \right], \label{spec:1}\\ 
\sigma(V^{-\top} M^\Phi_{m,j} V^{-1}) &\subseteq [c (N_h N_p^{4} + \|\lambda\|_{L^\infty(\Omega)})^{-1}, C]. \label{spec:2}
\end{align}
It follows that asymptotically $\ell_{\mathrm{max}} = \mathcal{O}(\log (N_h^{1/4} N_p) \log \epsilon^{-1})$ where $\ell_{\mathrm{max}}$ is the value of final iterate in the ADI algorithm and $\epsilon$ is the ADI tolerance.
\end{proposition}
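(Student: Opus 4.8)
The plan is to recognize both matrices as symmetric representations of generalized eigenvalue problems and then squeeze the resulting Rayleigh quotients between a Poincaré inequality (upper bound) and an inverse inequality (lower bound). First I would note that because $L^\top L = A^Q$ and $V^\top V = K^\Phi_{m,j}$ are symmetric positive-definite factorizations, the symmetric matrix $L^{-\top} M^Q L^{-1}$ is congruent to the pencil $(M^Q, A^Q)$: its eigenvalues are exactly the generalized eigenvalues $\mu$ of $M^Q v = \mu A^Q v$, so by the min--max principle
\[
\sigma(L^{-\top} M^Q L^{-1}) \subseteq \left[\, \min_{v \neq 0} \frac{v^\top M^Q v}{v^\top A^Q v}, \; \max_{v \neq 0} \frac{v^\top M^Q v}{v^\top A^Q v} \,\right],
\]
and similarly $\sigma(V^{-\top} M^\Phi_{m,j} V^{-1})$ equals the range of $w^\top M^\Phi_{m,j} w / w^\top K^\Phi_{m,j} w$. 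Writing $u = {\bf Q}(z)\,v \in H^1_0(-1,1)$ and $\phi = {\bm \Phi}_{m,j} w \in H^1_0(\Omega_0)$, these quadratic forms are $v^\top M^Q v = \|u\|_{L^2}^2$, $v^\top A^Q v = \|u'\|_{L^2}^2$, $w^\top M^\Phi_{m,j} w = \|\phi\|_{L^2(\Omega_0)}^2$, and $w^\top K^\Phi_{m,j} w = \|\nabla\phi\|_{L^2(\Omega_0)}^2 + \|\sqrt{\lambda}\,\phi\|_{L^2(\Omega_0)}^2$, so the eigenvalues are $\|u\|^2/\|u'\|^2$ and $\|\phi\|^2/(\|\nabla\phi\|^2 + \|\sqrt{\lambda}\,\phi\|^2)$ respectively.

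For the two upper bounds I would invoke Poincaré inequalities. On $(-1,1)$ the smallest Dirichlet eigenvalue is $\pi^2/4$, so $\|u'\|^2 \geq (\pi^2/4)\|u\|^2$ and the first spectrum lies below $4/\pi^2$. For the disk I would discard the nonnegative term $\|\sqrt\lambda\,\phi\|^2$ and apply the Poincaré inequality on $H^1_0(\Omega_0)$, whose constant is $1/j_{0,1}^2$ (the reciprocal of the first Dirichlet eigenvalue of the disk) and is in particular independent of the Fourier mode $(m,j)$, of $N_p$, and of $N_h$; this gives the $N_p,N_h$-independent upper bound $C$ in \cref{spec:2}.

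For the lower bounds I would use inverse inequalities. In the univariate direction the mesh has $N_h^{1/2}$ elements of size $h \sim N_h^{-1/2}$, and the explicit $P_1$ computation $\lambda_{\max}(M^{-1}A) = 12/h^2$ together with the standard $N_p^2$ growth of the polynomial inverse inequality yields $\|u'\|^2 \leq 12\, N_h N_p^4 \|u\|^2$, hence the explicit lower endpoint $1/(12 N_h N_p^4)$ of \cref{spec:1}. On the disk I would apply \cref{lem:inverse} cellwise, using that each cell has diameter $h \sim N_h^{-1/2}$ and that the restrictions of the hat and bubble functions are genuine Cartesian polynomials of degree $O(N_p)$; summing the squared local estimates over the $N_h^{1/2}$ cells gives $\|\nabla\phi\|^2 \leq c^2 N_h N_p^4 \|\phi\|^2$, while $\|\sqrt\lambda\,\phi\|^2 \leq \|\lambda\|_{L^\infty}\|\phi\|^2$. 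Adding these bounds the denominator by $(c^2 N_h N_p^4 + \|\lambda\|_{L^\infty})\|\phi\|^2$, which yields the lower bound $c(N_h N_p^4 + \|\lambda\|_{L^\infty})^{-1}$.

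Finally, for the iteration count I would feed $[\mu_a,\mu_b] = [\tfrac{1}{12 N_h N_p^4}, \tfrac{4}{\pi^2}]$ and, after the sign flip demanded by \cref{th:ADI}, $[\mu_c,\mu_d] = [-C, -c(N_h N_p^4 + \|\lambda\|_{L^\infty})^{-1}]$ into $\gamma = |\mu_c-\mu_a||\mu_d-\mu_b|/(|\mu_c-\mu_b||\mu_d-\mu_a|)$. As $N_h N_p^4 \to \infty$ three of the four factors converge to nonzero constants while $|\mu_d-\mu_a| = \Theta((N_h N_p^4)^{-1})$, so $\gamma = \mathcal{O}(N_h N_p^4)$ and $\log(16\gamma) = \mathcal{O}(\log(N_h N_p^4)) = \mathcal{O}(\log(N_h^{1/4} N_p))$ since $\log N_h + 4\log N_p = 4\log(N_h^{1/4}N_p)$; substituting into $\ell_{\mathrm{max}} = \lceil \log(16\gamma)\log(4/\epsilon)/\pi^2\rceil$ gives the claimed asymptotic. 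I expect the main obstacle to be the cellwise-to-global step of the disk inverse inequality together with confirming that every constant, especially the disk Poincaré constant across all Fourier modes and the sharp univariate constant $12$, is genuinely independent of $m$, $N_p$, and $N_h$; once the two spectral enclosures are in hand, the $\ell_{\mathrm{max}}$ estimate is a routine asymptotic manipulation.
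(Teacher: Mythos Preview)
Your proposal is correct and takes essentially the same approach as the paper, whose proof simply cites \cite[Lem.~5.2]{Olver2023} for \cref{spec:1} and says \cref{spec:2} follows from the same argument together with the disk inverse inequality \cref{lem:inverse}. Your sketch spells out precisely that argument---Rayleigh-quotient reformulation, Poincar\'e for the upper bounds, inverse inequalities for the lower bounds, and the routine $\gamma$ asymptotics---so it is a faithful expansion of what the paper points to (the only slight imprecision is invoking the $P_1$ value $12/h^2$ as the source of the constant $12$; the sharp constant really comes from the high-order inverse estimate in \cite{Olver2023}, but this does not affect the argument).
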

\begin{proof}
\cref{spec:1} was shown to hold in \cite[Lem.~5.2]{Olver2023} and \cref{spec:2} may be derived by utilizing \cref{lem:inverse} and following the proof of \cite[Lem.~5.2]{Olver2023} with some small modifications.
\end{proof}

\begin{corollary}
Suppose the conditions of \cref{prop:eigs} hold. Then \cref{eq:adi:1} may be solved with $\mathcal{O}(N_h N_p^3 \log(N_h^{1/4}N_p) \log \epsilon^{-1})$ flops via the ADI algorithm, where $\epsilon$ is the ADI tolerance for each Fourier mode subsolve.
\end{corollary}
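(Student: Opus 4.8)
The plan is to express the total flop count as a sum over Fourier modes of (number of ADI sweeps) $\times$ (cost per sweep), and to show that each of the three factors is optimal. First I would invoke the Fourier mode decoupling established in \cref{eq:adi:5}: solving the generalized Sylvester equation \cref{eq:adi:1} reduces to solving the $2N_p+1$ decoupled problems indexed by $m \in \{0,\dots,N_p\}$, $j \in \{0,1\}$, $(m,j)\neq(0,0)$, each treated by the ADI iteration \crefrange{eq:adi2:3}{eq:adi2:4} of \cref{th:ADI}. The cost is therefore $\sum_{m,j} \ell_{\mathrm{max}} \times (\text{cost of one ADI sweep for mode }(m,j))$, and the argument splits into bounding $\ell_{\mathrm{max}}$ and bounding the per-sweep cost.

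The iteration count is handled by \cref{prop:eigs}: the spectral inclusions \crefrange{spec:1}{spec:2} give $\gamma = \mathcal{O}(N_h N_p^4)$, whence $\ell_{\mathrm{max}} = \lceil \log(16\gamma)\log(4/\epsilon)/\pi^2\rceil = \mathcal{O}(\log(N_h^{1/4}N_p)\log\epsilon^{-1})$. This bound is uniform in $(m,j)$ and so may be pulled outside the sum over modes. It remains to bound a single sweep. Writing $n_m = \mathcal{O}(N_h^{1/2}(N_p-m))$ for the number of disk degrees of freedom in Fourier mode $m$ (the disk carries $\mathcal{O}(N_h^{1/2})$ cells) and $n_z = \mathcal{O}(N_h^{1/2}N_p)$ for the number of $z$-degrees of freedom, the iterate $W_\ell$ is an $n_m \times n_z$ matrix. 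Each half-step requires one sparse matrix--matrix product against $M^\Phi_{m,j} - p_\ell K^\Phi_{m,j}$ (respectively $M^\Phi_{m,j} - q_\ell K^\Phi_{m,j}$) and one solve against the shifted univariate matrix $-M^Q - p_\ell A^Q$ (respectively the shifted disk matrix $M^\Phi_{m,j} - q_\ell K^\Phi_{m,j}$).

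The crucial observation is that all of the shifted matrices inherit the $B^3$-Arrowhead sparsity of their summands, and, because $p_\ell > 0$ and $q_\ell < 0$, they are symmetric positive definite up to an overall sign: $M^Q + p_\ell A^Q$ and $M^\Phi_{m,j} + |q_\ell| K^\Phi_{m,j}$ are positive-definite sums of $B^3$-Arrowhead matrices. Hence \cite[Cor.~4.3]{Olver2023} yields reverse Cholesky factorizations and back-substitutions in linear complexity, $\mathcal{O}(n_z)$ and $\mathcal{O}(n_m)$ respectively. A sparse product against an $n_m \times n_m$ matrix with $\mathcal{O}(n_m)$ nonzeros applied to an $n_m \times n_z$ block costs $\mathcal{O}(n_m n_z)$, and solving the $n_z \times n_z$ shifted system for all $n_m$ rows (and symmetrically the $n_m \times n_m$ system for all $n_z$ columns) also costs $\mathcal{O}(n_m n_z)$; the two per-sweep factorizations are subdominant to this. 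Thus one sweep for mode $(m,j)$ costs $\mathcal{O}(n_m n_z) = \mathcal{O}(N_h N_p (N_p-m))$.

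Finally I would sum over modes, using $\sum_{m=0}^{N_p}(N_p-m) = \mathcal{O}(N_p^2)$, to obtain $\sum_{m,j}\mathcal{O}(N_h N_p(N_p-m)) = \mathcal{O}(N_h N_p^3)$ per ADI sweep; the closing recovery $U_{m,j} \approx W_{\ell_{\mathrm{max}}}(A^Q)^{-1}$ is one further linear-complexity solve per mode and contributes only $\mathcal{O}(N_h N_p^3)$. Multiplying by $\ell_{\mathrm{max}}$ gives the claimed $\mathcal{O}(N_h N_p^3 \log(N_h^{1/4}N_p)\log\epsilon^{-1})$ bound. The main obstacle is the per-sweep complexity: one must verify that the shifts preserve \emph{both} the $B^3$-Arrowhead sparsity and the definiteness needed to apply the optimal-complexity reverse Cholesky of \cite[Cor.~4.3]{Olver2023} at every iteration, with the sign conventions $p_\ell>0$, $q_\ell<0$ being exactly what guarantees definiteness, and to confirm that although the changing shifts force a fresh factorization each sweep, these refactorizations stay subdominant to the $\mathcal{O}(n_m n_z)$ product and solve cost.
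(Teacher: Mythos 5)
Your proposal is correct and follows essentially the same route as the paper's proof: Fourier-mode decoupling, the bound $\ell_{\mathrm{max}} = \mathcal{O}(\log(N_h^{1/4}N_p)\log\epsilon^{-1})$ from \cref{prop:eigs}, linear-complexity sparse products and solves per ADI sweep via the reverse Cholesky factorization of the shifted symmetric positive-definite $B^3$-Arrowhead matrices, and a final sum over the $\mathcal{O}(N_p)$ Fourier modes. The only differences are cosmetic: you use the analytic spectral inclusions directly to bound $\gamma$, whereas the paper additionally accounts for the (subdominant) $\mathcal{O}(N_h^{1/2}N_p^2)$ cost of estimating the interval endpoints by shifted inverse iteration, and your mode-by-mode dimension count $n_m = \mathcal{O}(N_h^{1/2}(N_p-m))$ is a sharper bookkeeping of the same $\mathcal{O}(N_h N_p^3)$ per-sweep total that the paper obtains with the cruder per-mode bound $\mathcal{O}(N_h N_p^2)$.
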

\begin{proof}
We first note that the eigenvalues $\mu_a$ and $\mu_b$  may be approximated in  $\mathcal{O}(N_h^{1/2}N_p)$ flops via an inverse iteration (with shifts). The same is true for $\mu_c$ and $\mu_d$ but in $\mathcal{O}(N_h^{1/2}N_p^2)$ flops. The result then follows by deconstructing each component of the setup and execution of the ADI algorithm in \cref{th:ADI}. The most expensive part of the algorithm is the $\ell_{\mathrm{max}}$ solves of \cref{eq:adi2:3} and \cref{eq:adi2:4} for each Fourier mode. Thanks to the optimal complexity reverse Cholesky factorization discussed in \cref{sec:factorization}, we have that each solve requires $\mathcal{O}(N_h N_p^2)$ flops.  \cref{prop:eigs} reveals that $\mathcal{O}(\ell_{\mathrm{max}}) = \mathcal{O}(\log (N_h^{1/4} N_p) \log \epsilon^{-1})$. Thus the $\ell_{\mathrm{max}}$ solves requires $\mathcal{O}(N_h N_p^2 \log (N_h^{1/4} N_p) \log \epsilon^{-1})$ flops. This must be conducted over $\mathcal{O}(N_p)$ Fourier modes resulting in an  \newline $\mathcal{O}(N_h N_p^3 \log (N_h^{1/4} N_p) \log \epsilon^{-1})$ complexity solve.
\end{proof}

\begin{figure}[h!]
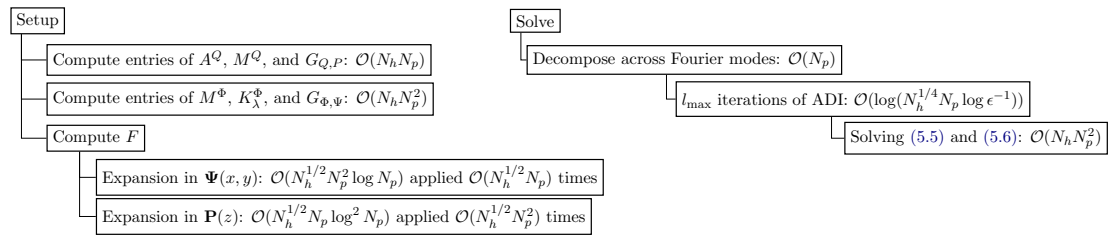

\centering
\includestandalone[width=\textwidth]{figures/flowchart-all}
\caption{Complexity flowchart of the setup and solve for the 3D screened Poisson equation of \cref{sec:examples:ADI}. The overall complexity of the setup is $\mathcal{O}(N_h N_p^3\log^2 N_p)$ and the solve is $\mathcal{O}(N_h N_p^3 \log (N_h^{1/4} N_p) \log \epsilon^{-1})$.}\label{fig:adi-complexity}
\end{figure}

\section{Examples}
\label{sec:examples}

In  this section we utilize the hierarchical FEM basis introduced in this paper for disk and annulus domains to approximately solve a number of equations. When measuring the errors, we measure the $\ell^\infty$-norm on a heavily over-resolved Zernike (annular) grid on each cell in the mesh.

\textbf{Data availability:} The numerical experiments were conducted in Julia and heavily rely on a number of packages \cite{ClassicalPoly.jl2023,SemiPoly.jl2023,MulPoly.jl2023,RadialPoly.jl2023,PiecewisePoly.jl2023,FastTransforms.jl2023}.  For reproducibility, an implementation of the hierarchical basis as well as scripts to generate the plots and solutions can be found at SparseDiskFEM.jl \cite{SparseDiskFEM.jl2023}.  The version used in this paper is archived on Zenodo \cite{sparsediskfem.jl-zenodo}.

\subsection{Plane wave with discontinuous coefficients and data}
\label{sec:examples:plane-wave}

The first example we consider is a plane wave problem with radial discontinuities in both the right-hand side $f(x,y)$ and the Helmholtz coefficient $\lambda(r)$. Consider $\rho$, $\lambda_0$, $\lambda_1 > 0$ and define:
\begin{align}
\tilde{u}(r)
=
\frac{1}{4} \times \begin{cases}
 \lambda_0 r^2 + (\lambda_1 - \lambda_0) \rho^2 - \lambda_1 + 2(\lambda_0 - \lambda_1)\rho^2 \log(\rho)  & \text{if} \;\; 0\leq r \leq \rho, \\
\lambda_1 r^2 - \lambda_1 +2 (\lambda_0 - \lambda_1) \rho^2\log(r) & \text{if} \;\; \rho< r \leq 1.
\end{cases}
\label{eq:tildeu}
\end{align}
In this example we choose the Helmholtz coefficient:
\begin{align}
\lambda(r) = \Delta \tilde{u}(r)
=
\begin{cases}
\lambda_0 & \text{if} \;\; 0\leq r \leq \rho, \\
\lambda_1 & \text{if} \;\; \rho< r \leq 1.
\end{cases}
\end{align}
Note that $\tilde{u}(r)$ is twice differentiable (but the second derivative is not continuous) and $\tilde{u}(1) = 0$. Consider a unit disk domain $\Omega_0$ and fix the parameters as $\rho=1/2$, $\lambda_0 = 10^{-2}$, $\lambda_1 = 50$.  Let $u_e(x,y) = \sin(50x)\tilde{u}(r)$ and consider the right-hand side $f(x,y) = [-\frac{1}{50} \Delta + \lambda(r)] u_e(x,y)$. Our goal is to recover the exact known solution $u_e(x,y)$ by approximately finding $u \in H^1_0(\Omega_0)$ that satisfies the screened Poisson equation:
\begin{align}
\frac{1}{50} \langle \nabla v, \nabla u \rangle_{L^2(\Omega_0)} +  \langle  v, \lambda u \rangle_{L^2(\Omega_0)} = \langle v, f \rangle_{L^2(\Omega_0)}.
\label{eq:helmholtz-plane-wave}
\end{align}
We mesh the unit disk domain with $\mathcal{T}_h = \{ 0 \leq r \leq 1/2\} \cup \{ 2^{-(j+1)/9} \leq r \leq 2^{-j/9}\}_{j \in \{0, 1,\dots,9\}}$ culminating in $N_h = 10$ cells. We then compute the load vector from $f(x,y)$ as described in \cref{sec:load-vector} and compute the stiffness and weighted mass matrices as described in Sections \labelcref{sec:mass-and-stiffness} and \labelcref{sec:assembly}. The resultant matrix is block diagonal where the blocks correspond to the Fourier mode decoupling and have a $B^3$-Arrowhead matrix structure. Hence, we solve for each block individually via a reverse Cholesky factorization (as described in \cref{sec:factorization}) for an optimal complexity solve: $\mathcal{O}(N_h N_p^2)$.

We plot the right-hand side $f$ and the approximated solution $u$ of \cref{eq:helmholtz-plane-wave} on the whole domain $\Omega_0$ as well as a slice at $\theta = 0.6168$ in \cref{fig:plane-wave-plots}. Note the discontinuity of the right-hand side at $r=1/2$. We examine the convergence of the discretization with a fixed mesh but as $N_p \to \infty$ in \cref{fig:plane-wave-convergence}. After an initial plateau, we observe spectral convergence as we simultaneously increase $N_p$ on each element.

\begin{figure}[h!]
\centering
\includegraphics[width =0.49 \textwidth]{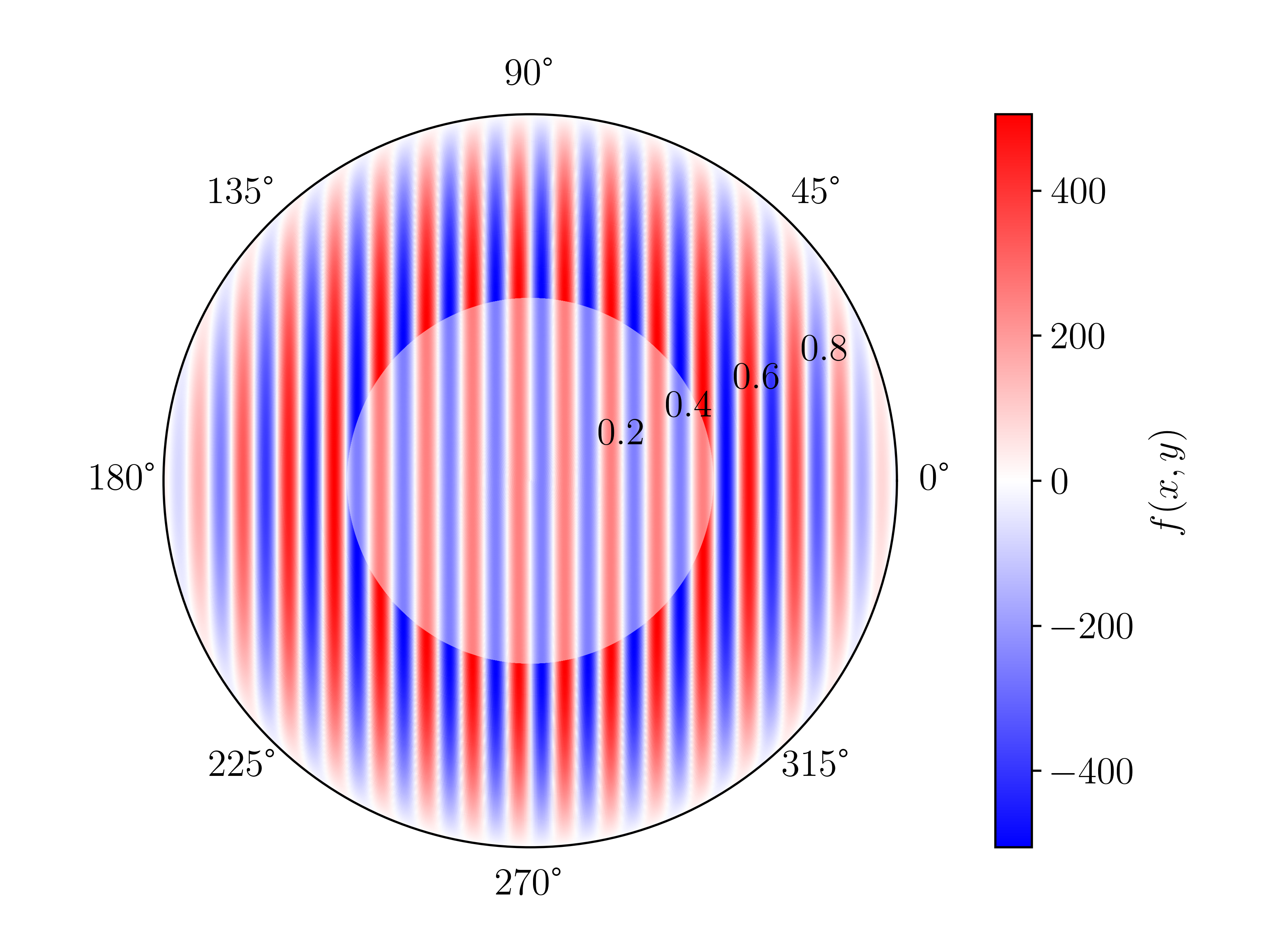} 
\includegraphics[width =0.49 \textwidth]{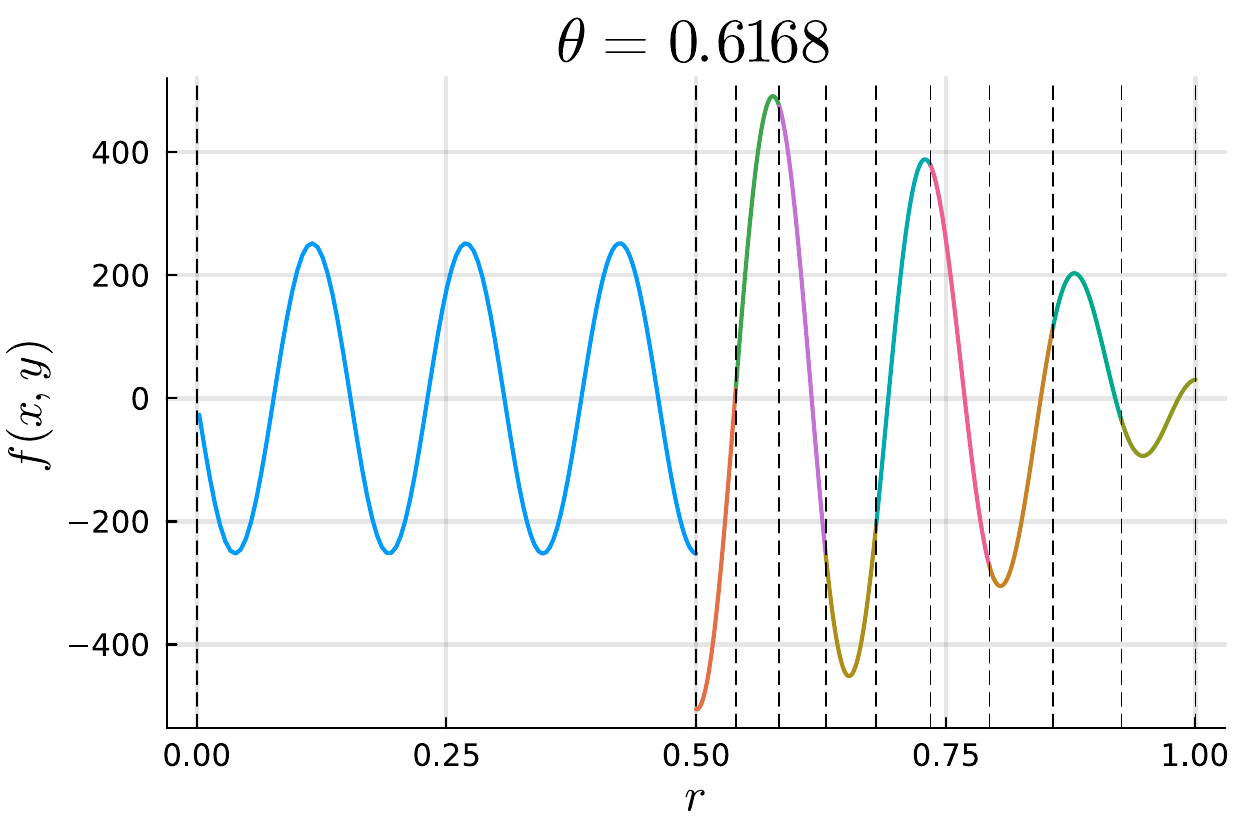} \\
\includegraphics[width =0.49 \textwidth]{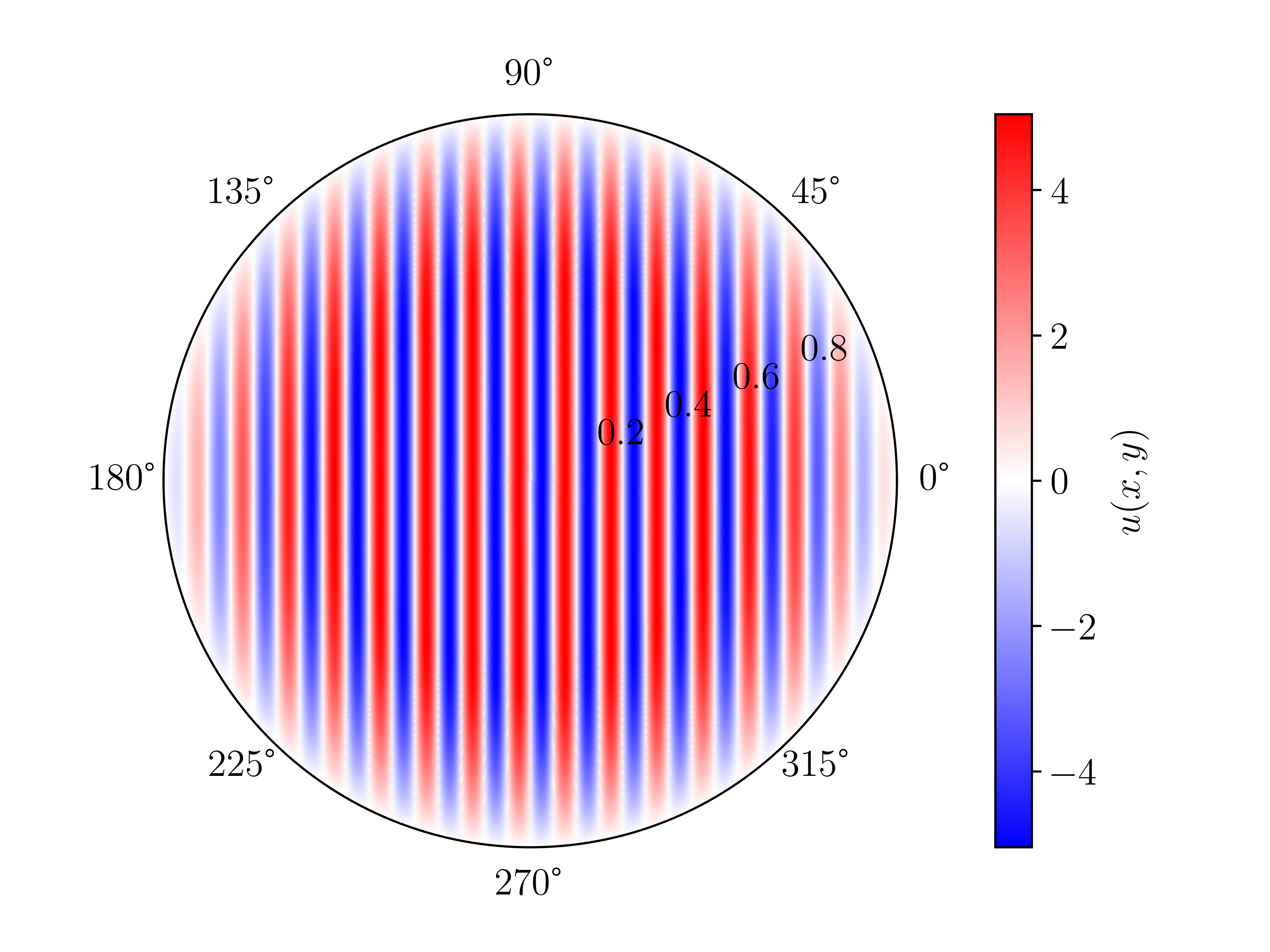} 
\includegraphics[width =0.49 \textwidth]{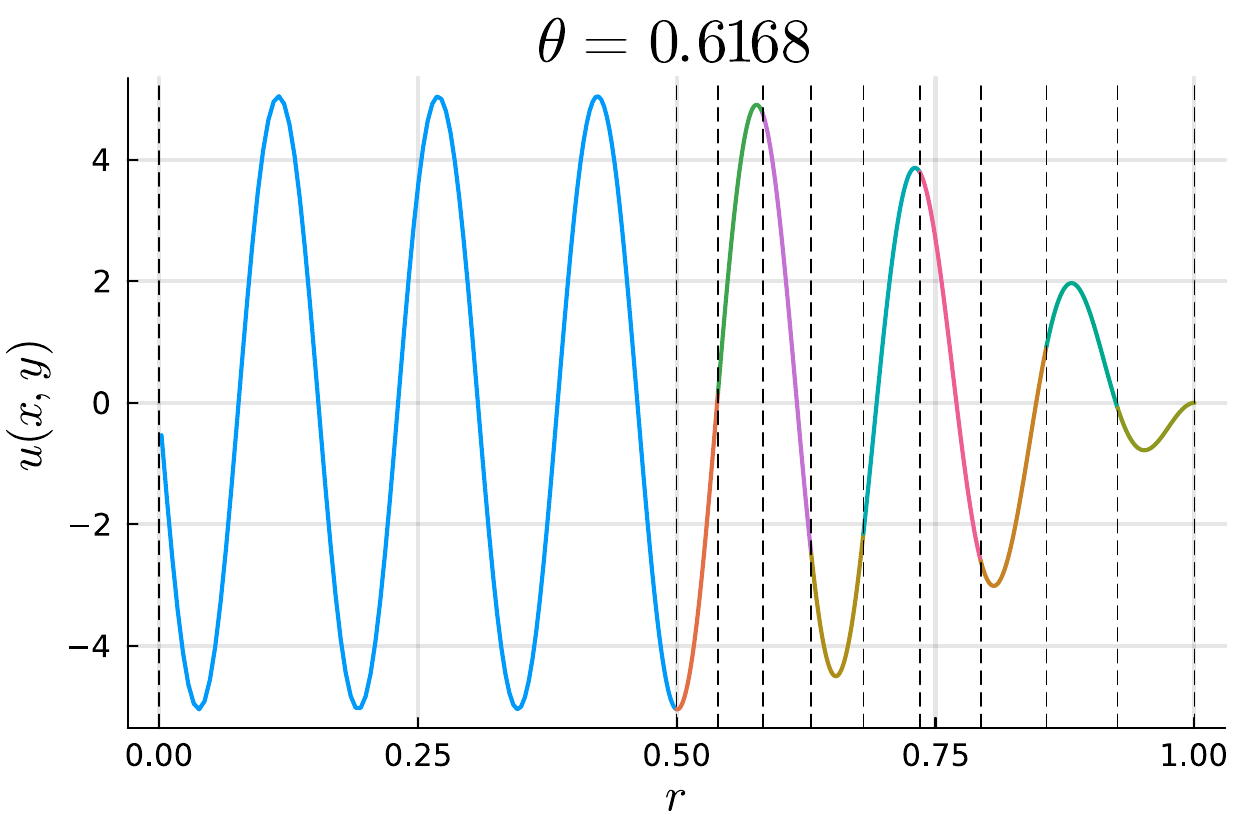}
\caption{Plots of the right-hand side $f(x,y)$ (top) and solution $u(x,y)$ (bottom) of the plane wave problem of \cref{sec:examples:plane-wave}. The black vertical lines in the slice plots indicate the edges of the cells in the mesh. The right-hand side has a radial discontinuity at $r=1/2$. Nevertheless the sparse $hp$-FEM accurately captures the jump and enforces the necessary continuity in the solution.}
\label{fig:plane-wave-plots}
\end{figure}

\begin{figure}[h!]
\centering
\includegraphics[width =0.49 \textwidth]{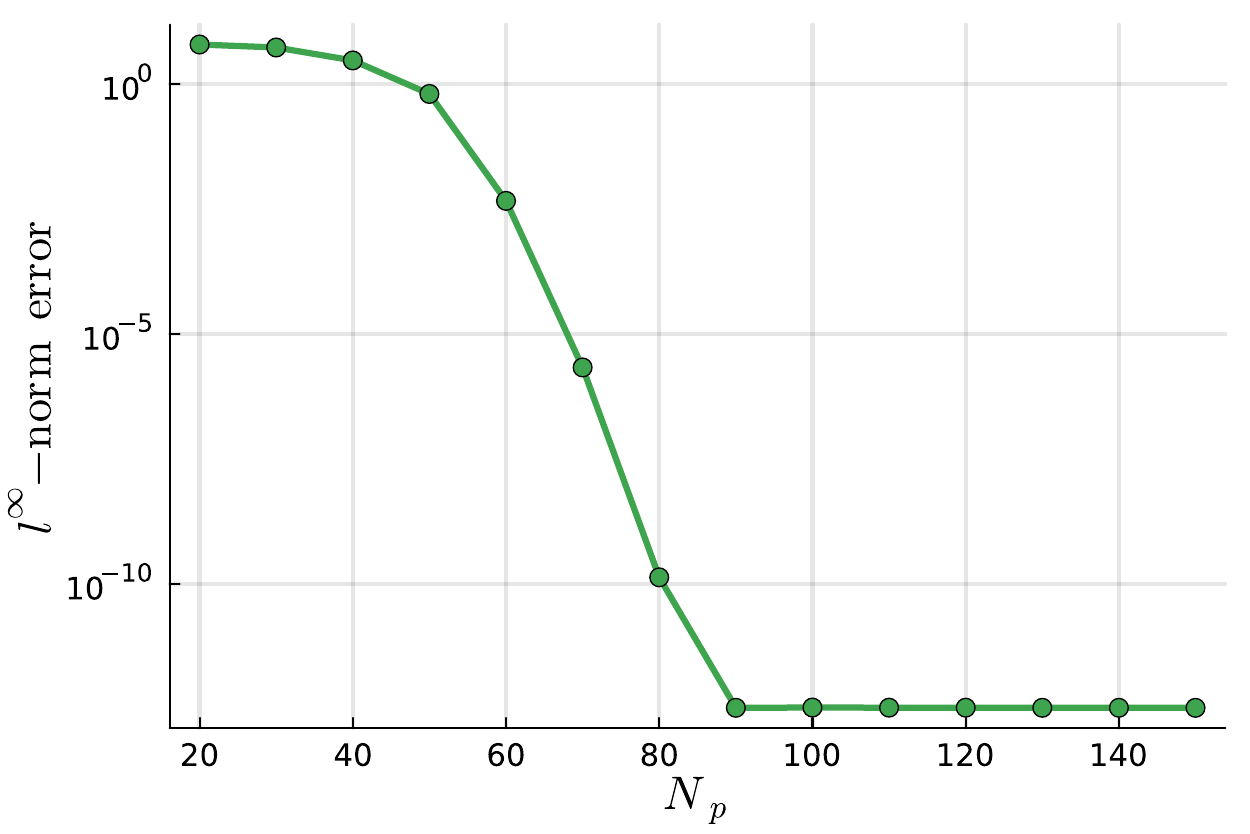} 
\caption{A semi-log convergence plot of the $\ell^\infty$-norm error of the hierarchical basis for the plane wave problem of \cref{sec:examples:plane-wave} with increasing polynomial degree $N_p$ on each of the 10 cells in the mesh. The plot indicates spectral convergence when $N_p > 50$ despite the radial discontinuities in $\lambda(r)$ and $f(x,y)$.}
\label{fig:plane-wave-convergence}
\end{figure}

\subsection{High frequency with a discontinuous Helmholtz coefficient}
\label{sec:examples:high-frequency}

In this example we consider the indefinite Helmholtz equation \cref{eq:helmholtz} on the unit disk domain $\Omega_0$. We pick a Helmholtz coefficient and a right-hand side with radial discontinuities at $r=1/2$:
\begin{align}
\lambda(r)
=
\begin{cases}
-80^2 & \text{if} \;\; 0 \leq r \leq 1/2,\\
-90^2 & \text{if} \;\; 1/2 < r \leq 1,
\end{cases}
\;\; \text{and} \;\;
f(x,y)
=
\begin{cases}
 2\sin(200x) & \text{if} \;\; 0 \leq r \leq 1/2,\\
\sin(100y) & \text{if} \;\; 1/2 < r \leq 1.
\end{cases}
\label{eq:high-freq}
\end{align}
We mesh the unit disk domain with $\mathcal{T}_h = \{ 0 \leq r \leq 1/2\} \cup \{ 2^{-(j+1)/11} \leq r \leq 2^{-j/11}\}_{j \in \{0, 1,\dots,11\}}$ culminating in $N_h = 12$ cells. We compute the entries of the matrices in the indefinite linear system \cref{eq:helmholtz-LA} as in the previous example. The resultant matrix is block diagonal where the blocks correspond to the Fourier mode decoupling. We solve for each block individually via a UL factorization with no pivoting for an optimal complexity solve: $\mathcal{O}(N_h N_p^2)$. We provide the spy plots of the $(m,j)=(175,1)$ Fourier mode submatrix of $A+M_\lambda$ and the UL factors in \cref{fig:high-frequency-spy} when $N_p = 200$. 

\begin{figure}[h!]
\centering
\subfloat[$(m,j)=(175,1)$ submatrix]{\includegraphics[width =0.32 \textwidth]{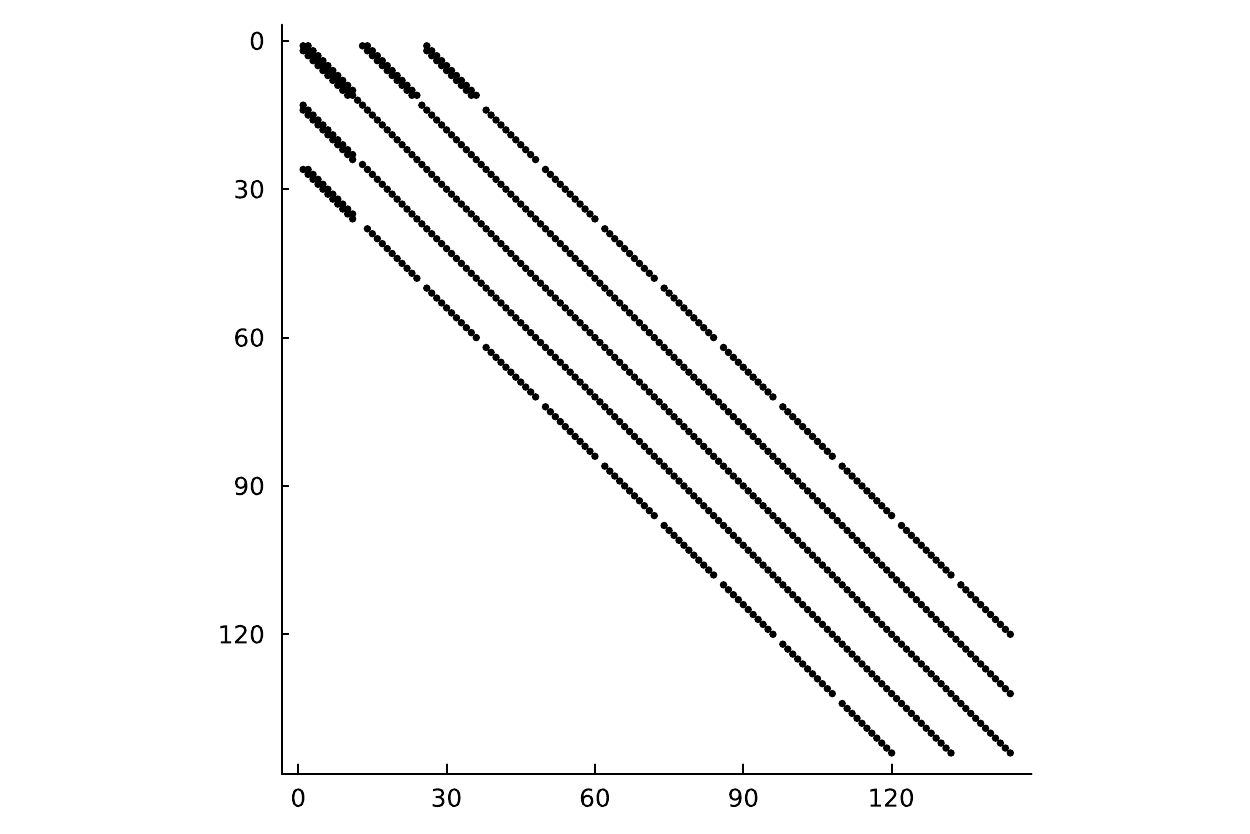}} 
\subfloat[reverse $L$ factor]{\includegraphics[width =0.32 \textwidth]{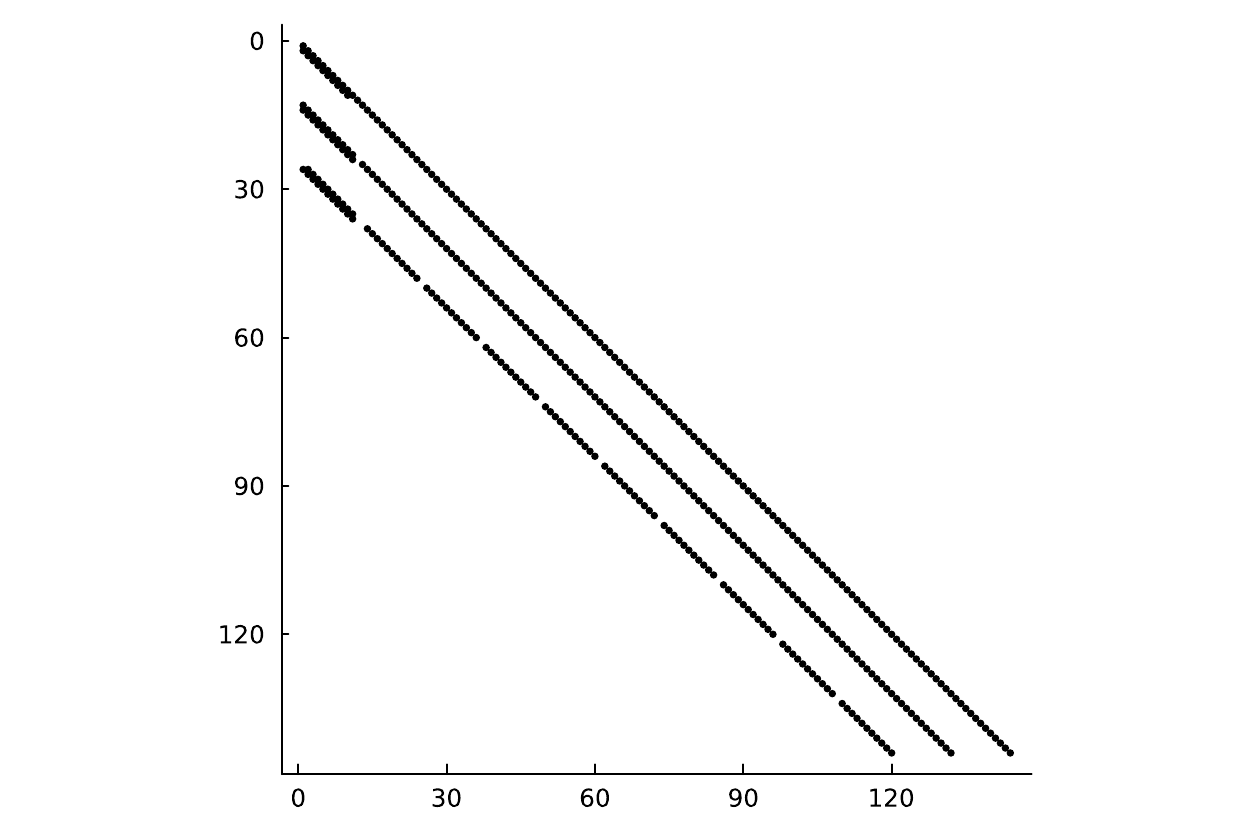}}
\subfloat[reverse $U$ factor]{\includegraphics[width =0.32 \textwidth]{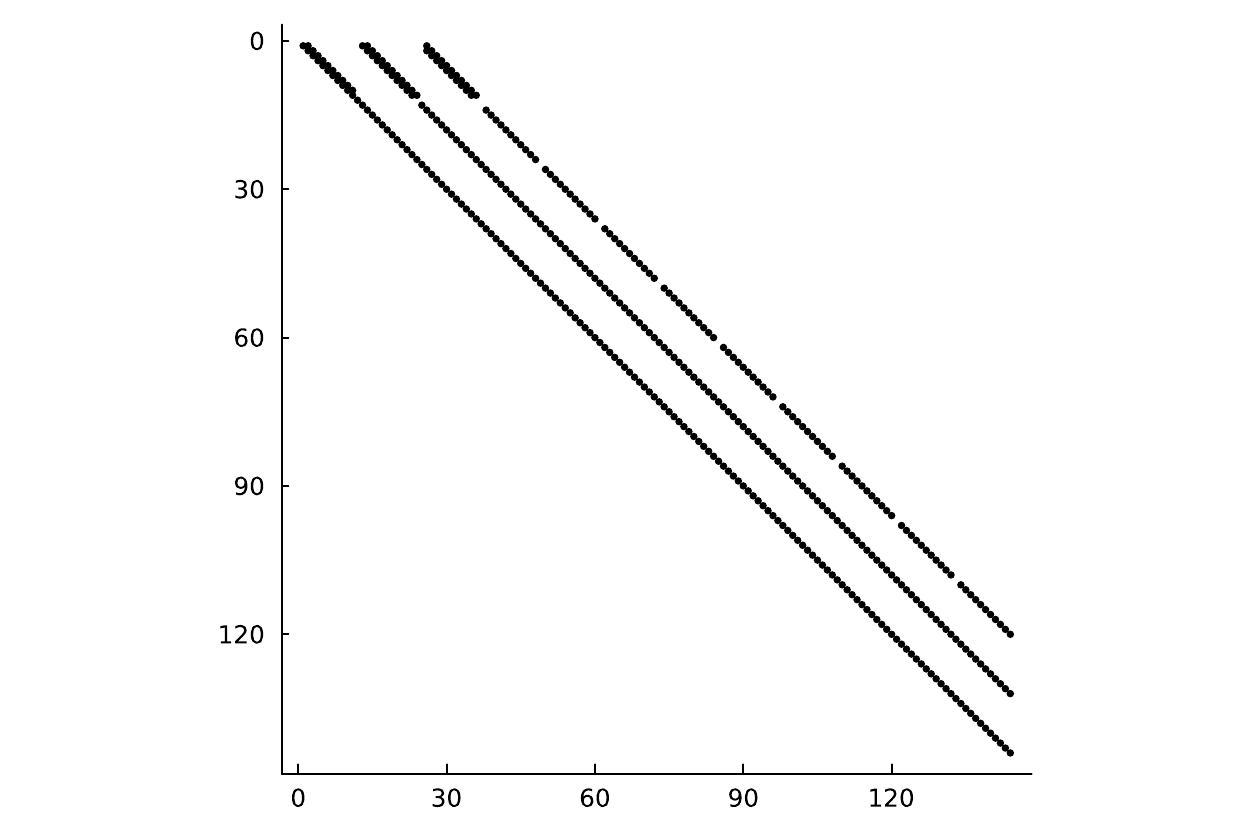}}
\caption{Spy plots of the $(m,j)=(175,1)$ Fourier mode submatrix of ${A+M_\lambda}$ and its UL factors (computed without pivoting) that arise in \cref{sec:examples:high-frequency}. The mesh contains 12 cells and the polynomial order is $N_p=200$ on each element. The UL factors are sparse, have no fill-in, and may be computed in linear complexity.}
\label{fig:high-frequency-spy}
\end{figure}

We do not have a closed-form expression for the exact solution of this problem. Hence, we measure the error against two over-resolved reference solutions. The first reference solution is computed via the sparse $hp$-FEM of this work. The second reference solution is computed via the SEM introduced in \cite[Sec.~6]{Papadopoulos2023} which discretizes the strong form of the equation. In this reference solution, the domain is meshed into the two cells $\Omega_0 = \Omega_{0,1/2} \cup \Omega_{1/2,1}$. A Zernike polynomial discretization is used in the inner disk cell and a Chebyshev--Fourier series discretization is used in the outer annular cell. Boundary conditions and continuity across the cells are enforced via a tau-method \cite[Sec.~6]{Papadopoulos2023}, see also \cite{Burns2020, Ortiz1969}. 

\begin{figure}[h!]
\centering
\includegraphics[width =0.49 \textwidth]{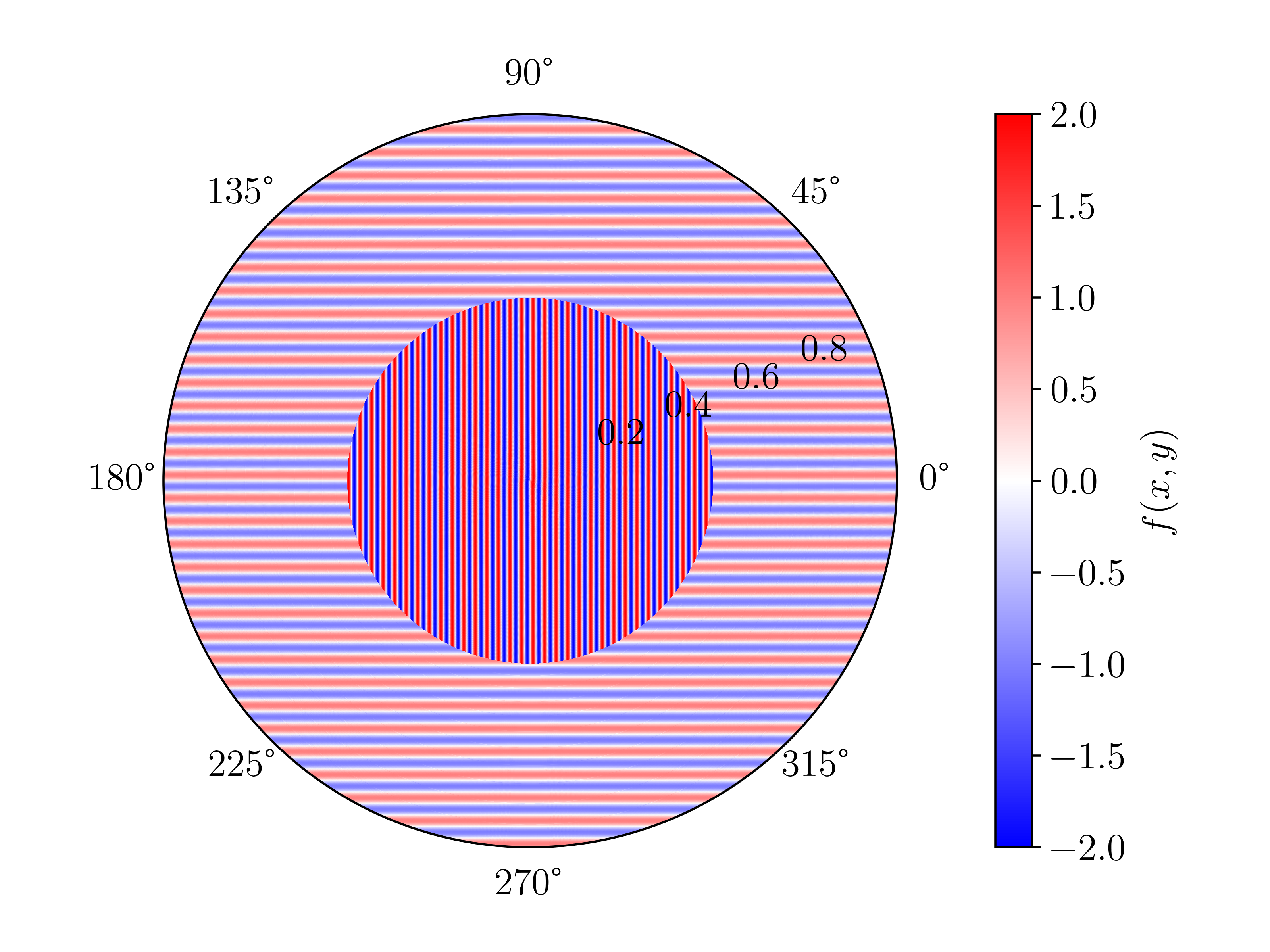} 
\includegraphics[width =0.49 \textwidth]{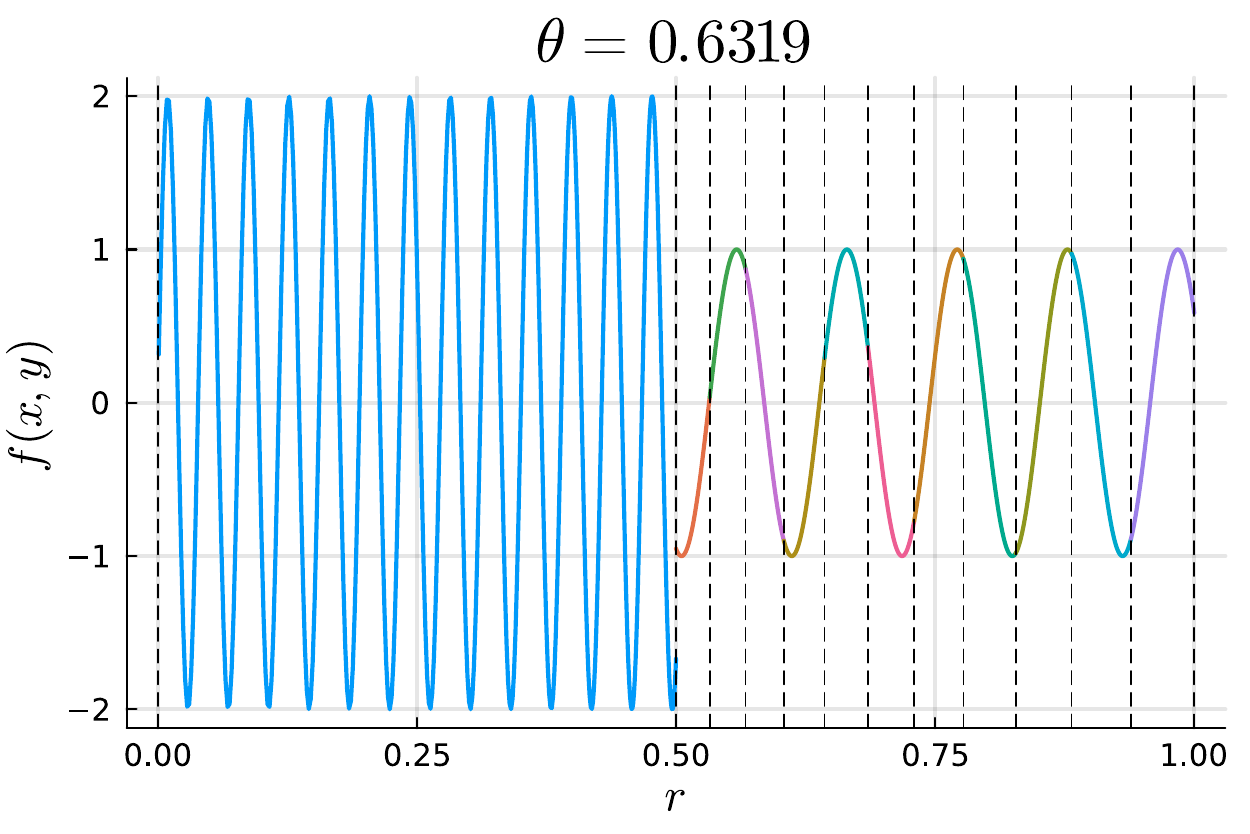} \\
\includegraphics[width =0.49 \textwidth]{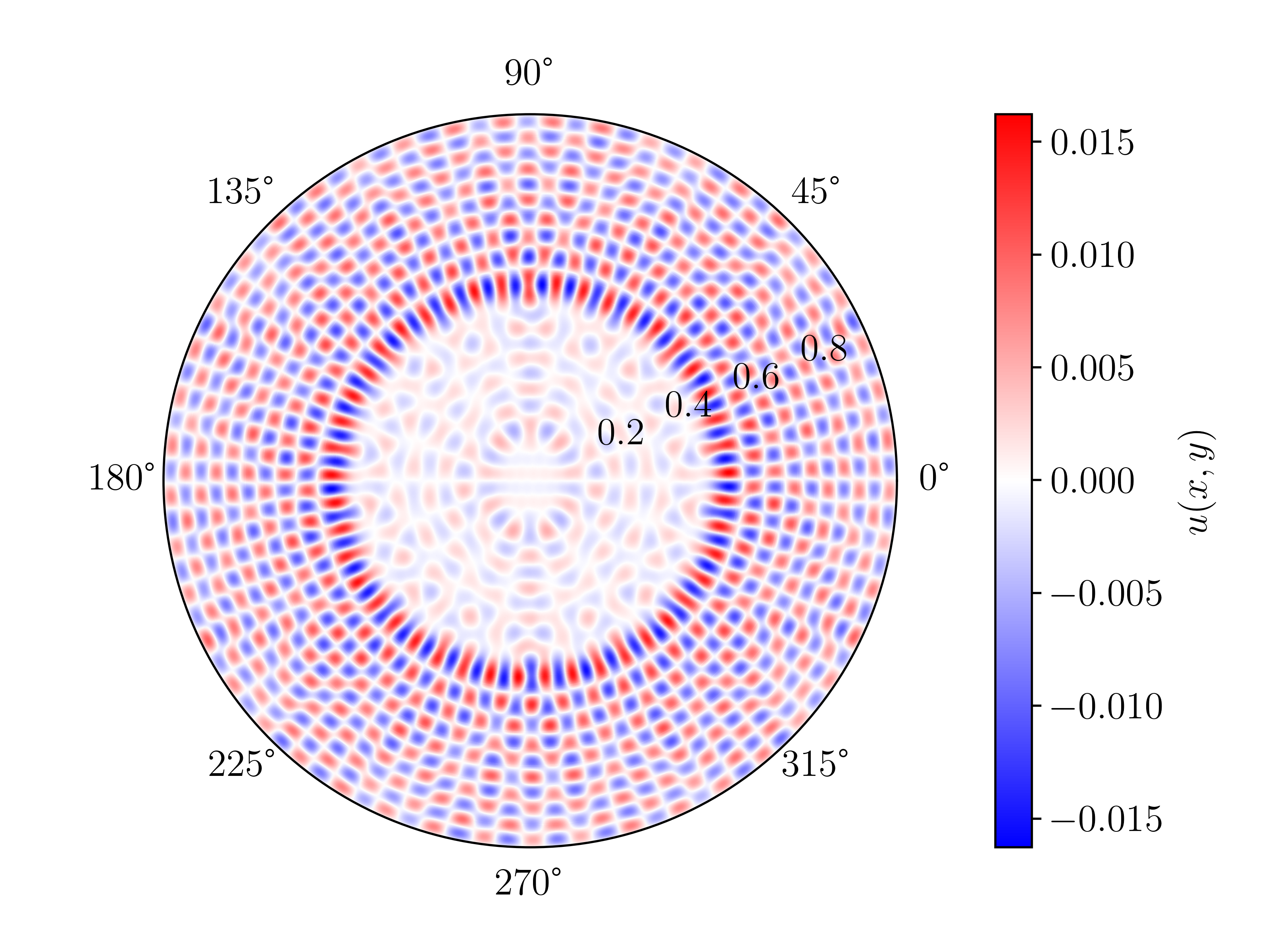} 
\includegraphics[width =0.49 \textwidth]{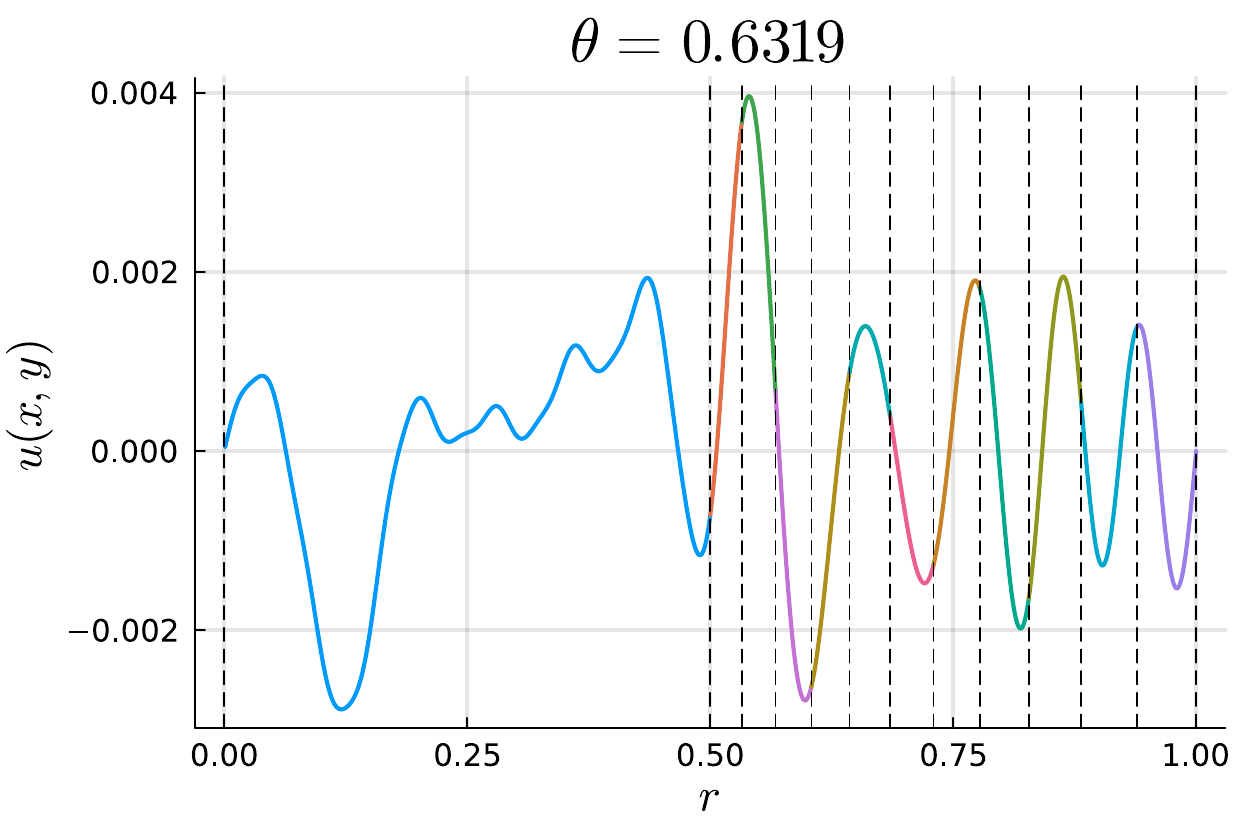}
\caption{Plots of the right-hand side $f(x,y)$ (top) and solution $u(x,y)$ (bottom) of the high frequency problem in \cref{sec:examples:high-frequency} with the Helmholtz coefficient and right-hand side as given in \cref{eq:high-freq}. The black vertical lines in the slice plots indicate the edges of the cells in the mesh. The right-hand side and Helmholtz coefficient have a radial discontinuity at $r=1/2$. Nevertheless the sparse $hp$-FEM accurately captures the jump and enforces the necessary continuity in the solution.}
\label{fig:high-frequency-plots}
\end{figure}

We plot the right-hand side $f$ and the approximated solution $u$ on the whole domain $\Omega_0$ as well as a slice at $\theta = 0.6319$ in \cref{fig:high-frequency-plots}. The negative Helmholtz coefficient causes oscillations to occur in the solution which are normally very difficult to capture. Moreover, one can spot the change in the behaviour of the solution as one crosses the radial discontinuity barrier at $r=1/2$. We plot the convergence of the sparse $hp$-FEM in \cref{fig:high-frequency-convergence}. After an initial period where the error decreases slowly, we observe spectral convergence for $N_p > 100$. Convergence is reached at $N_p = 160$. The error with respect to the reference SEM solution stagnates at $\mathcal{O}(10^{-12})$ with is due to the discretization error in the reference solution.

\begin{figure}[h!]
\centering
\includegraphics[width =0.49 \textwidth]{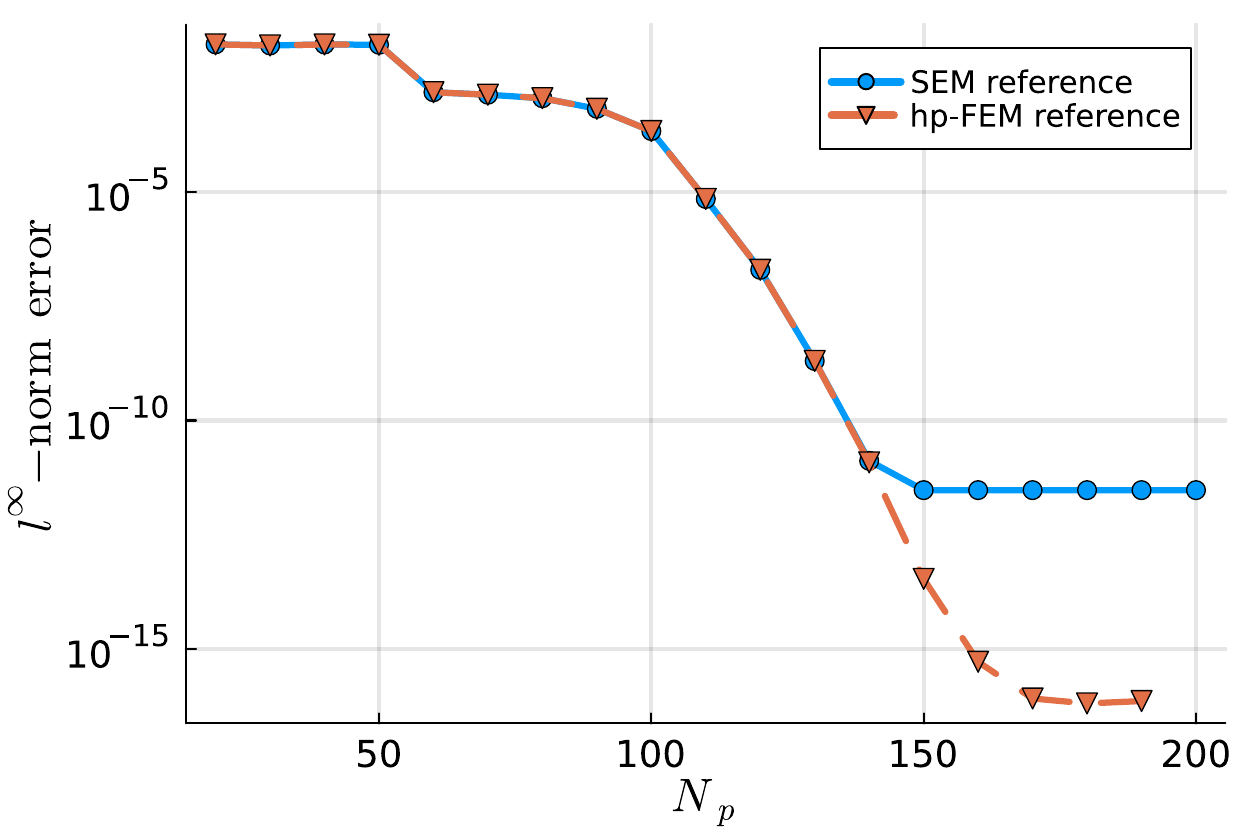} 
\caption{A semi-log convergence plot of the $\ell^\infty$-norm error of the continuous hierarchical basis for the high frequency problem in \cref{sec:examples:high-frequency} with increasing polynomial degree $N_p$ on each of the 12 cells in the mesh. After an initial plateau until $N_p = 100$, the plot indicates spectral convergence despite the radial discontinuities in the problem data and the high-frequency oscillations of the solution. The larger error plateau with respect to the reference SEM solution is due to the discretization error in the reference solution.}
\label{fig:high-frequency-convergence}
\end{figure}

\subsection{Time-dependent Schr\"odinger equation}
\label{sec:examples:schrodinger}

The time-dependent Schr\"odinger equation is one of the fundamental equations in quantum mechanics and describes how the wave function of a quantum system changes in response to the energy of the system. We consider the following form of the equation:

\begin{align}
\im \hbar \partial_t u(x,y,t) = \left(- \frac{\hbar^2}{2m} \Delta + V(r^2) \right) u(x,y,t), \;\; u(x,y,0) = u^{(0)}(x,y), \label{eq:sch:1}
\end{align}
where $t > 0$ and $(x,y) \in \mathbb{R}^2$. Here $\im$ is the imaginary unit, $\im^2=-1$, $\hbar$ is the reduced Planck constant, $m$ is the mass of the particle, and $V$ is the potential of the environment.  We assume $V$ depends on $r^2$ and is stationary. From here on, we choose the normalization constants $\hbar = 1$, $m=1/2$ and let $\lambda(r^2) = V(r^2)$. Thus \cref{eq:sch:1} reduces to
\begin{align}
\im \partial_t u(x,y,t) = \left(-\Delta + \lambda(r^2) \right) u(x,y,t), \;\; u(x,y,0) = u^{(0)}(x,y). \label{eq:sch:2}
\end{align}
\cref{eq:sch:2} is unitary and has the solution $u(x,y,t) = \E^{-\im t (-\Delta + \lambda)} u^{(0)}(x,y)$ which implies that 
\begin{align}
\|u(\cdot,\cdot,t)\|_{L^2(\mathbb{R}^2)} = \|u^{(0)}\|_{L^2(\mathbb{R}^2)} \;\; \text{for any} \;\; t > 0.  \label{eq:sch:3}
\end{align}
A favourable property for any temporal discretization of \cref{eq:sch:2} is that the energy is conserved, i.e.~\cref{eq:sch:3} holds. Provided the spatial discretization leads to a symmetric linear system, then the Crank--Nicolson method is the simplest temporal discretization that preserves energy and to which we restrict our investigations. High-order temporal discretizations will be considered in future work \cite{Hochbruck2010, Bader2016}.

Closed form expressions for the solutions of \cref{eq:sch:2} are difficult to find. When $\lambda(r^2) = r^2$ in \cref{eq:sch:2}, the problem is known as the quantum harmonic oscillator. Here, the eigenfunctions of the operator $(-\Delta + r^2)$ are known and take the form \cite[Eq.~(17)]{Wiss2015}:
\begin{align}
(-\Delta + r^2) \psi_{n,m}(x,y) = E_{n,m} \psi_{n,m}(x,y),
\label{sec:examples:sch:eig} 
\end{align}
where $\psi_{n,m}(x,y) = H_{n}(x) H_{m}(y) \exp(-(x^2+y^2)/2)$ and $E_{n,m} = 2(n+m+1)$. Here $H_n$, $n \in \mathbb{N}_0$, denote the orthonormalized Hermite polynomials \cite[Sec.~18.3]{dlmf}. Thus if $u^{(0)}(x,y) = \psi_{n,m}(x,y)$, then the solution of \cref{eq:sch:2} is $u(x,y,t) = \E^{-\im E_{n,m} t} \psi_{n,m}(x,y)$.

As the domain in \cref{eq:sch:2} is $\mathbb{R}^2$, we truncate the domain to the disk $\bar \Omega = \{ 0 \leq r \leq 50\}$. We mesh the domain with the 16 cells $\mathcal{T}_h =  \{ 0 \leq r \leq 50 (6/5)^{-15} \} \cup \bigcup_{j = 0}^{14} \{ 50 (6/5)^{j+1} \leq r \leq 50 (6/5)^{j} \} $. We discretize \cref{eq:sch:2} in the time variable with the Crank--Nicolson method with the uniform time step $\delta t$. We consider the final time $T = 2\pi/E_{20,21}$ which corresponds to one full period of oscillation of the solution, $u(x,y,T) = u(x,y,0)$. Rewritten in quasimatrix form, then at each time step $k = 0,1,2,\dots$, the time-stepping problem reduces to solving:
\begin{align}
(2M + \im \delta t (A + M_{r^2})) {\bf u}^{(k+1)} = (2M - \im \delta t (A + M_{r^2})) {\bf u}^{(k)}.
\label{eq:sch:4}
\end{align}
We consider the initial state $u^{(0)}(x,y) = \psi_{20,21}(x,y)$. We discretize in space with the continuous hierarchical FEM basis with truncation degree $N_p=100$ on each cell. This problem and discretization preserves the Fourier mode decoupling and, therefore, the matrices in \cref{eq:sch:4} are block diagonal with $2N_p+1$ blocks which may be decoupled into $2N_p+1$ independent linear systems. Each block permits a complex-valued UL factorization with no pivoting. Moreover, the complex-valued UL factors may be computed in linear complexity and are sparse with zero fill-in as observed in \cref{fig:schrodinger-spy}.

\begin{figure}[h!]
\centering
\subfloat[$(m,j)=(70,1)$ submatrix]{\includegraphics[width =0.32 \textwidth]{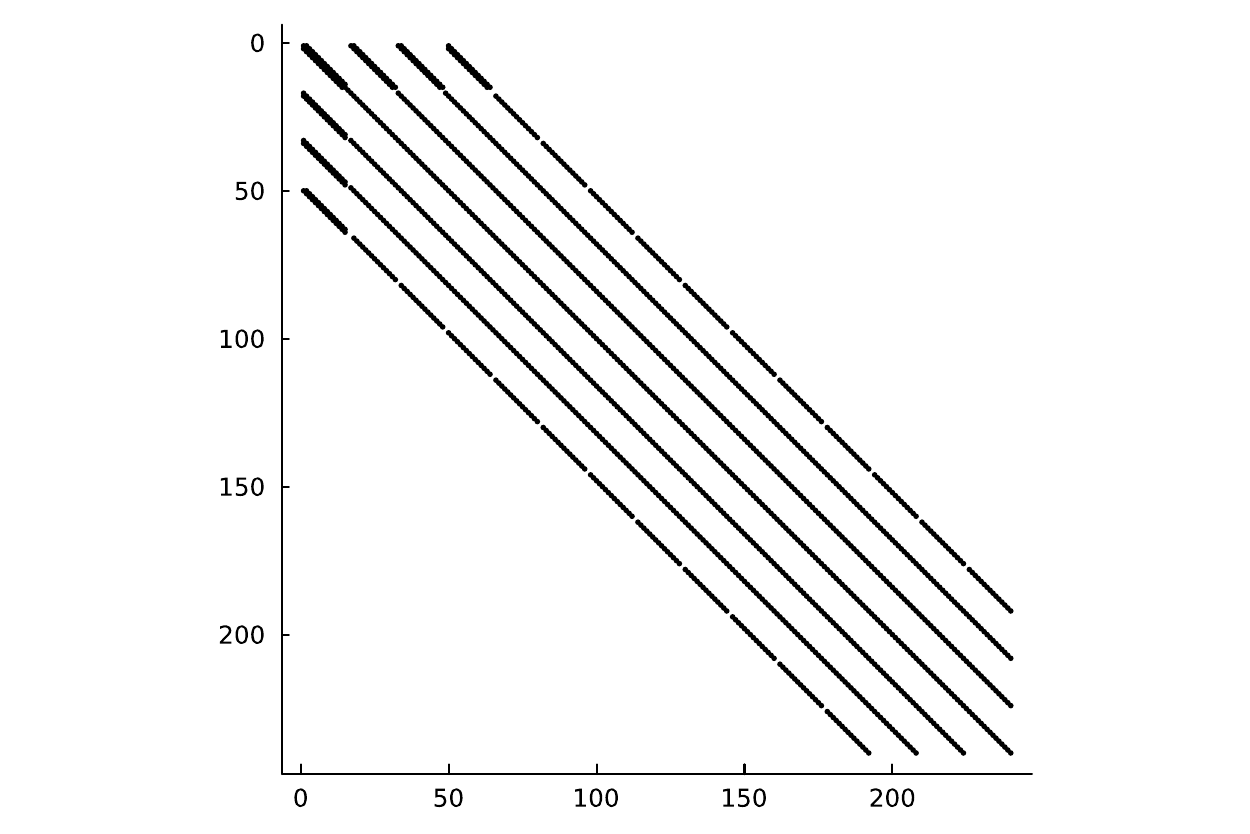}} 
\subfloat[reverse $L$ factor]{\includegraphics[width =0.32 \textwidth]{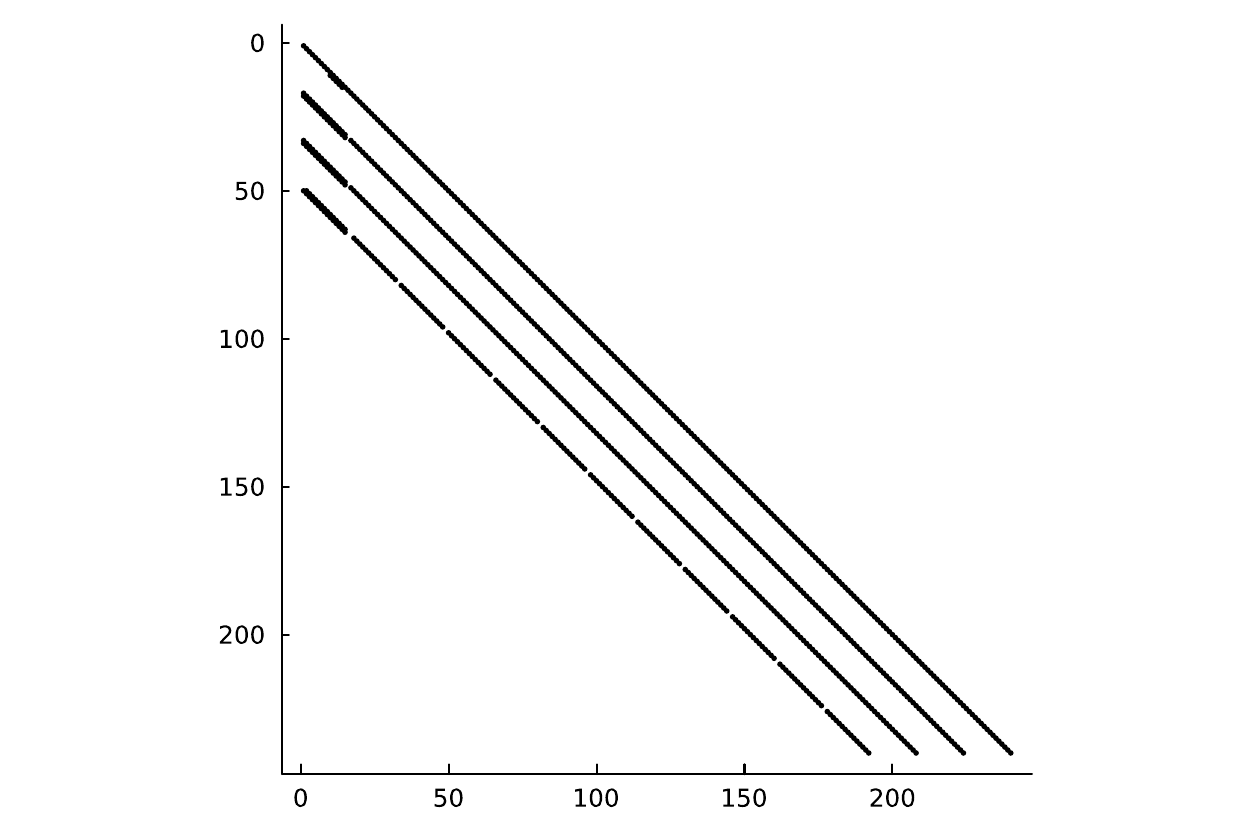}}
\subfloat[reverse $U$ factor]{\includegraphics[width =0.32 \textwidth]{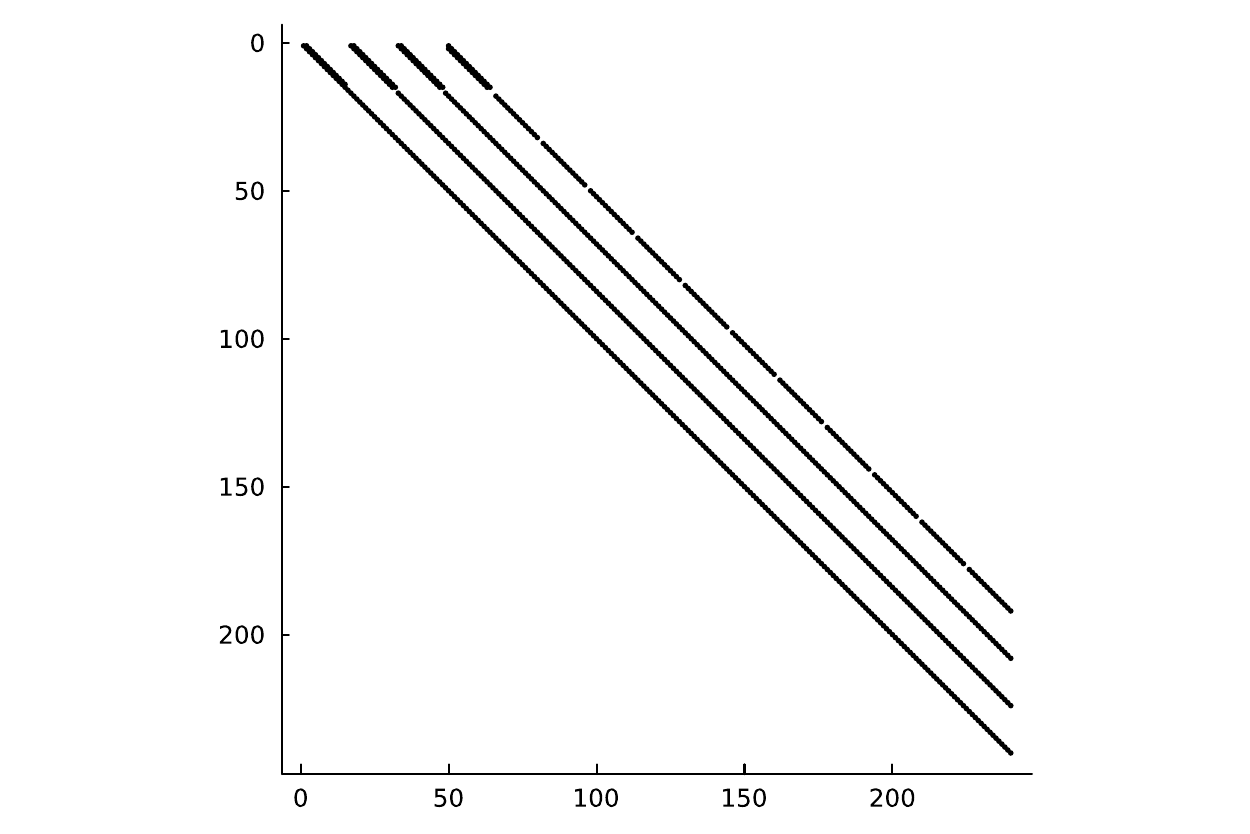}}
\caption{Absolute value spy plots of the $(m,j)=(70,1)$ Fourier mode complex-valued submatrix of $2M + \im \delta t (A + M_{r^2})$ and the complex-valued UL factors (computed without pivoting). The mesh contains 16 cells and the polynomial order is $N_p=100$ on each element. The UL factors are sparse, have no fill-in, and may be computed in linear complexity.}
\label{fig:schrodinger-spy}
\end{figure}

The initial state is plotted in \cref{fig:schrodinger-plots} and the spatial discretization yields an $\ell^\infty$-norm error of $7.94 \times 10^{-15}$. As discussed one must truncate the domain sufficiently large in order to sufficiently emulate an unbounded domain. However, since the initial state exponentially decays as $r \to \infty$, the initial state evaluates to below (double) machine precision for $r>9$. A one-cell discretization would struggle to sufficiently capture the oscillations of the initial state close to the origin. Our investigations revealed that a one-cell Zernike ${\bf Z}^{(0)}(x,y)$ discretization required a truncation degree $N_p=700$ to resolve the initial state to an $\ell^\infty$-norm error of $5 \times 10^{-15}$ on the domain $\Omega = \{ 0 \leq r \leq 50\}$.

A convergence plot is given in \cref{fig:schrodinger-convergence} for decreasing step size $\delta t$ where we measure the $\ell^\infty$-norm error at the final time step $T = 2\pi/E_{20,21}$ (which always had the largest error across all the time steps). We observe the expected $\mathcal{O}(\delta t^2)$ convergence. \cref{fig:schrodinger-convergence} displays a plot of the difference in the $L^2(\Omega)$-norm between the approximate solution at time step iterate $k$ and the discretization of the initial state for the finest temporal discretization where $\delta t = 5.75 \times 10^{-5}$. In other words we plot $|\| u^{(k)} \|_{L^2(\Omega)} - \| u^{(0)} \|_{L^2(\Omega)}|$ for $k = 1,2,\dots,1300$. We see that there is some loss of energy due to the floating point error. At each time step $|\| u^{(k+1)} \|_{L^2(\Omega)} - \| u^{(k)} \|_{L^2(\Omega)}|  \approx 10^{-13}$ and $1300 \times 10^{-13} \approx 10^{-10}$.

\begin{figure}[h!]
\centering
\includegraphics[width =0.32 \textwidth]{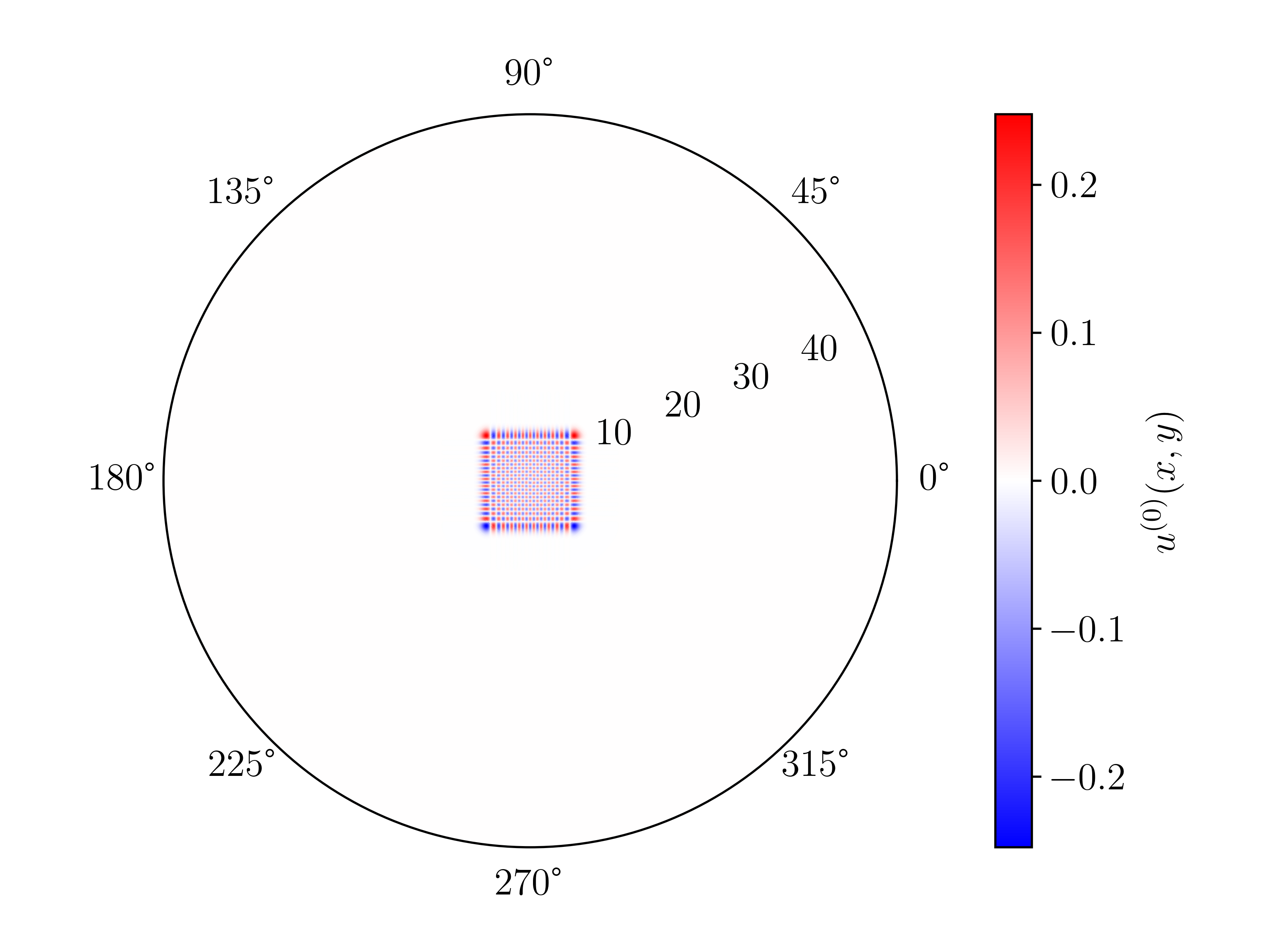}
\includegraphics[width =0.32 \textwidth]{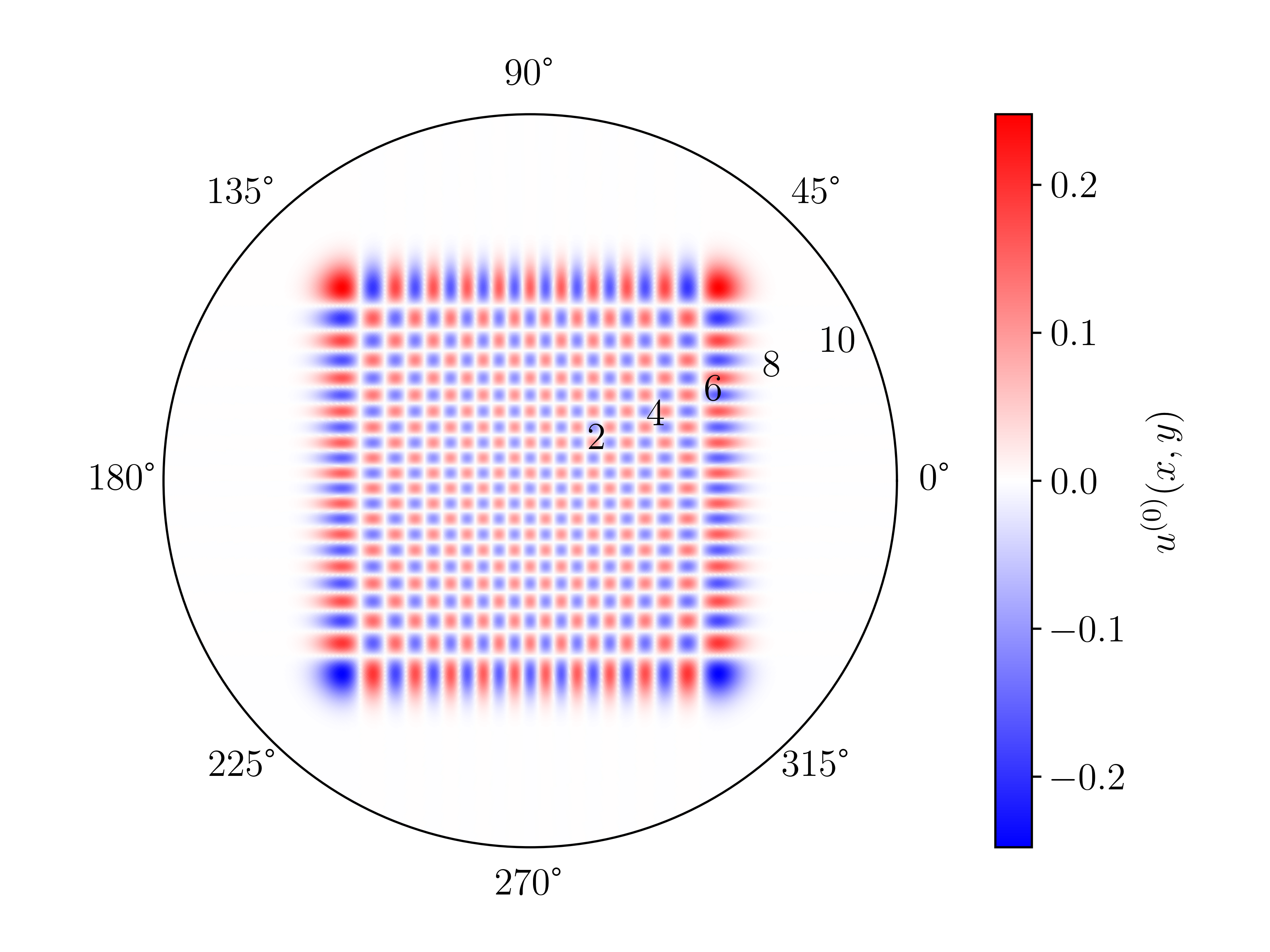}
\includegraphics[width =0.32 \textwidth]{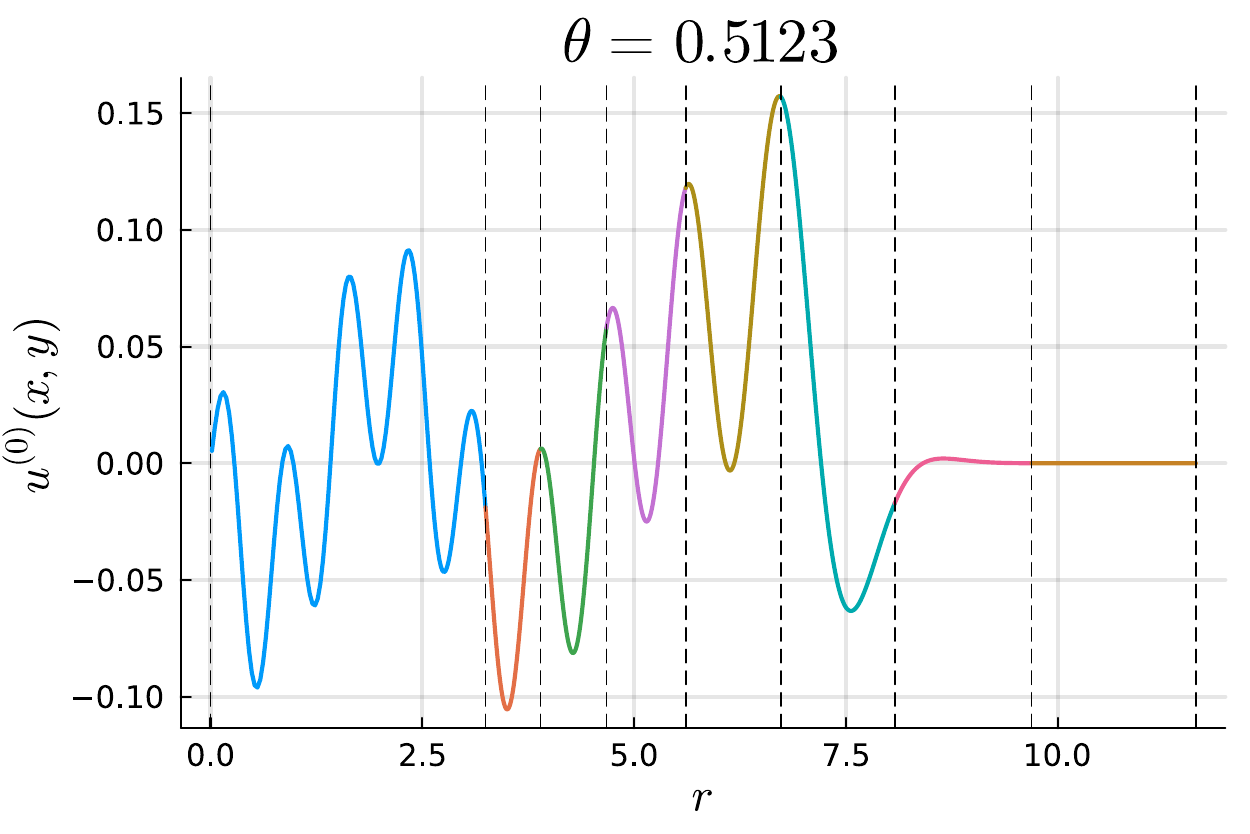}
\caption{Plots of the initial state $u^{(0)}(x,y)$ in the Schr\"odinger equation of \cref{sec:examples:schrodinger} on the whole domain (left) and zoomed in (middle). A slice of the initial state in the first 8 cells is also displayed (right). The black vertical lines in the slice plot indicates the edges of the cells.}
\label{fig:schrodinger-plots}
\end{figure}

\begin{figure}[h!]
\centering
\includegraphics[width =0.49 \textwidth]{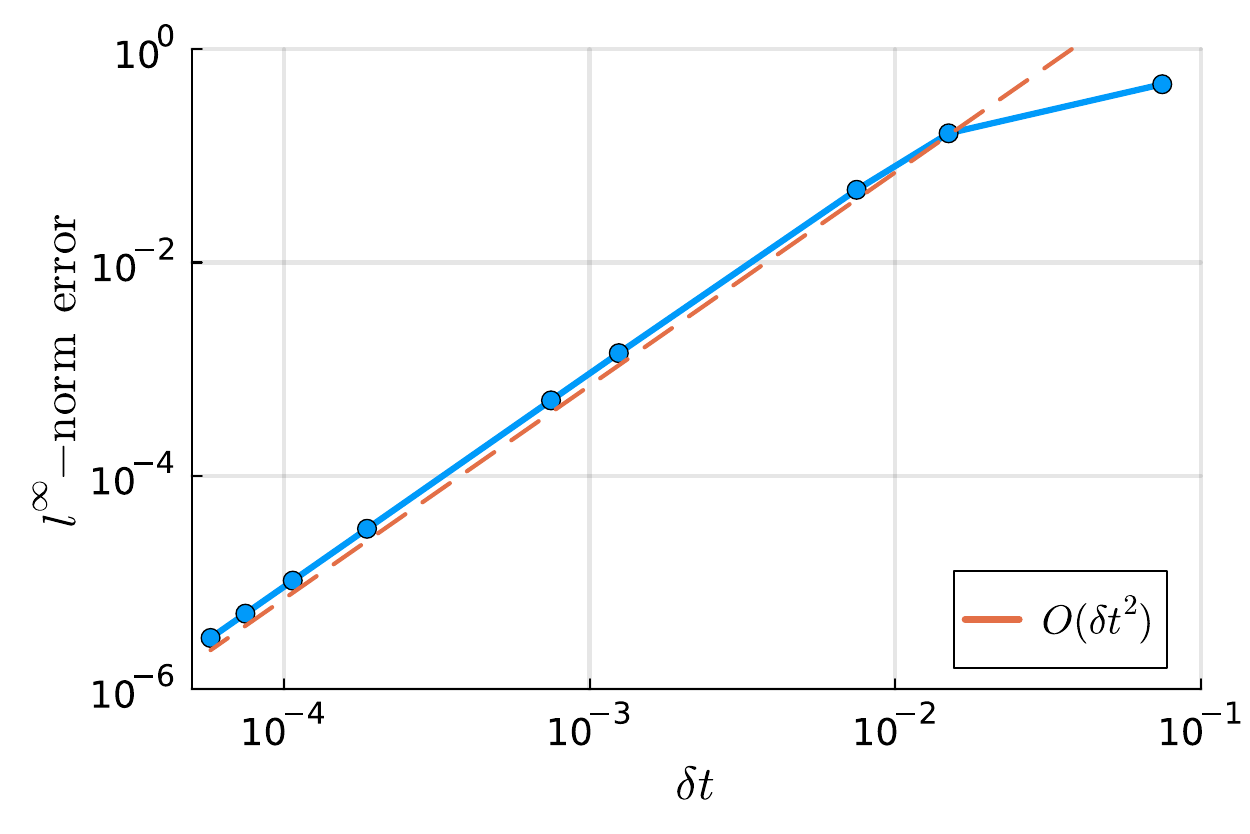}
\includegraphics[width =0.49 \textwidth]{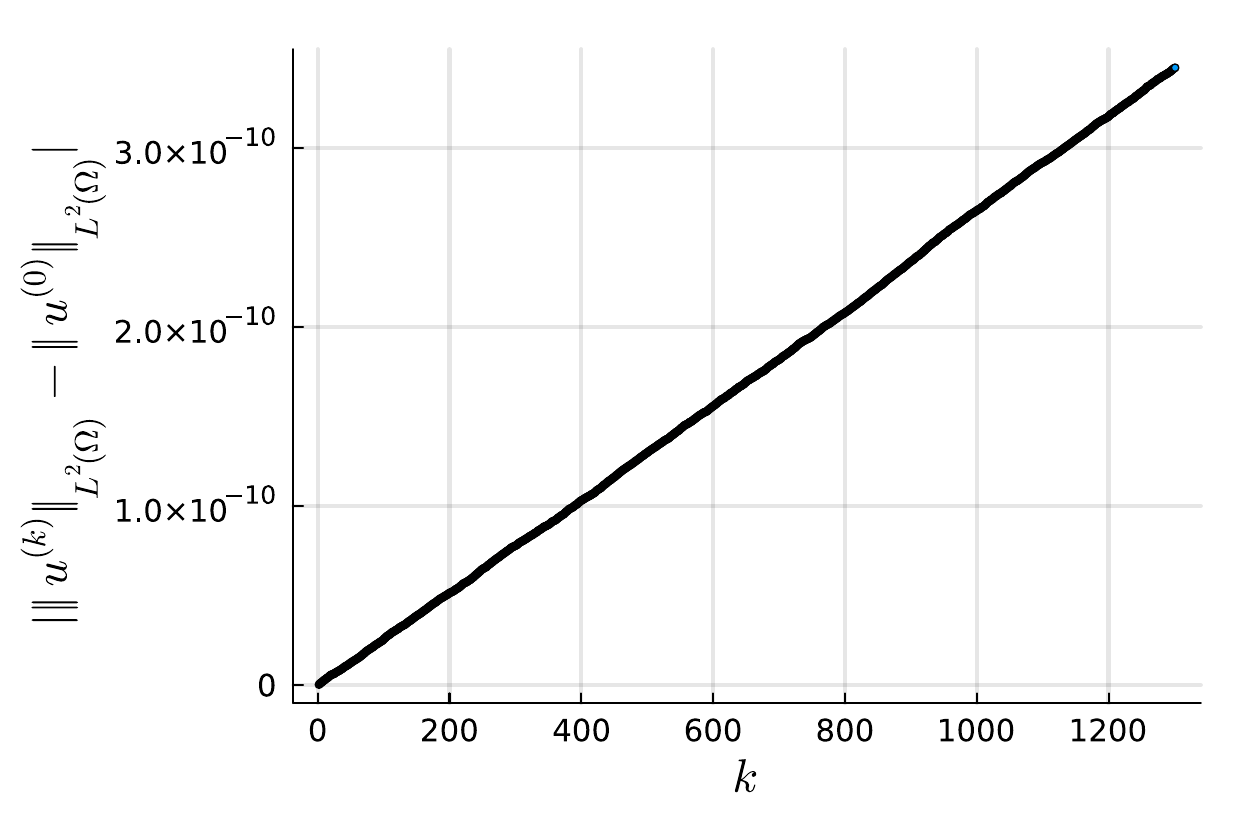}
\caption{(Left) A log-log convergence plot of the $\ell^\infty$-norm error at the final time step $T = 2\pi/E_{20,21}$ as $\delta t \to 0$ for the time-dependent Schr\"odinger equation described in \cref{sec:examples:schrodinger}. The domain is a disk with outer radius 50. It is meshed into 16 cells where we truncate at $N_p=100$ on each element. The plot indicates the expected $\mathcal{O}(\delta t^2)$ convergence of the Crank--Nicolson temporal discretization. (Right) The difference in the $L^2(\Omega)$-norm between a time step iterate and the discretization of the initial state when $\delta t = 5.75 \times 10^{-5}$.}
\label{fig:schrodinger-convergence}
\end{figure}

\subsection{Rotationally anisotropic coefficient}
\label{sec:examples:non-separable}

Here we consider an equation with a Helmholtz coefficient that is rotationally anisotropic, i.e.~$\lambda$ cannot be written as $\lambda(r)$. Let $\Omega = \{10^{-2} < r < 1\}$. We seek $u \in H^1_0(\Omega)$ that satisfies
\begin{align}
(-\Delta - 80^2 x ) u(x,y)  =
\begin{cases}
(1 + \E^{-12 x})\sin(50 x) & \text{if} \;\; 10^{-2}\leq r <1/2,\\
(1 + \E^{-6 x})\sin(50 y) & \text{if} \;\; 1/2 \leq r \leq 1.
\end{cases}
\label{eq:non-separable}
\end{align}
By leveraging the $x$-Jacobi matrix for Zernike annular polynomials (as discussed in \cref{sec:non-separable}), we are able to assemble the FEM linear system efficiently. Moreover, since the coefficient is a polynomial of degree one, we retain sparsity. However, the PDE operator is no longer rotationally invariant and  the resulting discretization matrix is not block diagonal unlike the previous examples. We mesh the domain into two annular cells at $\{10^{-2} \leq r \leq 1/2\}$ and $\{1/2 \leq r \leq 1\}$ which align with the radial discontinuity of the right-hand side. We then consider a discretization degree of $N_p=100$ on both cells, assemble the linear system and solve it via a sparse LU factorization. In \cref{fig:non-separable} we provide a spy plot of the FEM matrix as well as plots of the right-hand side and the solution. Note that when $x<0$, the  PDE operator is locally positive-definite but when $x>0$, we enter a Helmholtz regime. This is reflected in the solution where we notice a switch in the behaviour at $x=0$ from structured oscillations to highly unstructured oscillations akin to those in the solution of the high frequency problem in \cref{sec:examples:high-frequency}.

\begin{figure}[h!]
\centering
\includegraphics[width =0.32 \textwidth]{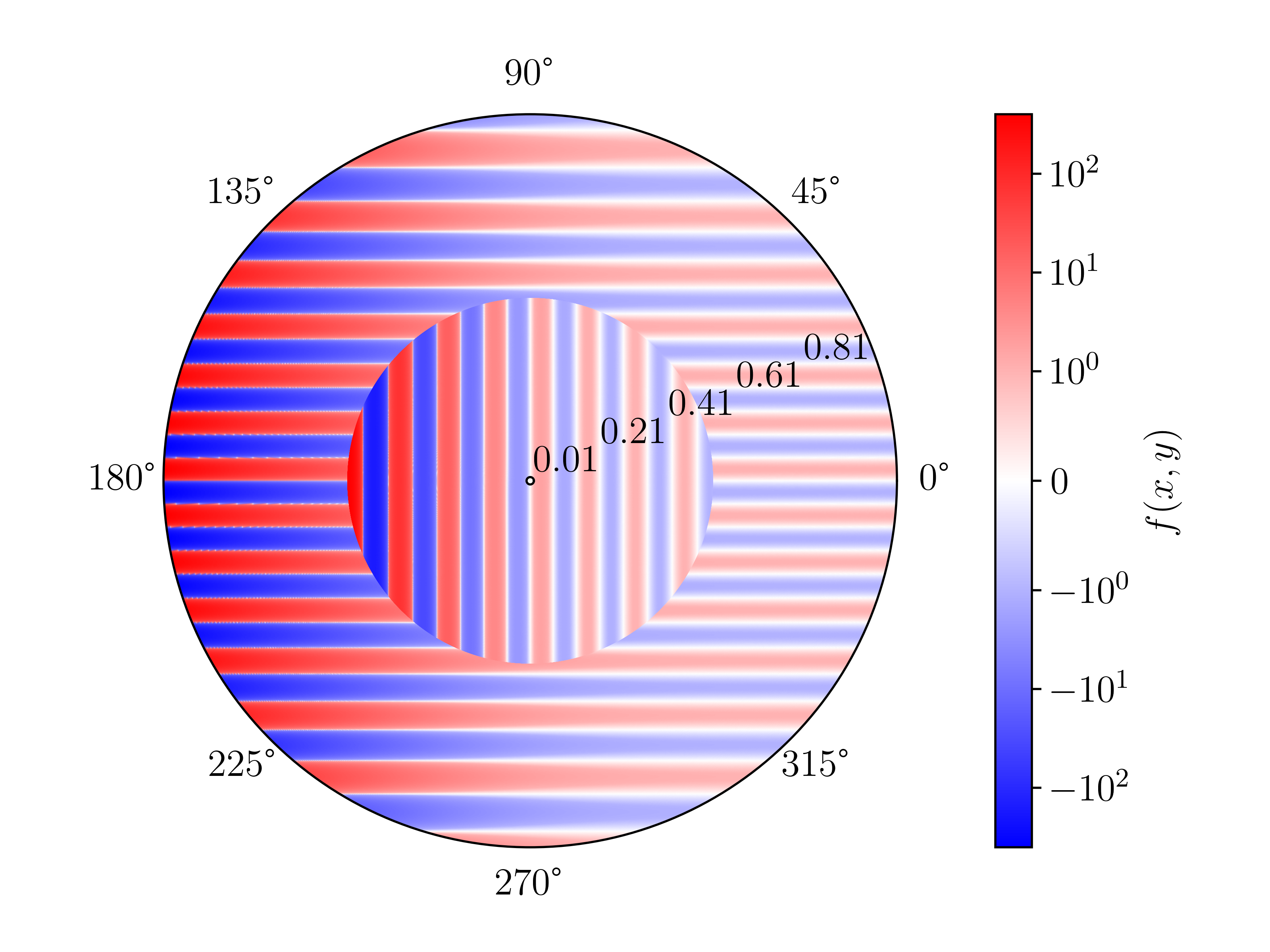}
\includegraphics[width =0.32 \textwidth]{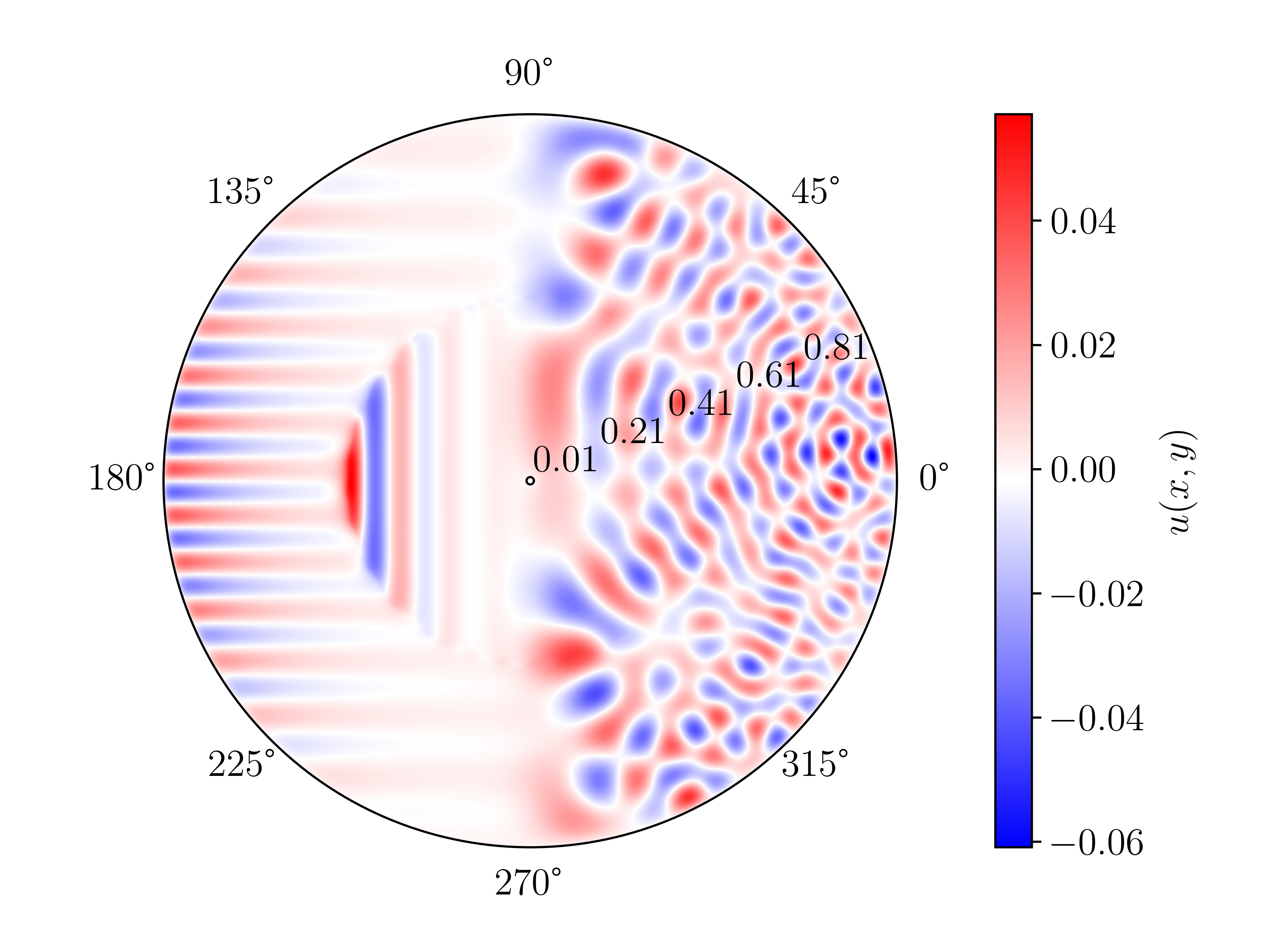}
\includegraphics[width =0.32 \textwidth]{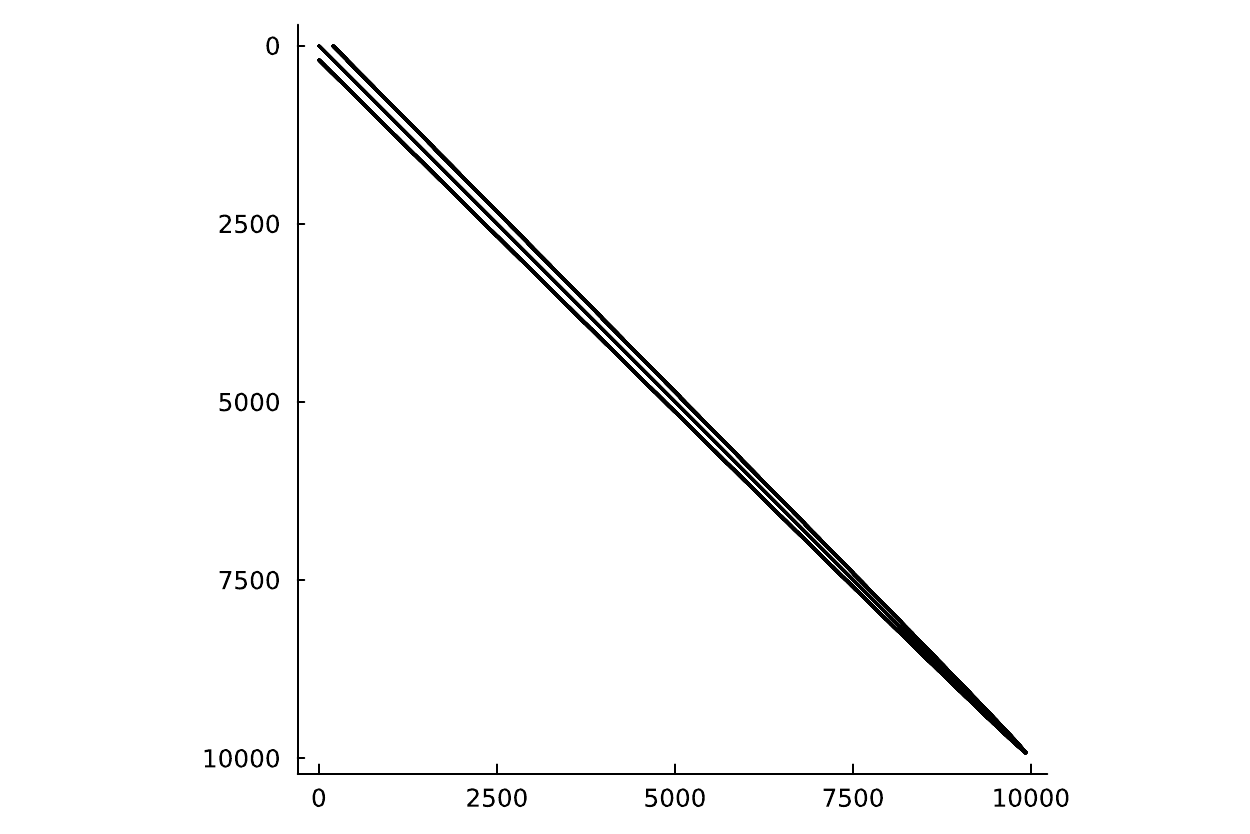}
\caption{ Plots of the right-hand side (left) and the solution (middle) in the non-separable problem of \cref{sec:examples:non-separable} with the right-hand side and rotationally anisotropic Helmholtz coefficient as given in \cref{eq:non-separable}. We mesh the domain into two cells meeting at $r=1/2$ which corresponds with the radial discontinuity of the right-hand side. As $x$ grows from $-1$ to $1$, we notice a change in the behaviour of the solution corresponding to the transition of a solution to the screened Poisson equation to one of a Helmholtz problem. On the right, we provide a spy plot of the FEM discretization matrix of \cref{eq:non-separable} where $N_p=100$ on both cells.}
\label{fig:non-separable}
\end{figure}

\subsection{Singular source term}
\label{sec:examples:hp-refinement}

In this example we consider the unit disk domain $\Omega_0 = \{ r < 1\}$ and seek $u \in H^1_0(\Omega_0)$ that satisfies
\begin{align}
-\Delta u = r^{-3/2}.
\label{eq:hp}
\end{align}
The exact solution to \cref{eq:hp} is $u(x,y) = 4-4(x^2+y^2)^{1/4} =4-4r^{1/2}$. This problem features a non-integrable singularity at $r=0$ in the source term as well as a square-root singularity in the solution. These singularity will impede the convergence of a $p$-refinement strategy. Nevertheless, we will recover spectral convergence to the solution by constructing a mesh that is graded towards the origin \cite[Ch.~3]{Schwab1998}.

In \cref{fig:hp} we consider three discretization strategies. One involves fixing a single-celled mesh (the unit disk) and solely increasing the truncation degree $N_p$ (pure $p$-refinement). Due to the singularity, $p$-refinement is ineffective. Consequentially, even when $N_p = 1004$ (corresponding to 502 degrees of freedom in the first Fourier mode coefficient) we still observe an $\ell^\infty$-norm error of 0.32. The other two strategies involve constructing a mesh of the unit disk domain where $\mathcal{T}_N = \{ 0 \leq r \leq 2^{-2N} \} \cup \{ 2^{-n} \leq r \leq 2^{-(n-1)}\}_{n \in \{1,2,3,\dots,2N\}}$. This results in a mesh with $N_h = 2N+1$ cells which are graded towards the origin. Then, for each mesh $\mathcal{T}_N$ we consider either an FEM discretization with degree $N_p = N$ on each cell (graded mesh, $p$-refinement) or fix $N_p=38$ across all the meshes (graded mesh, $N_p=38$). The latter strategy recovers spectral convergence to the exact solution with respect to number of degrees of freedom in the solution coefficient vector of the first Fourier mode $(m,j)=(0,1)$. Note that due to the rotational symmetry of the solution, the solution coefficients corresponding to other Fourier modes are equal zero. The former strategy achieves a smaller error per degree of freedom. However the convergence is not spectral. In \cref{fig:hp} we also provide a one-dimensional slice of the approximate solution when $N=N_p=38$ at $\theta=0$. The vertical dashed lines indicate the edges of the annular cells in the mesh.

\begin{figure}[h!]
\centering
\includegraphics[width =0.48 \textwidth]{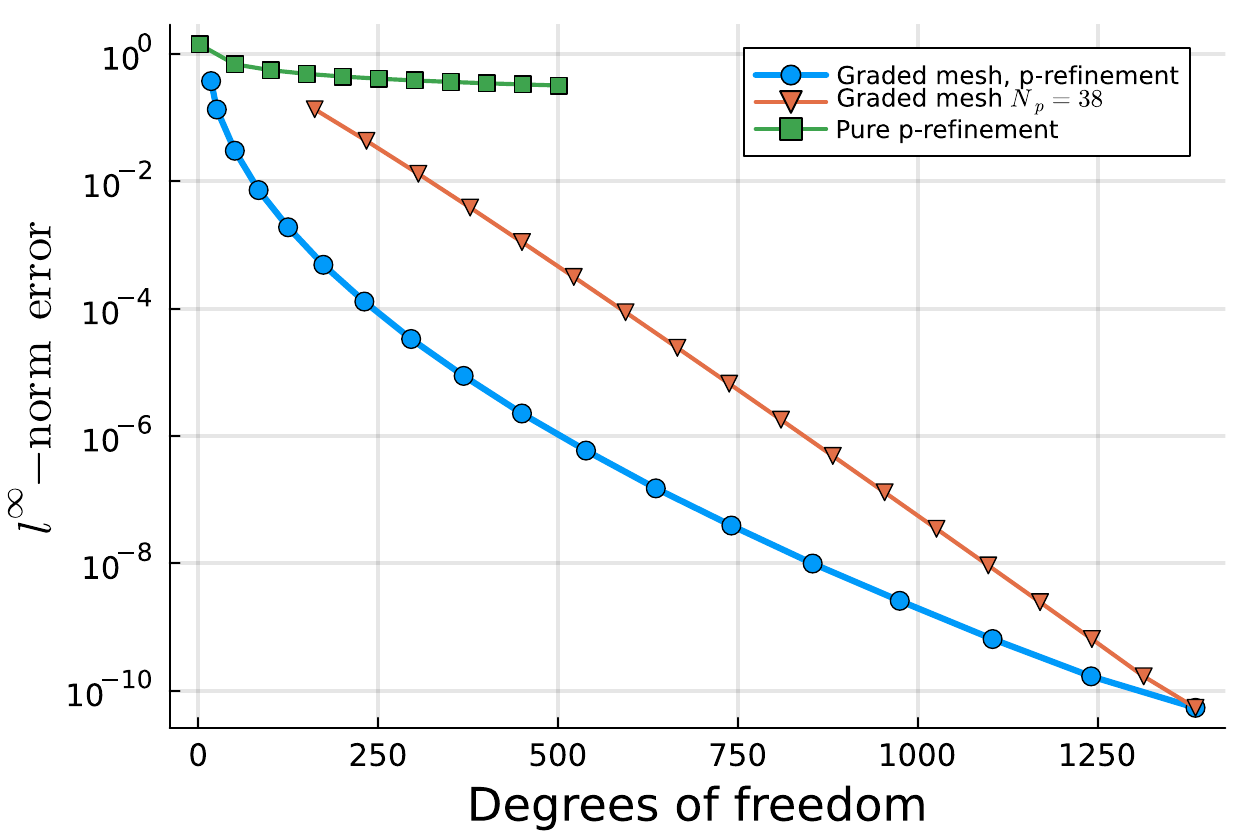}
\includegraphics[width =0.48 \textwidth]{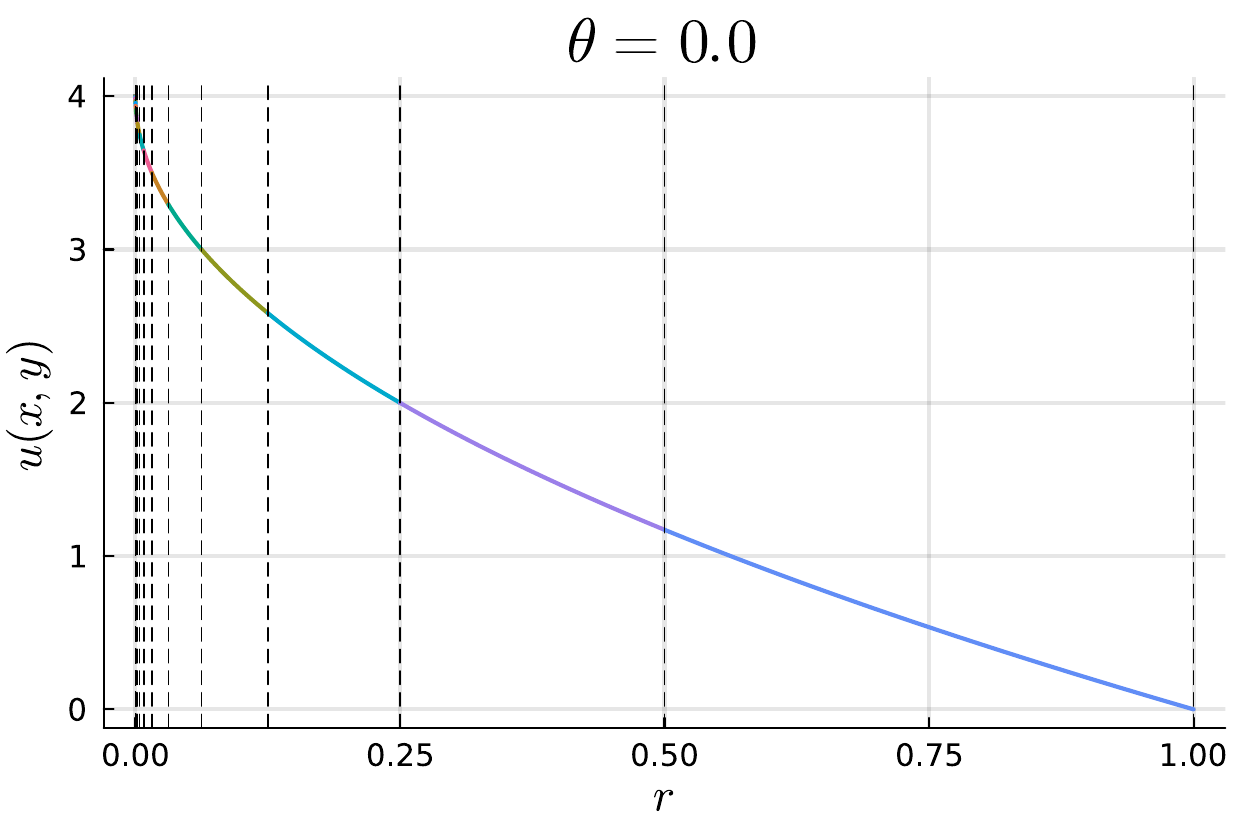}
\caption{ (Left) Convergence to the solution $u$ of \cref{eq:hp} via a pure $p$-refinement on a single-celled mesh and via a graded mesh with a fixed truncation degree $N_p=38$ (graded mesh, $N_p=38$) or where $N_p=N$ (graded mesh, $p$-refinement). The errors are measured against the number of degrees of freedom in the solution coefficient vector of the first Fourier mode $(m,j)=(0,1)$. The convergence of the second strategy is spectral. (Right) A one-dimensional slice of the approximate solution at $N=38$, i.e.~where $N_h = 77$ and $N_p=38$. The dashed vertical lines indicate the edges of the annular cells which are graded towards the origin.}
\label{fig:hp}
\end{figure}

\subsection{Screened Poisson in a 3D cylinder}
\label{sec:examples:ADI}
In this example we solve the screened Poisson on a 3D cylinder with a quasi-optimal complexity setup and solve of $\mathcal{O}(N_h N_p^3 \log^2 N_p)$ and $\mathcal{O}(N_h N_p^3 \log^2 N_h^{1/4} N_p)$, respectively. We utilize the hierarchical tensor-product FEM basis designed in \cref{sec:ADI}. Recall that in 3D we use $N_p$ to denote the truncation degree of each polynomial basis factor of the tensor-product space, such that the tensor-product basis contains polynomials of maximum degree $2N_p$. $N_h$ denotes the number of three-dimensional cells in the mesh.

Let ${\bm \Omega}= \Omega_0 \times (-1,1) \subset \mathbb{R}^3$ be a cylindrical domain. Consider the screened Poisson equation, find $u \in H^1_0({\bm \Omega})$ that satisfies \cref{eq:helmholtz} and choose the Helmholtz coefficient and the right-hand side:
\begin{align}
\lambda(r)
=
\begin{cases}
10^{-2} & \text{if} \;\; 0 \leq r \leq 1/2,\\
50 & \text{if} \;\; 1/2 < r \leq 1,
\end{cases}
\;\; \text{and} \;\;
f(x,y,z)
=
(-\Delta + \lambda(r)) u_e(x,y,z),
\label{eq:adi:2}
\end{align}
where $u_e(x,y,z) = \cos(5x) \tilde{u}(r) \cos(5z) (1-z^6)$. We pick $\tilde{u}(r)$ as defined in \cref{eq:tildeu} with $\lambda_0 = 10^{-2}$, $\lambda_1 = 50$, and $\rho=1/2$. We mesh the domain into four cells: $\{ r \leq 1/2\} \times [-1,0]$, $\{ r \leq 1/2\} \times [0,1]$, $\{ 1/2 \leq r \leq 1\} \times [-1,0]$, and $\{ 1/2 \leq r \leq 1\} \times [0,1]$. In \cref{fig:adi-plots}, we plot the right-hand side and the solution together with slices in $(x,y)$ and $z$ plane. Note the radial discontinuity in the right-hand side. The convergence plot is displayed in \cref{fig:adi-convergence}. We observe spectral convergence of our discretization as we increase $N_p$ in each cell. In \cref{fig:adi-convergence} we also plot the growth of final ADI iteration $\ell_{\text{max}}$, as $N_p \to \infty$, averaged over all the Fourier mode solves \cref{eq:adi:5} in order to solve the full problem \cref{eq:adi:1}. We observe the expected lograthmic growth. 
 
\begin{figure}[h!]
\centering
\includegraphics[width =0.3 \textwidth]{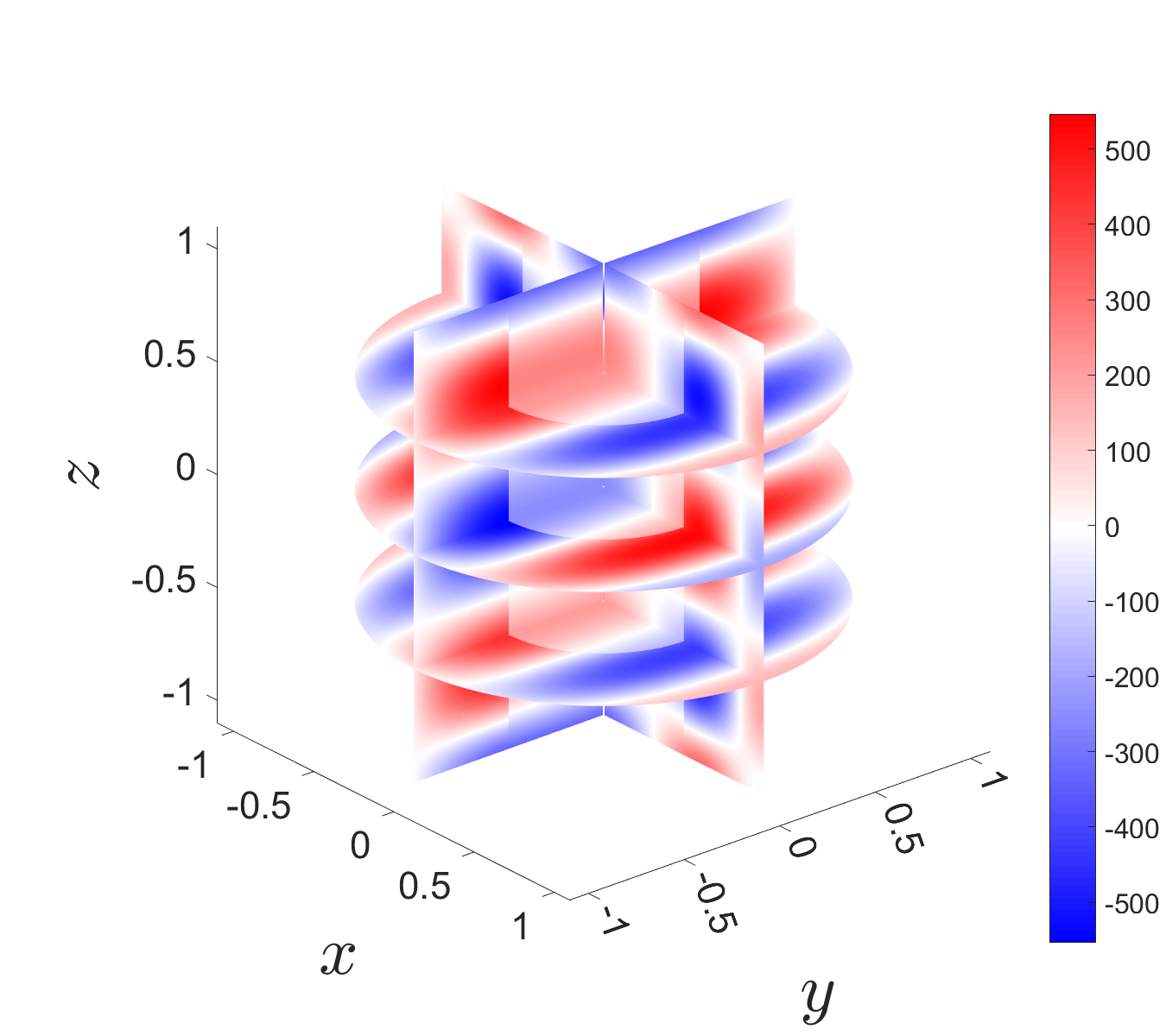}
\includegraphics[width =0.32 \textwidth]{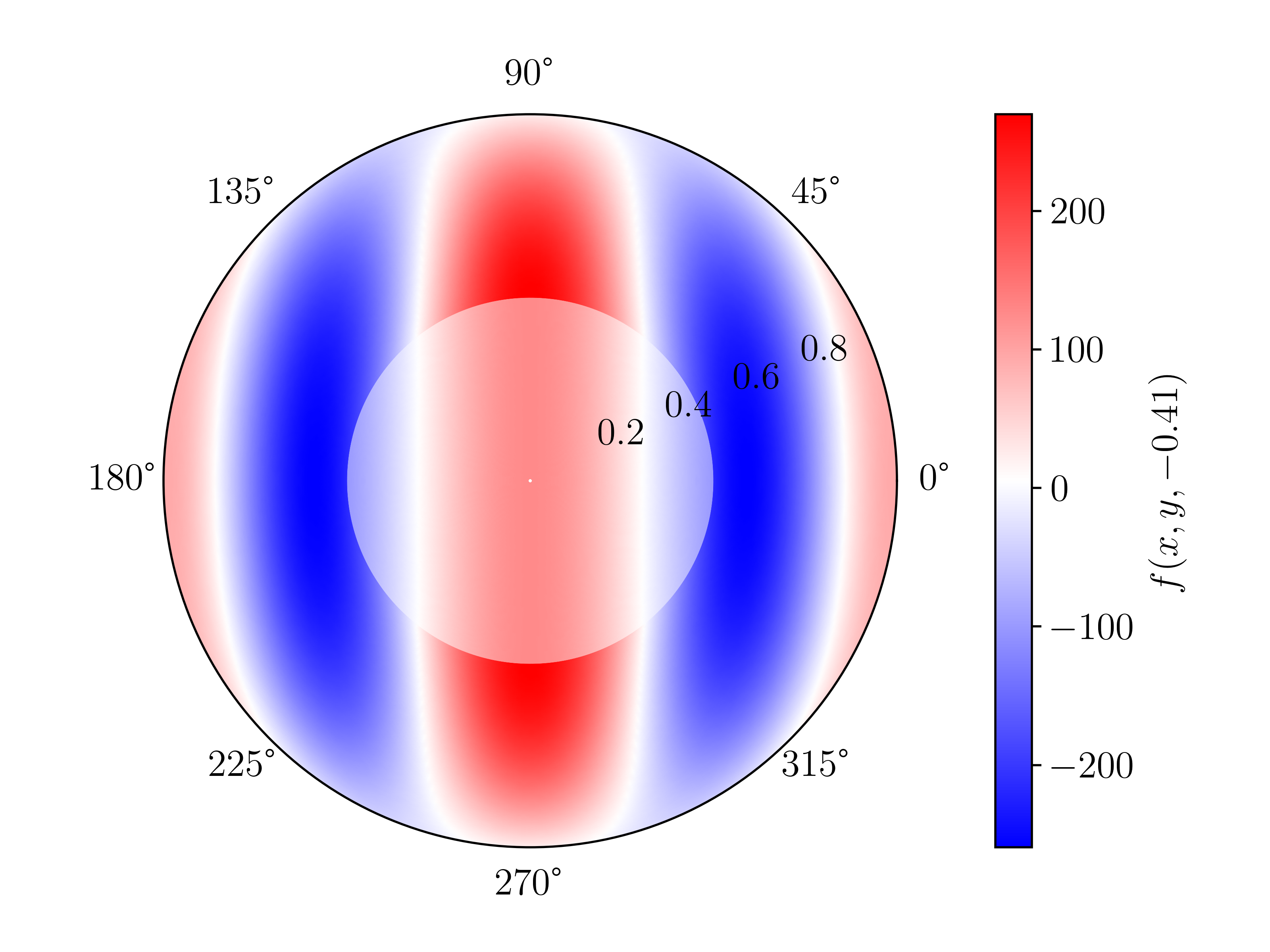}
\includegraphics[width =0.32 \textwidth]{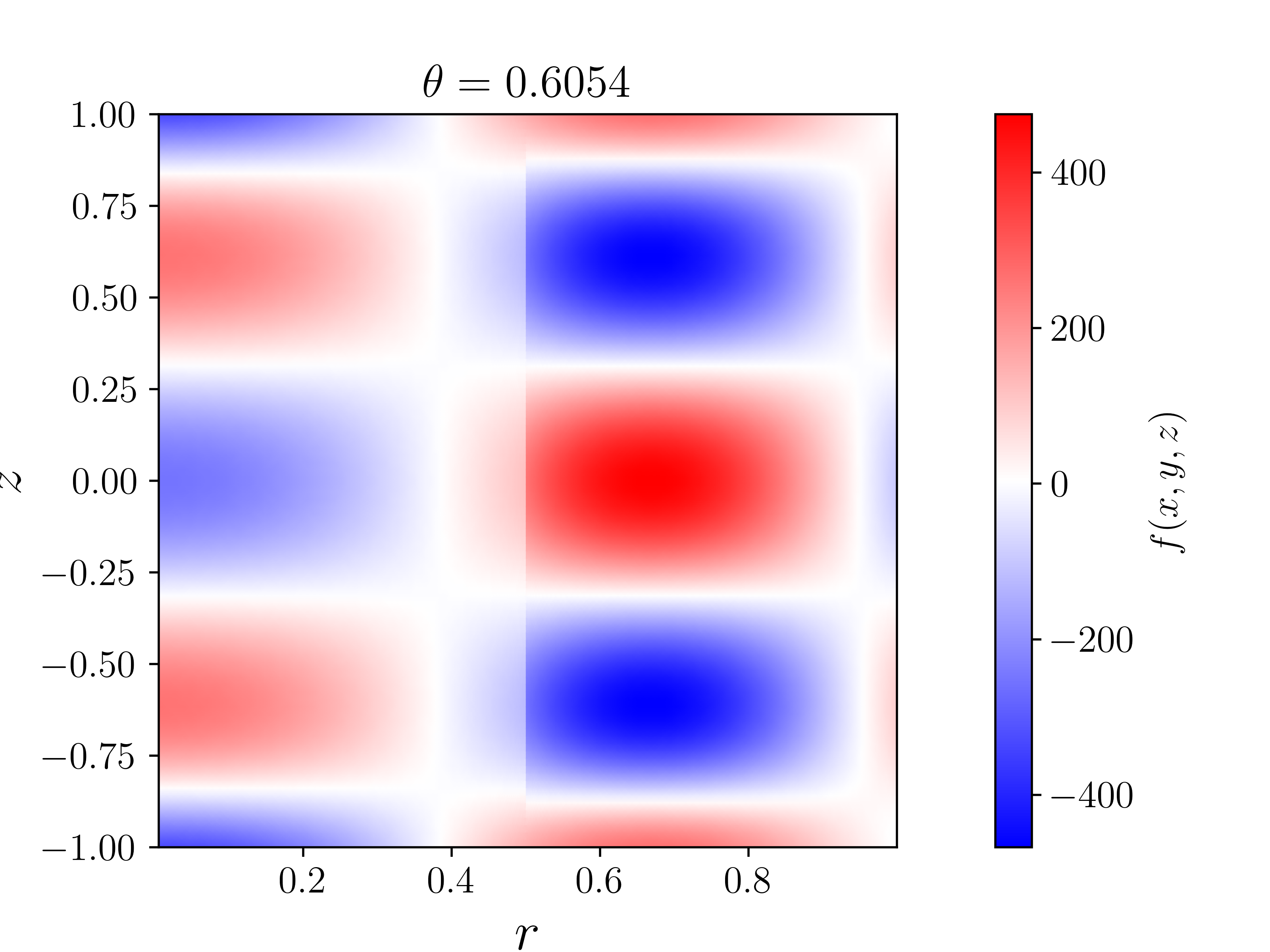}\\
\includegraphics[width =0.3 \textwidth]{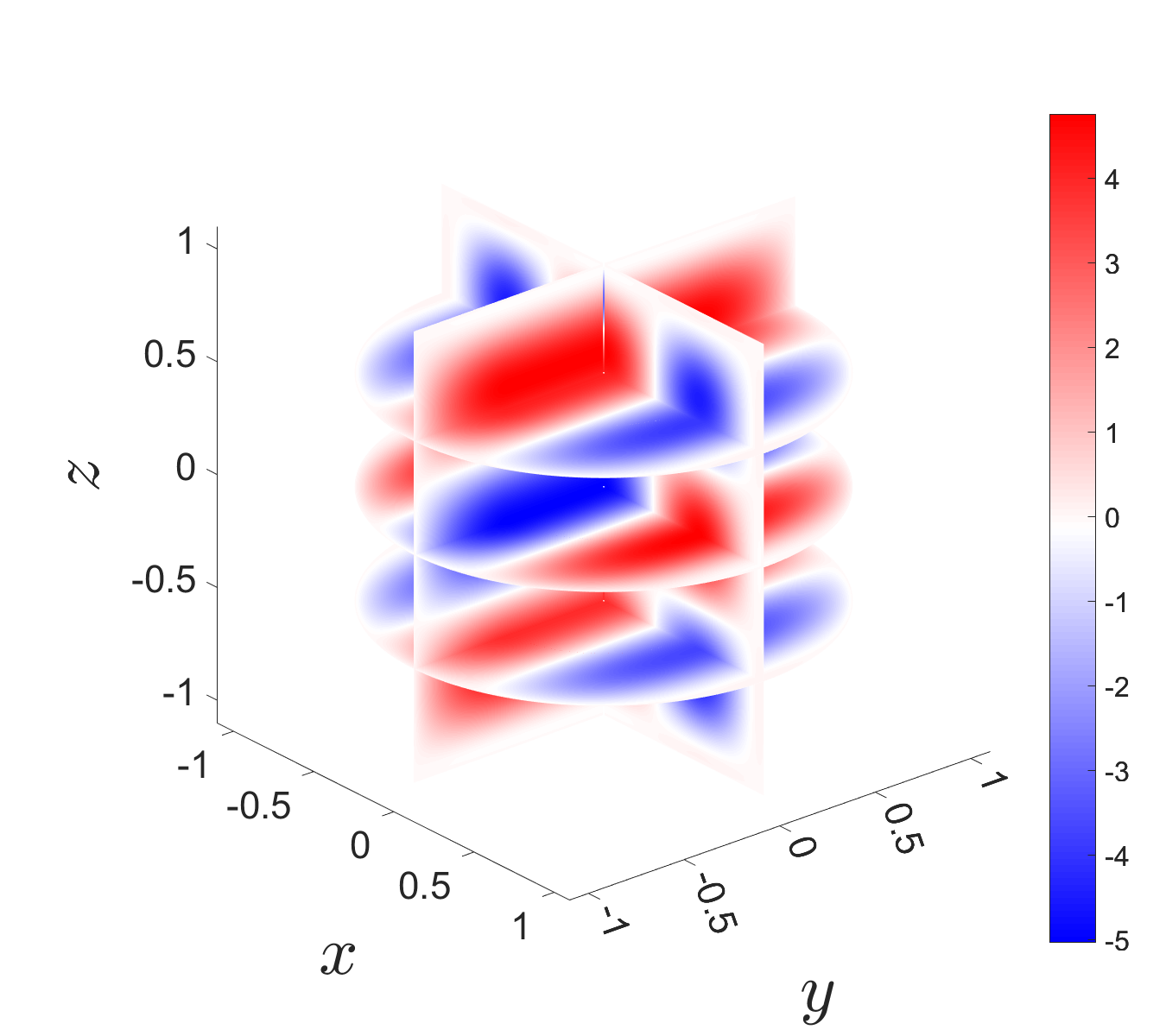}
\includegraphics[width =0.32 \textwidth]{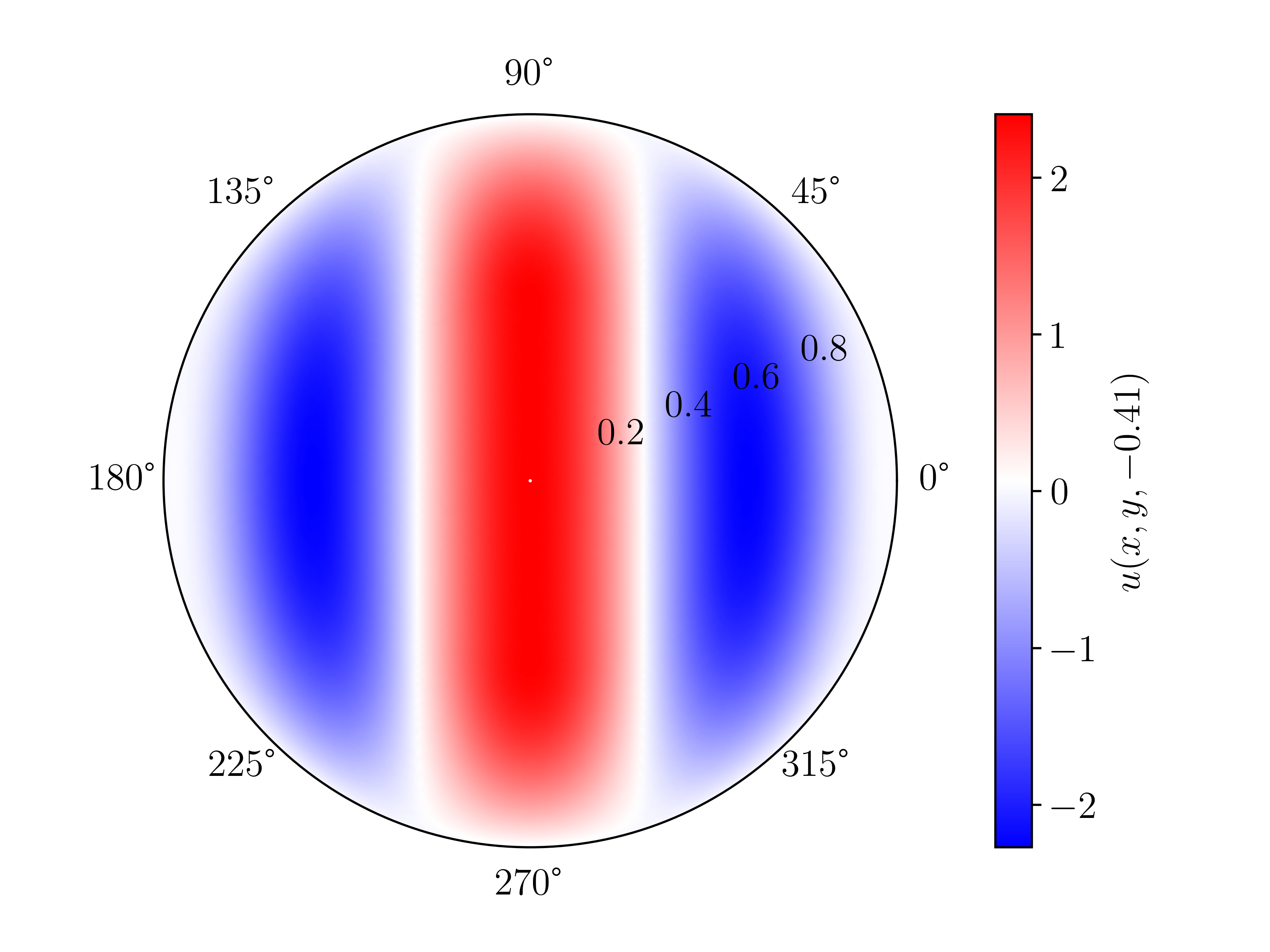}
\includegraphics[width =0.32 \textwidth]{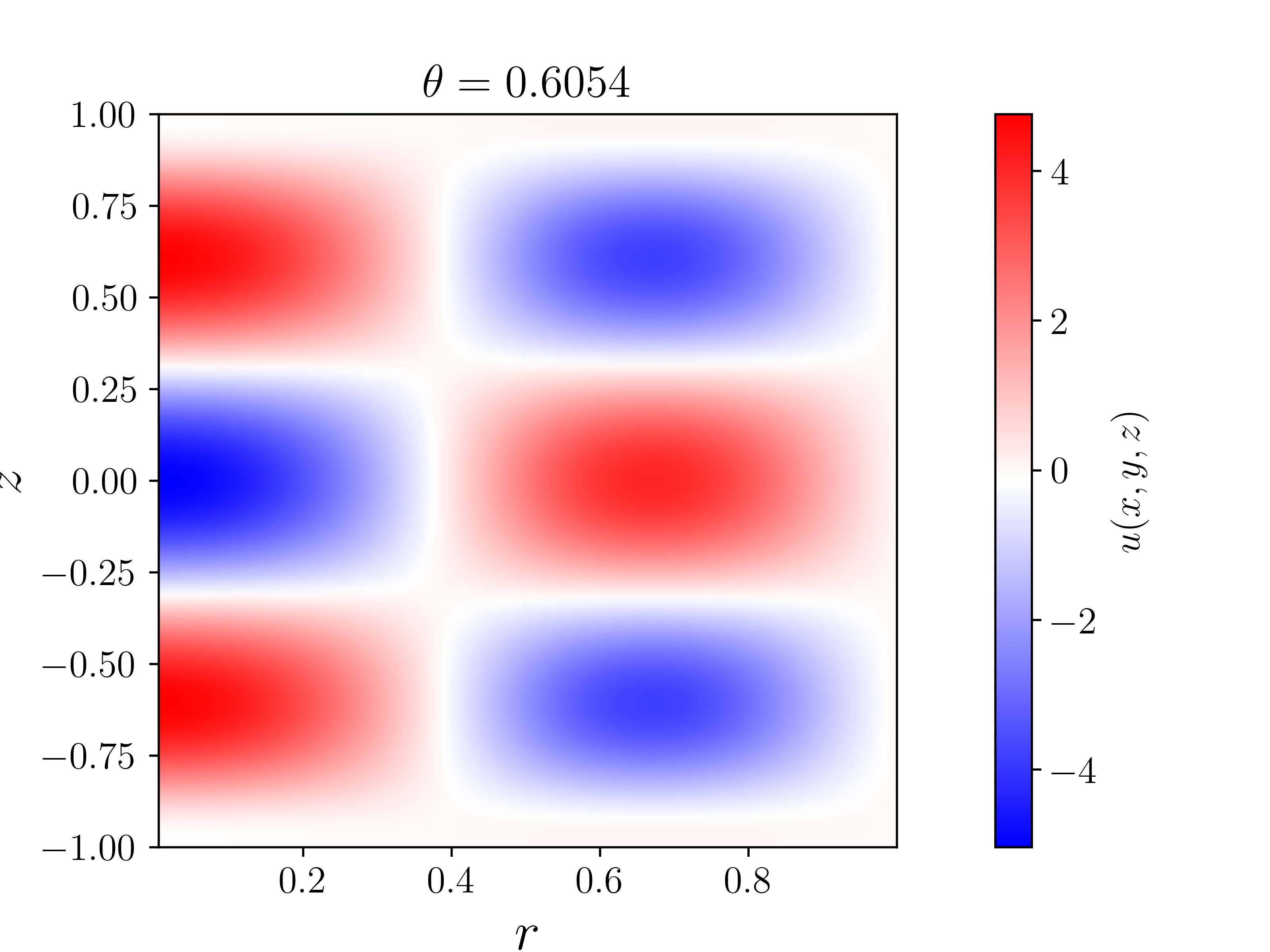}
\caption{Plots of the right-hand side $f(x,y,z)$ (top row) and the solution $u(x,y,z)$ (bottom row) in the 3D cylinder screened Poisson equation of \cref{sec:examples:ADI} with the right-hand side and Helmholtz coefficient as given in \cref{eq:adi:2}. The first column is a visualization on the 3D domain, the second column is a 2D slice in the $(x,y)$-plane and final column is a 2D slice through the $z$-plane.}
\label{fig:adi-plots}
\end{figure}

\begin{figure}[h!]
\centering
\subfloat[$\ell^\infty$-norm error]{\includegraphics[width =0.49 \textwidth]{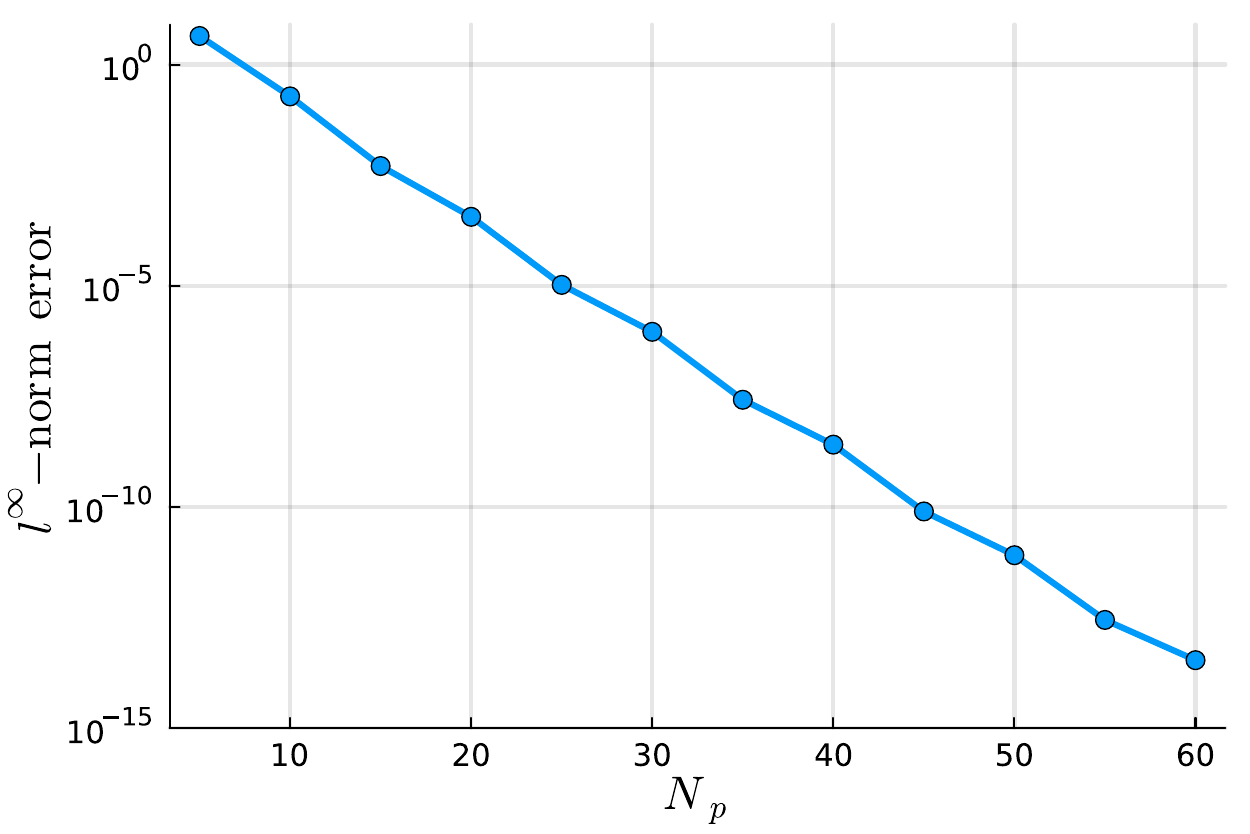}}
\subfloat[Average value of $l_{\text{max}}$]{\includegraphics[width =0.49 \textwidth]{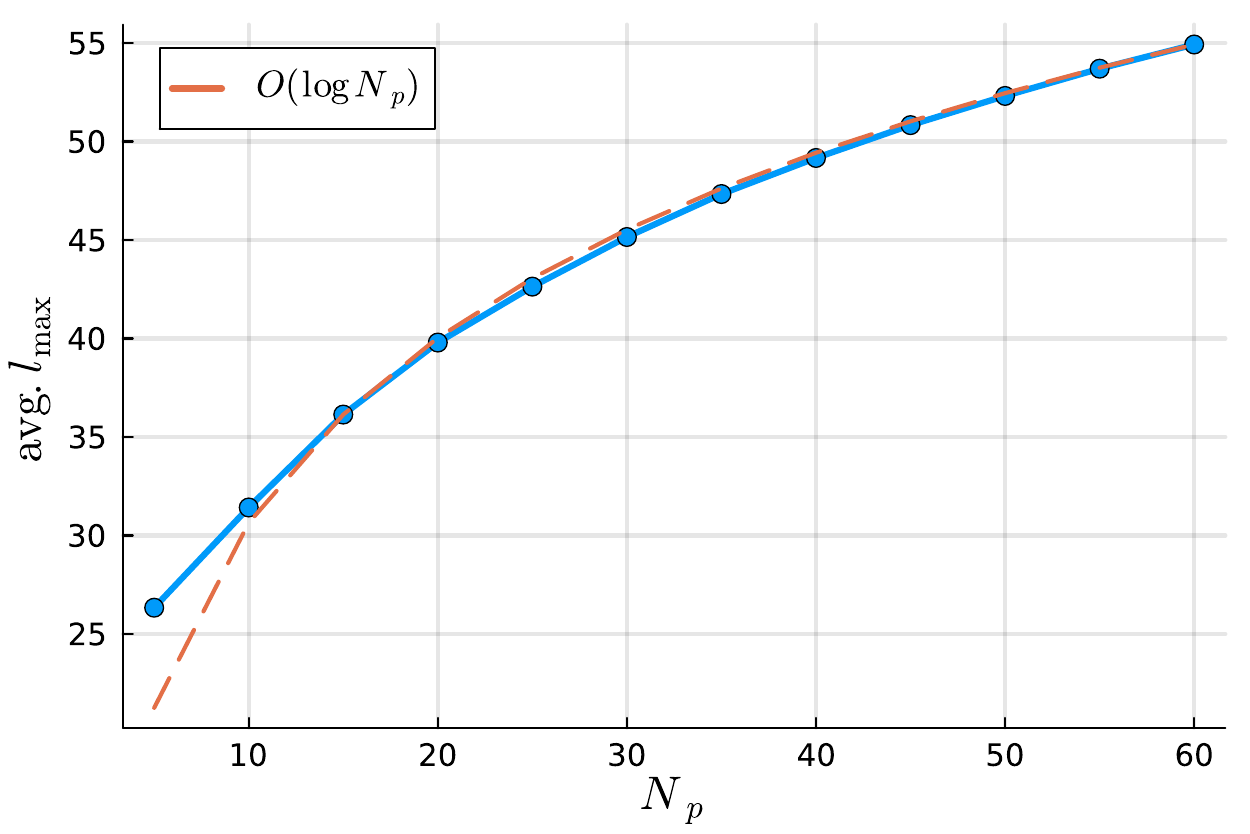}}
\caption{(Left) A semi-log convergence plot of the $\ell^\infty$-norm error of the $hp$-FEM basis for the 3D cylinder screened Poisson problem in \cref{sec:examples:ADI} with a right-hand side and Helmholtz coefficient as given in \cref{eq:adi:2}  with increasing polynomial degree $N_p$ on each of the four cells in the mesh. The plot indicates spectral convergence despite the radial discontinuities in the problem data. (Right) The growth of the final ADI iterate value $\ell_{\text{max}}$ as defined in \cref{th:ADI} averaged across all the Fourier modes solves \cref{eq:adi:5} for increasing $N_p$. We observe logarithmic growth.}
\label{fig:adi-convergence}
\end{figure}

To exemplify the flexibility of the hierarchical basis on the cylinder, in \cref{fig:adi-plots2} we plot the right-hand side and solution to a screened Poisson equation with a radial discontinuous Helmholtz coefficient $\lambda(r^2)$ and a right-hand side $f(x,y,z)$ with discontinuities at $r=1/2$ and $z=0$. Let
\begin{align}
\begin{split}
\lambda(r)
&=
\begin{cases}
1/2 & \text{if} \;\; r \leq 1/2,\\
r^2 & \text{if} \;\; r > 1/2,
\end{cases}
\;\; \text{and} \;\; f(x,y,z) = f_1(x,y)f_2(z) \;\; \text{where}\\
f_1(x,y)
&=
\begin{cases}
2 \cos(20y) & \text{if} \;\; r \leq 1/2,\\
\cos(10x) & \text{if} \;\; r > 1/2,
\end{cases} \;\;
f_2(z)
=
\begin{cases}
2 \cos(20z) & \text{if} \;\;  z \leq 0,\\
\sin(10z) & \text{if} \;\; z \geq 0.
\end{cases} 
\end{split}
\label{eq:adi:6}
\end{align}
We utilize the same discretization as in the previous 3D cylinder example truncation degree $N_p = 60$ and the quasi-optimal complexity solver for 3D cylinders introduced in \cref{sec:ADI}. The discontinuity of the right-hand side and the Helmholtz coefficient is severe. Nevertheless, the hierarchical tensor-product FEM basis accurately approximates the right-hand side and we obtain a solution that is qualitatively accurate.

\section{Conclusions}

In this work we designed a sparse $hp$-finite element method for the disk and annulus domains. The mesh consists of an innermost disk cell and concentric annuli cells. The hierarchical FEM basis contains bubble (internal shape) functions, which are weighted multivariate orthogonal  Zernike (annular)  polynomials whose support is fully contained on one cell, and hat (external shape) functions which are supported on a maximum of two cells. The bubble functions encode the high-order approximation properties of the FEM whereas the hat functions enforce continuity in the solution. The basis induces sparse block-diagonal stiffness and mass matrices where the blocks correspond to the Fourier mode decoupling of the corresponding PDE operators. The discretization retains  symmetry and  sparsity in the induced linear systems for a number of PDEs including the Helmholtz equation with a rotationally invariant and radially discontinuous Helmholtz coefficient. Moreover, the linear systems admit an optimal complexity $\mathcal{O}(N_h N_p^2)$ factorization where $N_h$ is the number of cells in the mesh and $N_p$ is the truncation order on each element.

We considered a number of examples including:
\begin{enumerate}
\itemsep=0pt
\item A high-frequency Helmholtz equation with a radially discontinuous Helmholtz coefficient and right-hand side. 
\item The time-dependent Schr\"odinger equation. The discretization is unitary preserving and, therefore, respects the conservation of energy of the system.   
\item  A rotationally anisotropic coefficient resulting in a non-separable Helmholtz problem. 
\item A singular source term resolved with a graded mesh towards the origin. 
\item The screened Poisson equation on a three-dimensional cylndrical domain with discontinuities in the right-hand side in the radial and $z$-directions. The basis is the tensor product of the FEM basis we developed for the disk and the continuous hierarchical basis for the interval cf.~\cite{Olver2023}, \cite[Ch.~2.5.2]{szabo2011introduction}, and \cite[Ch.~3.1]{Schwab1998}. Using recent results for Zernike (annular) polynomials \cite{Papadopoulos2023, Gutleb2023, Slevinsky2019} we obtain a setup complexity of $\mathcal{O}(N_h N_p^3\log^2 N_p)$. Then via the ADI solver \cite{Olver2023, Fortunato2020}, we derive a solve where we prove the complexity is $\mathcal{O}(N_h N_p^3 \log (N_h^{1/4} N_p))$.
\end{enumerate}
In all examples we observe spectral convergence, potentially after an initial plateau, for increasing truncation degree $N_p$.

\section*{Declarations}
This work was completed with the support of the EPSRC grant EP/T022132/1 ``Spectral element methods for fractional differential equations, with applications in applied analysis and medical imaging" and the Leverhulme Trust Research Project Grant RPG-2019-144 ``Constructive approximation theory on and inside algebraic curves and surfaces". IP was also supported by the Deutsche Forschungsgemeinschaft (DFG, German Research Foundation) under Germany's Excellence Strategy -- The Berlin Mathematics Research Center MATH+ (EXC-2046/1, project ID: 390685689).

\section*{Acknowledgements}
We are grateful to Timon S.~Gutleb for his contributions to the SemiclassicalOrthogonalPolynomials.jl package \cite{SemiPoly.jl2023} which allowed us to build, in optimal complexity, the raising operator matrices used in this work. We are also grateful to Richard M.~Slevinsky for his implementations of the quasi-optimal analysis and synthesis operators for Zernike (annular) polynomials in FastTransforms.jl \cite{FastTransforms.jl2023}. IP would like to express his gratitude to Pranav Singh for the discussions on unitary preserving time-stepping schemes and for providing references that detail the stationary solutions of the Schr\"odinger equation. IP also thanks Daniel Fortunato for the discussions on the ADI algorithm as well as Kars Knook for sharing his Julia implementation of the ADI algorithm.

\printbibliography
\end{document}